\documentclass[12pt]{article}
\usepackage{amsthm}
\usepackage{amsmath,amssymb}
\usepackage{mathtools,hyperref}

\theoremstyle{plain}
\numberwithin{equation}{section}

\newtheorem{theorem}{Theorem}[section]
\newtheorem{corollary}[theorem]{Corollary}
\newtheorem{lemma}[theorem]{Lemma}

\newtheorem{remark}{Remark}[section]

\newtheorem{conjecture}{Conjecture}
\DeclareMathOperator{\diag}{diag}
\DeclareMathOperator{\spec}{spec}
\DeclareMathOperator{\sgn}{sgn}
\newcommand{\editor}{Renata Del-Vecchio}
\newcommand{\ArticleType}{Research Articles} 
\newcommand{\RecievedDate}{Oct 1, 2021}
\newcommand{\RevisedDate}{Jan 7, 2022}
\newcommand{\AcceptedDate}{Jan 7, 2022}
\newcommand{\PublishedDate}{TBA}
\newcommand{\JournalIndex}{Volume 1 (2022), Pages 20--39}
\newcommand{\LastName}{Kannan}
\newcommand{\JournalIndexShort}{\LastName\, et al./ American Journal of Combinatorics 1 (2022) 20--39}
\usepackage{fancyhdr}
\pagestyle{fancy}

\lhead{}
\chead{\emph{\JournalIndexShort \vspace{3pt}}}
\rhead{} 
\lfoot{} \rfoot{} \cfoot{\thepage}

\usepackage{amsmath,amsthm,amssymb,mathtools,url,graphicx,pdfpages,tikz,rotating}
\usepackage[affil-it]{authblk}
\usepackage[left=1in,right=1in,top=1.2in, bottom=1in]{geometry}

\theoremstyle{definition}

\numberwithin{equation}{section}


\begin{document}
\setcounter{page}{1}
\noindent {\color{teal}\bf\large American Journal of Combinatorics} \hfill \ArticleType\\
\JournalIndex

\title{Normalized Laplacians for Gain Graphs}

\author{M. Rajesh Kannan}
\author{Navish Kumar}
\author{Shivaramakrishna Pragada} 
\affil{\normalsize\rm (Communicated by \editor) \vspace*{-24pt}}
\date{}

{\let\newpage\relax\maketitle}

\begin{abstract}
 We propose the notion of normalized Laplacian matrix $\mathcal{L}(\Phi)$ for a gain graph $\Phi$ and study its properties in detail, providing insights and counterexamples along the way. We establish bounds for the eigenvalues of $\mathcal{L}(\Phi)$ and characterize the classes of graphs for which equality holds. The relationships between the balancedness, bipartiteness, and their connection to the spectrum of $\mathcal{L}(\Phi)$ are also studied. Besides, we extend the edge version of eigenvalue interlacing for the gain graphs.  Thereupon, we determine the coefficients for the characteristic polynomial of  $\mathcal{L}(\Phi)$.
\end{abstract}

\renewcommand{\thefootnote}{\fnsymbol{footnote}} 
\footnotetext{\hspace*{-22pt} MSC2020: 05C22, 05C50, 94A12;
Keywords: Gain normalized Laplacian, balancedness, bipartite graphs, Perron-Frobenius Theorem.\\
Received \RecievedDate; Revised \RevisedDate; Accepted \AcceptedDate; Published \PublishedDate \\
\copyright\, The author(s). Released under the CC BY 4.0 International License}
\renewcommand{\thefootnote}{\arabic{footnote}}

\section{\textsf{Introduction}}Spectral graph theory is the study of the properties of a graph related to the characteristic polynomial, eigenvalues, and eigenvectors of matrices associated with the graph, such as the adjacency matrix,  Laplacian matrix, and so on \cite{bapat2010graphs,brouwer2011spectra,chung1996lectures,intro_graph_spectra_2009,cvetkovic2007signless}. Several structural properties of graphs are deduced from the eigenvalues of these matrices.  For example, the number of edges (via the adjacency, Laplacian, and signless Laplacian), the number of connected components (via the Laplacian and normalized Laplacian), bipartiteness (via the adjacency and normalized Laplacian), and the number of bipartite components (via the signless Laplacian and normalized Laplacian), etc.

The normalized Laplacian matrices for both undirected and directed graphs are well-studied matrix classes in spectral graph theory. The eigenvalues and eigenvectors of the normalized Laplacian matrices reveal several combinatorial properties of the underlying graphs. In particular, the second smallest eigenvalue of the normalized Laplacian is useful in studying the mixing rate of random walks, expansion of a graph, Cheeger constant, etc.  For more details, we refer to the monograph by Chung \cite{chung1996lectures}. The normalized Laplacian for directed graphs are studied by  Bauer \cite{bauer2012normalized} and Chung \cite{dir-lap-chung}.

A \emph{directed graph (or digraph)} $ X $ is an ordered pair $ (V(X), E(X)) $, where\break $ V(X)=\{ v_{1}, v_{2}, \dots,v_{n}\} $ is the vertex set and $ E(X) $ is the directed edge set. A directed edge from the vertex $ v_{s} $ to the vertex $ v_{t} $ is denoted by $ \overrightarrow{e_{st}} $. If $ \overrightarrow{e_{st}} \in E(X)$ and $  \overrightarrow{e_{ts}}\in E(X)$, then the pair $ \{v_{s},v_{t}\} $ is called a \emph{digon} of $ X $. The  underlying graph of $ X $ is a simple undirected graph obtained from $ X $ by replacing a directed edge by an undirected edge and it is denoted by $ \Gamma(X) $.  The  \textit{Hermitian adjacency matrix}  of a digraph $ X $ is  denoted by $H(X)$ and is defined as follows:
$$\mbox{$ (s,t)$-{th} entry of }H(X)=h_{st}=\begin{cases}
	1& \text{if  } \mbox{$\overrightarrow{e_{st}}\in E(X) $ \text{and} $\overrightarrow{e_{ts}} \in E(X)$},\\
	i& \text{if  } \mbox{$\overrightarrow{e_{st}} \in E(X)$ \text{and} $\overrightarrow{e_{ts}} \notin E(X)$},\\
	-i& \text{if  } \mbox{$\overrightarrow{e_{st}} \notin E(X)$ \text{and} $\overrightarrow{e_{ts}} \in E(X)$},\\
	0&\text{otherwise.}\end{cases}$$ This was introduced by Guo and Mohar \cite{GuoMohar} and Liu and Li
\cite{liu2015hermitian}.  In a very recent paper \cite{hermitian}, Yu et al.  studied  the notion of Hermitian normalized Laplacian matrix.

For a given group $\mathfrak{G}$, a $\mathfrak{G}$-gain graph is a graph $G$ with each orientation of an edge of $G$ is assigned an element $g \in \mathfrak{G}$ (called a gain of the oriented edge)  and whose inverse $g^{-1}$ is assigned to the opposite orientation of the edge. The notion of the $\mathfrak{G}$-gain graph was introduced by Zaslavsky \cite{Zas1982Signed, zaslavsky1989biased}. Let $\mathbb{T}=\{z\in\mathbb{C}:|z|=1\} $ be the multiplicative group of unit complex numbers. If $\mathfrak{G} = \mathbb{T}$,  we call $G$  as a $\mathbb{T}$-gain graph(or a gain graph). Note that the Hermitian adjacency matrix can be considered as the adjacency matrix of a $\mathbb{T}$-gain graph where the gains are from $ \{1, \pm i\}$.
In 2012, Reff introduced the notion of the adjacency matrix and Laplacian matrix of a gain graph canonically \cite{reff2012spectral}.  Afterward, Mehatari et al. studied several spectral properties of  gain adjacency matrices \cite{ran-raj-ani-LAMA}.

In this article, we define the notion of gain normalized Laplacian matrix for a  gain graph. We aim to study some of the basic properties of gain normalized Laplacian matrix, and to establish the connections between its eigenvalues and the structural properties of the underlying graph. Many results from the papers mentioned above have been extended here in the context of gain normalized Laplacian matrix and a complete proof of many new results, along with counter examples for results that do not follow, have been provided. We start by defining the gain normalized Laplacian $\mathcal{L}(\Phi)$, analogous to the Hermitian Laplacian defined in \cite{hermitian}. Then, we study the properties of the spectrum of $\mathcal{L}(\Phi)$ and characterize its eigenvalues to establish a relation between structural properties of the underlying graph $G$. We then obtain bounds for eigenvalues of $\mathcal{L}(\Phi)$, and characterize, in terms of both structure of the graph and gains of the edges,  the classes of graphs for which the inequality is sharp. Thereupon, we study the relationship between the balancedness,  bipartiteness, and spectral radius of the normalized Laplacian associated with a graph. On top of that, we investigate the symmetry of the eigenvalues of $\mathcal{L}(\Phi)$, and provide an edge-version of the eigenvalue interlacing result. We finish our theoretical exposition by presenting two expressions for the coefficients of the characteristic polynomial of $\mathcal{L}(\Phi)$.

This article is organized as follows:  In Section \ref{prelim}, we include some of the  known results which are useful for this work. In Section \ref{sec:norm-gain-lap}, we start by defining the gain normalized Laplacian matrix for a gain graph and present some of its basic properties with a significant focus on spectral and balancedness related properties. In Section \ref{sec:bala-bipa-eig}, an equivalent condition for the equality of set of eigenvalues of $\mathcal{L}(\Phi)$ and their connections with the structure of the underlying graph is provided. Next, we provide an edge version of the eigenvalue interlacing result in section \ref{sec:eig-inter}. In Section \ref{sec:char-poly}, the coefficients of the characteristic polynomial of $\mathcal{L}(\Phi)$ are determined.

\section{\textsf{Preliminaries}}\label{prelim}
Let $G = (V,E)$ be a simple, undirected, finite graph with the vertex set $V(G) = \{v_1,\dots,v_n\}$ and the edge set $E(G) \subseteq V \times V$. If two vertices $v_i$ and $v_j$  are adjacent, we write $v_i \sim v_j$, and the edge between them is denoted by $e_{ij}$, i.e., $e_{ij} = (v_i,v_j) \in E(G)$. The degree of the vertex $v_j$ is denoted by $d_j$. The
$(0, 1)$-adjacency matrix or simply the adjacency matrix of $G$ is an $n \times n$ matrix, denoted by $\mathbf{A}(G) = (a_{ij}) \in \mathbb{R}^{n \times n}$, whose rows and columns are indexed by the vertex set of the graph and the entries are defined by

\begin{equation}
	a_{ij} = \begin{cases} 1 & \quad \text{if} \quad v_i \sim v_j \\ 0 & \quad \text{otherwise.} \end{cases}
\end{equation}

We define a diagonal matrix $\mathbf{D}(G) = \diag(d_1, d_2, \dots, d_n )$, where $d_i$ is the degree of vertex $v_i$ in the underlying graph $G$ and the normalized adjacency matrix is defined as $\mathcal{A}(G) = \mathbf{D}^{-\frac{1}{2}} \mathbf{A}(G) \mathbf{D}^{-\frac{1}{2}}$. The (combinatorial) Laplacian matrix of a graph $G$ is defined as $\mathbf{L}(G) = \mathbf{D}(G)-\mathbf{A}(G)$.  The normalized Laplacian of a graph $G$, without isolated vetrices,  is defined as $  \mathcal{L}(G)  = \mathbf{D}^{-\frac{1}{2}}(G)\: \mathbf{L}(G)  \mathbf{D}^{-\frac{1}{2}}(G) = \mathbf{I} - \mathbf{D}^{-\frac{1}{2}}(G) \mathbf{A}(G) \mathbf{D}^{-\frac{1}{2}}(G)$. It is clear that $\mathcal{L}(G)$ is symmetric positive semi-definite. For further theory and applications related to the graph Laplacians, we refer to \cite{cavers2010normalized,chung1996lectures}.

For any simple graph $ G $, each undirected edge $ e_{st} \in E(G) $ is associated with a  pair of oriented edges, namely $ \overrightarrow{e_{st}} $ and $ \overrightarrow{e_{ts}} $. The set of all such oriented edges of a simple graph $ G $ is known as \textit{the oriented edge set} of $ G $, and is denoted by $ \overrightarrow{E}(G) $. A \textit{ $\mathbb{T}$-gain graph (or a gain graph)} on a simple graph $ G $ is a triplet $\Phi=(G,\mathbb{T},\varphi)$ such that  the map (\emph{the gain function}) $\varphi:\overrightarrow{E}(G)\rightarrow\mathbb{T}$  satisfies $\varphi(\overrightarrow{e_{st}})=\varphi(\overrightarrow{e_{ts}})^{-1}$. That is, for an oriented edge $\overrightarrow{e_{st}}$, if we assign a value $g$ (the \textit{gain} of the edge $ \overrightarrow{e_{st}}$) from $\mathbb{T}$, then assign $g^{-1 }$ to the oriented edge $\overrightarrow{e_{ts}}$. For simplicity, we use $\Phi=(G,\varphi)$ to denote a $\mathbb{T}$-gain graph instead of $\Phi=(G,\mathbb{T},\varphi)$, and call $ \varphi $  a $ \mathbb{T}$-gain on $ G $ if $ \Phi=(G, \varphi) $ is a $ \mathbb{T} $-gain graph on $ G $. In \cite{reff2012spectral},  Reff studied the notion of the adjacency matrix $\textbf{A}(\Phi)=(a_{st})_{n\times n}$ of a $\mathbb{T}$-gain graph $\Phi$. The entries of $\textbf{A}(\Phi)$ are given by
$$a_{st}=\begin{cases}
	\varphi(\overrightarrow{e_{st}})&\text{if } \mbox{$v_s\sim v_t$},\\
	0&\text{otherwise.}\end{cases}$$
It is clear that the matrix $\textbf{A}(\Phi)$ is Hermitian, and hence its eigenvalues are real. When $\varphi(\overrightarrow{e_{st}})=1$ for all $\overrightarrow{e_{st}}$, then $\textbf{A}(\Phi)=\textbf{A}(G)$. Thus we can consider $G$ as a $\mathbb{T}$-gain graph and we write this by $(G,1)$. By slight abuse of notation, we sometimes write $\varphi(e_{ij})$ as only $\varphi_{ij}$. The Laplacian matrix $\mathbf{L}(\Phi)$ is defined as $\mathbf{L}(\Phi) = \mathbf{D}(G)-\mathbf{A}(\Phi)$, where $\mathbf{D}(G) = \diag(d_1, d_2, \dots, d_n )$ is a diagonal matrix and $d_i$ is the degree of vertex $v_i$ in the underlying graph $G$. It is known that $\mathbf{L}(\Phi)$ is Hermitian and positive semi-definite.

The gain of a cycle (with some orientation) $C = v_1 v_2 \ldots v_l v_1$, denoted by $\varphi(C)$, is defined as the product of the gains of its edges, that is $$\varphi(C) = \varphi(e_{12}) \varphi(e_{23}) \cdots \varphi(e_{(l-1)l}) \varphi(e_{l1}).$$
A cycle $C$ is said to be neutral if $\varphi(C) = 1$, and a gain graph is said to be balanced if all its cycles are neutral. For a cycle $C$ of $G$, we denote the real part of the gain of $C$ by $\Re(\varphi(C))$, and it is independent of the orientation.
A function from the vertex set of $G$ to the complex unit circle $\mathbb{T}$ is called a switching function. Two gain graphs $\Phi_1 = (G, \varphi_1)$ and $\Phi_2 = (G, \varphi_2)$ are switching equivalent, written as $\Phi_1 \sim \Phi_2$, if there is a switching function $\zeta: V \to \mathbb{T}$ such that $$\varphi_2(e_{ij})=\zeta(v_i)^{-1}\varphi_2(e_{ij})\zeta(v_j).$$

The switching equivalence of two gain graphs can be defined in the following equivalent way: Two gain graphs $\Phi_1 = (G, \varphi_1)$ and $\Phi_2 = (G, \varphi_2)$ are switching equivalent, if there exists
a diagonal matrix $\mathbf{D}_{\zeta}$ with diagonal entries from $\mathbb{T}$, such that
\begin{align}\label{eq: switching equi}
	\mathbf{A}(\Phi_2) = \mathbf{D}_{\zeta}^{-1}\mathbf{A}(\Phi_1)\mathbf{D}_{\zeta}.
\end{align}
Switching equivalence preserves connectivity and balancedness.
We write $\Phi\sim(G,1)$, if $\Phi$ is switching equivalent to the gain which assigns $1$ to all the edges of  $G$.
\begin{theorem}[\cite{zaslavsky1989biased}]\label{balan-equi-switch}
	Let $\Phi=(G,\varphi)$ be a $\mathbb{T}$-gain graph. Then          $\Phi$ is balanced if and only if $\Phi\sim(G,1)$,
\end{theorem}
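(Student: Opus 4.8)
The plan is to prove the two implications separately; the reverse direction is essentially immediate, while the forward direction carries all the work. For the implication $\Phi\sim(G,1)\Rightarrow\Phi$ balanced, observe that $(G,1)$ is trivially balanced, since every cycle gain is a product of $1$'s, and switching equivalence preserves balancedness (as noted just before the theorem). Hence any $\Phi$ switching equivalent to $(G,1)$ is balanced as well.

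For the converse, assume $\Phi$ is balanced and construct an explicit switching function $\zeta\colon V\to\mathbb{T}$ realizing $\Phi\sim(G,1)$. First I would reduce to the case where $G$ is connected, since a switching function can be built independently on each connected component. Fix a spanning tree $T$ of $G$ rooted at a vertex $r$, and define $\zeta$ by setting $\zeta(r)=1$ and propagating along $T$: for each tree edge oriented from parent $u$ to child $v$, set $\zeta(v)=\zeta(u)\,\varphi(\overrightarrow{e_{uv}})^{-1}$. Recall that $\Phi\sim(G,1)$ amounts to the entrywise requirement $\varphi(e_{ij})=\zeta(v_i)\zeta(v_j)^{-1}$ for every edge. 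By construction this identity holds on every tree edge, and an easy induction along $T$ yields the closed form $\zeta(v)=\varphi(P_{rv})^{-1}$, where $P_{rv}$ denotes the unique $T$-path from $r$ to $v$ and $\varphi(P_{rv})$ the product of the gains along it.

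The crux is verifying the same identity on the non-tree edges, and this is the step I expect to be the main (though still routine) obstacle. Given a non-tree edge $e_{ij}$, I would consider the fundamental cycle $C$ formed by $e_{ij}$ together with the $T$-path between $v_i$ and $v_j$. Balancedness forces $\varphi(C)=1$, which rearranges to $\varphi(\overrightarrow{e_{ij}})=\varphi(P_{v_iv_j})$. It then remains to check that $\varphi(P_{v_iv_j})=\zeta(v_i)\zeta(v_j)^{-1}=\varphi(P_{rv_i})^{-1}\varphi(P_{rv_j})$. This requires arguing that concatenating the $v_i$-to-$r$ path with the $r$-to-$v_j$ path and cancelling the shared backtracked edges—each contributing $\varphi(\overrightarrow{e})\varphi(\overrightarrow{e})^{-1}=1$—reduces the walk to the direct $v_i$-to-$v_j$ tree path. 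Once this reduction is in hand, every edge of $G$ switches to gain $1$ under $\zeta$, so the switching function assembled over all connected components witnesses $\Phi\sim(G,1)$, completing the proof.
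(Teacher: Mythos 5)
Your proposal is correct. Note that the paper itself offers no proof of this statement: it is quoted as a known result with a citation to Zaslavsky's \emph{Biased graphs. I. Bias, balance, and gains}, so there is no internal argument to compare against. Your argument --- the easy direction via preservation of cycle gains under switching, and the hard direction via a spanning-tree potential $\zeta(v)=\varphi(P_{rv})^{-1}$ checked on tree edges by induction and on non-tree edges by neutrality of fundamental cycles --- is the standard proof of this equivalence (essentially Zaslavsky's own), and all the steps you flag as needing verification (the closed form for $\zeta$, and the reduction of the walk $v_i \to r \to v_j$ to the tree path $P_{v_iv_j}$ by cancelling backtracked edges at the meet of $v_i$ and $v_j$) go through exactly as you describe, with the abelianness of $\mathbb{T}$ making all the rearrangements legitimate.
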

\begin{theorem}[{\cite[Theorem 4.5]{ran-raj-ani-LAMA}}]\label{theorem:bipa-balan}
	Let $ G $ be a connected graph. Then
	\begin{enumerate}
		\item[(i)] If $ G $ is bipartite, then whenever $ \Phi $ is balanced implies $-\Phi $ is balanced.
		\item[(ii)] If $ \Phi $ is balanced implies $ -\Phi $ is balanced for some gain $ \Phi $, then the graph is bipartite.
	\end{enumerate}
\end{theorem}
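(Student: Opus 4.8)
The plan is to reduce both parts to a single elementary identity describing how cycle gains behave under negation of all edge gains. Write $-\Phi = (G,-\varphi)$ for the gain graph obtained from $\Phi$ by replacing every gain $\varphi(\overrightarrow{e_{st}})$ with $-\varphi(\overrightarrow{e_{st}})$. For a cycle $C = v_1 v_2 \cdots v_\ell v_1$ of length $\ell$, negating all $\ell$ edge gains multiplies the product defining the cycle gain by $(-1)^{\ell}$, so the gain of $C$ in $-\Phi$ equals
\[
(-1)^{\ell}\,\varphi(C).
\]
This identity is the engine of the whole argument; the only other ingredient needed is the classical fact that a connected graph is bipartite if and only if it contains no cycle of odd length.

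For part (i), I would assume $G$ is bipartite and $\Phi$ balanced, and take an arbitrary cycle $C$ of length $\ell$. Bipartiteness forces $\ell$ to be even, so $(-1)^{\ell} = 1$, and the identity above shows that the gain of $C$ in $-\Phi$ is $\varphi(C) = 1$, the last equality holding because $\Phi$ is balanced. Since $C$ was arbitrary, every cycle of $-\Phi$ is neutral and hence $-\Phi$ is balanced. One may also package this through Theorem \ref{balan-equi-switch}: balancedness of $\Phi$ gives $\Phi \sim (G,1)$, hence $-\Phi \sim (G,-1)$, and $(G,-1)$ is balanced exactly when all cycle lengths are even.

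For part (ii), I would argue contrapositively. Suppose $G$ is not bipartite, so it contains a cycle $C_0$ of odd length $\ell_0$, and suppose toward a contradiction that some balanced $\Phi$ has $-\Phi$ balanced as well. Balancedness of $\Phi$ yields $\varphi(C_0) = 1$, while balancedness of $-\Phi$ together with the identity yields $(-1)^{\ell_0}\varphi(C_0) = 1$; as $\ell_0$ is odd this becomes $-\varphi(C_0) = 1$, contradicting $\varphi(C_0) = 1$. Hence no balanced gain on a non-bipartite graph can have a balanced negation, which is exactly the contrapositive of the claim.

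I expect no serious obstacle here. The only points demanding care are tracking the sign $(-1)^{\ell}$ correctly in the cycle-gain product and fixing the reading of the quantifier in (ii) as \emph{there exists a balanced gain $\Phi$ whose negation $-\Phi$ is also balanced} (a purely vacuous reading of the implication would make the hypothesis trivially satisfiable and the statement false, so this is the intended meaning). With that settled, both directions rest entirely on the parity of cycle lengths, mirroring the usual role bipartiteness plays in normalized-Laplacian spectra.
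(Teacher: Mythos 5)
Your proof is correct, but note that the paper contains no proof of this statement to compare against: Theorem~\ref{theorem:bipa-balan} is quoted in the Preliminaries from \cite{ran-raj-ani-LAMA} and used as an imported tool. Your proposal therefore supplies a self-contained argument, and it is the natural elementary one: the identity that the gain of a length-$\ell$ cycle in $-\Phi$ is $(-1)^{\ell}\varphi(C)$, combined with the fact that a graph is bipartite exactly when it has no odd cycle, settles both directions exactly as you write them. Two points in your write-up deserve explicit endorsement. First, your reading of the quantifier in (ii) --- that the hypothesis means \emph{there exists a gain $\Phi$ such that $\Phi$ and $-\Phi$ are both balanced} --- is the right one and is necessary: under the vacuous reading, any unbalanced gain (which every connected non-bipartite graph admits, since it has a cycle) would satisfy the hypothesis and the claim would be false. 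Second, your alternative packaging via Theorem~\ref{balan-equi-switch} is consistent with how the present paper uses the result: if $\zeta$ switches $\Phi$ to $(G,1)$, the same $\zeta$ switches $-\Phi$ to $(G,-1)$, and $(G,-1)$ is balanced precisely when every cycle has even length, which recovers the same parity dichotomy. The only distinction worth recording is that the source paper develops this equivalence inside its spectral framework for gain adjacency matrices, whereas your argument needs nothing beyond cycle gains and parity, so it is strictly more elementary than the machinery surrounding the statement here.
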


For a matrix $\mathbf{A} = (a_{ij}) \in \mathbb{C}^{n \times n}$, define $\vert \textbf{A}\vert = (|a_{ij}|)$. Let $\rho(\mathbf{A})$ and $\spec(\mathbf{A})$ denote the spectral radius and the set of eigenvalues of $\mathbf{A}$, respectively. The following results about nonnegative matrices will be useful throughout the article.

\begin{theorem}[{\cite[Theorem 8.1.8]{horn2012matrix}}]\label{theorem: rho}
	Let $\mathbf{A}, \mathbf{B} \in \mathbb{C}^{n \times n}$ and suppose that $\mathbf{B}$ is nonnegative. If $|\mathbf{A}| \leq \mathbf{B}$, then $\rho(\mathbf{A})\leq \rho(|\mathbf{A}|)\leq \rho(\mathbf{B}).$
\end{theorem}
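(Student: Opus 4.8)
The plan is to reduce the statement to two monotonicity facts about the entrywise order and then transfer them to spectral radii via Gelfand's formula $\rho(\mathbf{M}) = \lim_{m\to\infty}\|\mathbf{M}^m\|^{1/m}$, which holds for every submultiplicative matrix norm $\|\cdot\|$. The norm I would use is the sum norm $\|\mathbf{M}\| = \sum_{i,j}|m_{ij}|$, which is submultiplicative and, crucially, \emph{monotone}: if $|\mathbf{C}| \le \mathbf{D}$ entrywise then $\|\mathbf{C}\| \le \|\mathbf{D}\|$, and moreover $\|\mathbf{C}\| = \||\mathbf{C}|\|$. These two properties are precisely what is needed to push entrywise inequalities to norm inequalities and then, after taking $m$-th roots and limits, to spectral radii.

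First I would establish two elementary entrywise inequalities. The triangle inequality applied to each entry of a product gives $|\mathbf{A}^m| \le |\mathbf{A}|^m$ for every $m \ge 1$, since $\big|(\mathbf{A}^m)_{ij}\big| = \big|\sum a_{ik_1}a_{k_1 k_2}\cdots\big| \le \sum |a_{ik_1}||a_{k_1 k_2}|\cdots = (|\mathbf{A}|^m)_{ij}$. Separately, for nonnegative matrices the entrywise order is preserved under multiplication: if $0 \le \mathbf{P} \le \mathbf{Q}$ and $0 \le \mathbf{R} \le \mathbf{S}$, then $(\mathbf{P}\mathbf{R})_{ij} = \sum_k P_{ik}R_{kj} \le \sum_k Q_{ik}S_{kj} = (\mathbf{Q}\mathbf{S})_{ij}$. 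A routine induction on $m$ then yields $|\mathbf{A}|^m \le \mathbf{B}^m$ from the hypothesis $0 \le |\mathbf{A}| \le \mathbf{B}$.

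With these in hand the conclusion is immediate. For the first inequality, monotonicity of the sum norm gives $\|\mathbf{A}^m\| = \||\mathbf{A}^m|\| \le \||\mathbf{A}|^m\|$; taking $m$-th roots, letting $m \to \infty$, and applying Gelfand's formula to both sides yields $\rho(\mathbf{A}) \le \rho(|\mathbf{A}|)$. For the second inequality, $|\mathbf{A}|^m \le \mathbf{B}^m$ together with monotonicity gives $\||\mathbf{A}|^m\| \le \|\mathbf{B}^m\|$, and the identical limiting argument produces $\rho(|\mathbf{A}|) \le \rho(\mathbf{B})$. Chaining the two gives $\rho(\mathbf{A}) \le \rho(|\mathbf{A}|) \le \rho(\mathbf{B})$.

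The genuinely nontrivial input is Gelfand's spectral-radius formula, which I would quote rather than reprove; everything else is bookkeeping with entrywise absolute values. The one place demanding care is the monotonicity of powers for nonnegative matrices, where all factors must be kept nonnegative throughout the induction so that the termwise comparison $\sum_k P_{ik}R_{kj} \le \sum_k Q_{ik}S_{kj}$ is legitimate; this comparison fails without the nonnegativity of $\mathbf{P}, \mathbf{Q}, \mathbf{R}, \mathbf{S}$, which is exactly why the hypothesis that $\mathbf{B}$ (and hence $|\mathbf{A}|$) is nonnegative is essential.
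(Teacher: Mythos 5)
Your proof is correct. Note, however, that the paper does not prove this statement at all: it is quoted as Theorem 8.1.8 of Horn and Johnson's \emph{Matrix Analysis} and used as a black box, so there is no internal proof to compare against. Your argument --- the entrywise inequalities $|\mathbf{A}^m| \le |\mathbf{A}|^m \le \mathbf{B}^m$ (the first by the triangle inequality, the second by induction using nonnegativity) pushed through a monotone, submultiplicative norm and Gelfand's formula $\rho(\mathbf{M}) = \lim_{m\to\infty}\|\mathbf{M}^m\|^{1/m}$ --- is precisely the standard textbook proof of this result; Horn and Johnson follow the same scheme with a different choice of monotone norm, and your use of the sum norm changes nothing essential.
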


\begin{theorem}[{\cite[Theorem 8.4.5]{horn2012matrix}}]\label{theorem: mat-sim-gain}
	Let $\mathbf{A}, \mathbf{B} \in \mathbb{C}^{n \times n}$. Suppose that $\mathbf{A}$ is nonnegative and irreducible, and $\mathbf{A} \geq |\mathbf{B}|$. Let $\lambda = e^{i\varphi}\rho(\mathbf{B})$ be a given maximum-modulus eigenvalue of $\mathbf{B}$. If $\rho(\mathbf{A})=\rho(\mathbf{B})$, then there is a diagonal unitary matrix $\mathbf{D} \in \mathbb{C}^{n \times n}$ such that $\mathbf{B} = e^{i\varphi} \mathbf{D} \mathbf{A} \mathbf{D}^{-1}$.
\end{theorem}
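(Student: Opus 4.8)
The plan is to exploit Perron--Frobenius structure by comparing the modulus of a maximum-modulus eigenvector of $\mathbf{B}$ against the Perron vector of $\mathbf{A}$. Write $r = \rho(\mathbf{A}) = \rho(\mathbf{B})$ and let $\mathbf{x} \neq 0$ satisfy $\mathbf{B}\mathbf{x} = e^{i\varphi} r\, \mathbf{x}$. Taking entrywise moduli and using the triangle inequality together with the hypothesis $\mathbf{A} \geq |\mathbf{B}|$ yields the chain
$$r\,|\mathbf{x}| = |\mathbf{B}\mathbf{x}| \leq |\mathbf{B}|\,|\mathbf{x}| \leq \mathbf{A}\,|\mathbf{x}|,$$
so $|\mathbf{x}|$ is a nonnegative subinvariant vector for $\mathbf{A}$ at the level $r = \rho(\mathbf{A})$.

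First I would upgrade this inequality to an equality. Since $\mathbf{A}$ is nonnegative and irreducible, so is $\mathbf{A}^{\mathsf{T}}$, and hence Perron--Frobenius furnishes a strictly positive left eigenvector $\mathbf{z} > 0$ with $\mathbf{z}^{\mathsf{T}}\mathbf{A} = r\,\mathbf{z}^{\mathsf{T}}$. Pairing $\mathbf{z}$ against $\mathbf{A}|\mathbf{x}| - r|\mathbf{x}| \geq 0$ gives $\mathbf{z}^{\mathsf{T}}(\mathbf{A}|\mathbf{x}| - r|\mathbf{x}|) = 0$, and the strict positivity of $\mathbf{z}$ forces $\mathbf{A}|\mathbf{x}| = r|\mathbf{x}|$. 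Thus $|\mathbf{x}|$ is a nonnegative eigenvector of the irreducible matrix $\mathbf{A}$ at its spectral radius, so it is a positive scalar multiple of the Perron vector; in particular $|\mathbf{x}| > 0$ entrywise. Consequently every inequality in the displayed chain is in fact an equality.

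Next I would read off the two equalities separately. The equality $|\mathbf{B}|\,|\mathbf{x}| = \mathbf{A}\,|\mathbf{x}|$ rewrites as $(\mathbf{A} - |\mathbf{B}|)\,|\mathbf{x}| = 0$; since $\mathbf{A} - |\mathbf{B}| \geq 0$ and $|\mathbf{x}| > 0$, this forces $\mathbf{A} = |\mathbf{B}|$ entrywise. The equality $|\mathbf{B}\mathbf{x}| = |\mathbf{B}|\,|\mathbf{x}|$ is equality in the triangle inequality applied to each coordinate $\sum_t b_{st}x_t$, which means all nonzero summands $b_{st}x_t$ share a common argument. Writing $x_t = |x_t|e^{i\theta_t}$ (legitimate precisely because $|x_t| > 0$) and comparing with $\sum_t b_{st}x_t = e^{i\varphi} r\, x_s$, the common argument in row $s$ must be $\varphi + \theta_s$, giving $\arg b_{st} = \varphi + \theta_s - \theta_t$ whenever $b_{st} \neq 0$. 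Combining this with $|b_{st}| = a_{st}$ yields $b_{st} = e^{i\varphi} e^{i\theta_s} a_{st} e^{-i\theta_t}$, an identity that also holds trivially when $a_{st} = |b_{st}| = 0$. Setting $\mathbf{D} = \diag(e^{i\theta_1}, \dots, e^{i\theta_n})$, a diagonal unitary matrix, this is exactly $\mathbf{B} = e^{i\varphi}\mathbf{D}\mathbf{A}\mathbf{D}^{-1}$.

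I expect the main obstacle to be the passage from the subinvariance inequality $\mathbf{A}|\mathbf{x}| \geq r|\mathbf{x}|$ to genuine equality together with the strict positivity of $|\mathbf{x}|$: this is exactly where irreducibility is indispensable, and it must be invoked with care (through the positive left Perron eigenvector, or equivalently through the positivity of $(\mathbf{I} + \mathbf{A})^{n-1}$). Once $|\mathbf{x}| > 0$ is secured, each phase $\theta_t$ is well defined and the subsequent extraction of the argument relation is routine.
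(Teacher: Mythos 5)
Your proof is correct: the statement is Wielandt's theorem, which the paper does not prove at all but simply quotes from Horn and Johnson [Theorem 8.4.5], and your argument --- taking entrywise moduli of the eigenvector equation, upgrading the subinvariance inequality $\mathbf{A}|\mathbf{x}| \geq r|\mathbf{x}|$ to equality via the positive left Perron vector, deducing $\mathbf{A} = |\mathbf{B}|$ and $|\mathbf{x}| > 0$, and then extracting the phases from the equality case of the triangle inequality --- is exactly the standard proof of that result. The only pedantic point left implicit is that pinning the common argument in row $s$ to $\varphi + \theta_s$ requires the sum $e^{i\varphi} r x_s$ to be nonzero, i.e.\ $\rho(\mathbf{A}) > 0$; this holds because an irreducible nonnegative matrix of order $n \geq 2$ has no zero row, and the case $n = 1$ is trivial.
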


Next is the well-known Courant-Fisher theorem, which provides a variational formulation for the eigenvalue problem of the Hermitian matrices.
\begin{theorem}[\cite{meyer2000matrix}] \label{theorem:CF}
	Let $\bf H$ be an $n \times n$ Hermitian matrix with eigenvalues $\lambda_1 \leq \lambda_2 \leq \ldots \leq \lambda_n$. For an integer $k \: (1 \leq k \leq n)$, we have
	
	$$\lambda_k = \max_{\mathbf{x}^{(1)},\mathbf{x}^{(2)}, \ldots, \mathbf{x}^{(k-1)} \in \mathbb{C}^n} \: \min_{\substack{\mathbf{x} \perp \mathbf{x}^{(1)},\mathbf{x}^{(2)}, \ldots, \mathbf{x}^{(k-1)}; \\ \mathbf{x} \neq 0; \\ \mathbf{x}  \in \mathbb{C}^n}} \frac{\mathbf{x}^*\mathbf{H}\mathbf{x}}{\mathbf{x}^*\mathbf{x}},$$
	
	$$\lambda_k = \min_{\mathbf{x}^{(k+1)},\mathbf{x}^{(k+2)}, \ldots,
		\mathbf{x}^{(n)} \in \mathbb{C}^n} \max_{\substack{\mathbf{x} \perp \mathbf{x}^{(k+1)},\mathbf{x}^{(k+2)}, \ldots, \mathbf{x}^{(n)}; \\ \mathbf{x} \neq 0; \\ \mathbf{x}  \in \mathbb{C}^n}} \frac{\mathbf{x}^*\mathbf{H}\mathbf{x}}{\mathbf{x}^*\mathbf{x}},$$where $\mathbf{x}^{(1)},\mathbf{x}^{(2)}, \ldots, \mathbf{x}^{(k-1)}$ are linearly independent.
\end{theorem}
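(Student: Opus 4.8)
The plan is to reduce everything to the spectral theorem for Hermitian matrices. Since $\mathbf{H}$ is Hermitian, there is an orthonormal basis $\mathbf{u}_1, \dots, \mathbf{u}_n$ of $\mathbb{C}^n$ with $\mathbf{H}\mathbf{u}_j = \lambda_j \mathbf{u}_j$. Expanding an arbitrary vector $\mathbf{x} = \sum_{j=1}^n c_j \mathbf{u}_j$, the Rayleigh quotient collapses to a weighted average of the eigenvalues, $\frac{\mathbf{x}^*\mathbf{H}\mathbf{x}}{\mathbf{x}^*\mathbf{x}} = \frac{\sum_j \lambda_j |c_j|^2}{\sum_j |c_j|^2}$, and this single identity drives both directions. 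I would prove the max--min formula in detail and then obtain the min--max formula either by an entirely analogous argument or by applying the max--min identity to $-\mathbf{H}$ (whose ordered eigenvalues are $-\lambda_n \le \cdots \le -\lambda_1$) and re-indexing.

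For the max--min identity, I would first handle the easy direction, that the max--min is at least $\lambda_k$, by exhibiting the specific choice $\mathbf{x}^{(i)} = \mathbf{u}_i$ for $1 \le i \le k-1$. The constraints $\mathbf{x} \perp \mathbf{u}_1, \dots, \mathbf{u}_{k-1}$ force $c_1 = \cdots = c_{k-1} = 0$, so every feasible $\mathbf{x}$ lies in $\mathrm{span}\{\mathbf{u}_k, \dots, \mathbf{u}_n\}$, where the weighted average above is at least $\lambda_k$, with equality at $\mathbf{x} = \mathbf{u}_k$. Hence the inner minimum for this particular choice equals $\lambda_k$, and the outer maximum is therefore $\ge \lambda_k$.

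For the reverse direction, that the max--min is at most $\lambda_k$, I would show that for every admissible family $\mathbf{x}^{(1)}, \dots, \mathbf{x}^{(k-1)}$ the inner minimum is at most $\lambda_k$. Set $S = \mathrm{span}\{\mathbf{u}_1, \dots, \mathbf{u}_k\}$, of dimension $k$, and let $T$ be the orthogonal complement of $\mathrm{span}\{\mathbf{x}^{(1)}, \dots, \mathbf{x}^{(k-1)}\}$, of dimension at least $n-(k-1)$. Since $\dim S + \dim T \ge k + (n-k+1) = n+1 > n$, the intersection $S \cap T$ contains a nonzero vector $\mathbf{x} = \sum_{j=1}^k c_j \mathbf{u}_j$. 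This $\mathbf{x}$ is feasible for the inner minimization, and its Rayleigh quotient, being a weighted average of $\lambda_1, \dots, \lambda_k$, is at most $\lambda_k$. Thus the inner minimum is $\le \lambda_k$ for every choice of constraints, so the outer maximum is $\le \lambda_k$. Combining the two directions yields the max--min formula, and the min--max formula follows as indicated.

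The main obstacle is the dimension-counting step, namely the nonvanishing of $S \cap T$ via $\dim S + \dim T > n$. Everything else is a direct consequence of the spectral decomposition and the weighted-average form of the Rayleigh quotient; the only subtlety is to count the orthogonality constraints correctly, using that imposing $k-1$ linear conditions drops the dimension by at most $k-1$, so that $T$ has dimension at least $n-k+1$, which is exactly what forces the intersection to be nontrivial.
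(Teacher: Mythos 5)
Your proof is correct, but note that the paper itself offers no proof of this statement: Theorem \ref{theorem:CF} is the classical Courant--Fischer theorem, quoted verbatim from the reference \cite{meyer2000matrix} as a preliminary tool, so there is nothing in the paper to compare your argument against. What you have written is the standard textbook proof --- spectral decomposition to turn the Rayleigh quotient into a weighted average of eigenvalues, the explicit choice $\mathbf{x}^{(i)} = \mathbf{u}_i$ for the easy inequality, and the dimension count $\dim S + \dim T \geq (k) + (n-k+1) > n$ to produce a feasible test vector in $\mathrm{span}\{\mathbf{u}_1,\dots,\mathbf{u}_k\}$ for the hard inequality --- and all steps are sound, including the reduction of the min--max formula to the max--min formula applied to $-\mathbf{H}$. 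One small virtue of your write-up worth keeping: by arguing that $k-1$ orthogonality constraints cut the dimension by \emph{at most} $k-1$, your proof does not actually need the hypothesis that $\mathbf{x}^{(1)},\dots,\mathbf{x}^{(k-1)}$ are linearly independent (the clause appended to the statement), so your argument proves the formula with the outer optimization taken over arbitrary families of vectors, which is the form the paper implicitly uses later when deriving the Rayleigh-quotient formulation for $\mathcal{L}(\Phi)$ and the interlacing theorem.
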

\section{\textsf{Normalized gain Laplacian matrices}}\label{sec:norm-gain-lap}

In this section we introduce the notion of the gain normalized Laplacian matrix, and  study some of the spectral properties of the $\mathbb{T}$-gain graphs.
The \textit{gain normalized Laplacian matrix} $\mathcal{L}(\Phi) = (\mathcal{L}_{ij} ) \in \mathbb{C}^{n \times n} $ is defined entry-wise by

\begin{equation}
	\mathcal{L}_{ij} = \begin{cases} 1 & \quad \text{if} \quad v_i=v_j \: \text{and} \: d_i \neq 0, \\ \smallskip -\frac{1}{\sqrt{d_i d_j}} & \quad \text{if} \quad v_i \sim v_j \: \text{and} \: v_j \sim v_i, \\ \smallskip -\frac{\varphi(e_{ij})}{\sqrt{d_i d_j}} & \quad \text{if} \quad v_i \sim v_j \: \text{and} \: v_j \not\sim v_i, \\ \smallskip -\frac{\overline{\varphi}(e_{ij})}{\sqrt{d_i d_j}} & \quad \text{if} \quad v_i \not\sim v_j \: \text{and} \: v_j \sim v_i ,\\ 0 & \quad \text{otherwise.} \end{cases}
\end{equation}

Next, we shall prove a couple of basic properties about the gain normalized Laplacian matrices. The following characterization of bipartiteness with the normalized adjacency spectrum is useful throughout the article.

\begin{lemma}\label{spec-sym-bip-nor-adj}
	For a connected graph $G$, $\spec{\mathcal{A}(G)} = \spec{\mathcal{A}(-G)}$ if and only if $G$ is bipartite.
\end{lemma}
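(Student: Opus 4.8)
The plan is to begin by identifying $-G$ with the gain graph $(G,-1)$, so that $\mathbf{A}(-G)=-\mathbf{A}(G)$ and hence $\mathcal{A}(-G)=\mathbf{D}^{-1/2}\mathbf{A}(-G)\mathbf{D}^{-1/2}=-\mathcal{A}(G)$. Consequently $\spec(\mathcal{A}(-G))=\{-\lambda:\lambda\in\spec(\mathcal{A}(G))\}$, and the asserted equality $\spec(\mathcal{A}(G))=\spec(\mathcal{A}(-G))$ is precisely the statement that the spectrum of $\mathcal{A}(G)$ is symmetric about the origin. I would then prove the two implications separately.

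For the direction ``$G$ bipartite $\Rightarrow$ symmetric spectrum'', let $V=V_1\sqcup V_2$ be a bipartition and set $\mathbf{S}=\diag(s_1,\dots,s_n)$ with $s_i=1$ for $v_i\in V_1$ and $s_i=-1$ for $v_i\in V_2$. Since every edge joins $V_1$ to $V_2$, we have $s_is_j=-1$ whenever $v_i\sim v_j$, so the $(i,j)$ entry of $\mathbf{S}\mathcal{A}(G)\mathbf{S}$ equals $s_i\mathcal{A}(G)_{ij}s_j=-\mathcal{A}(G)_{ij}$, while zero entries are unaffected. As $\mathbf{S}=\mathbf{S}^{-1}$, this gives $\mathbf{S}\mathcal{A}(G)\mathbf{S}^{-1}=-\mathcal{A}(G)=\mathcal{A}(-G)$, so $\mathcal{A}(G)$ and $\mathcal{A}(-G)$ are similar and share a spectrum.

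For the converse I would invoke the Perron--Frobenius machinery of Theorem~\ref{theorem: mat-sim-gain}. Because $G$ is connected, $\mathcal{A}(G)$ is nonnegative and irreducible, and the positive vector $\mathbf{D}^{1/2}\mathbf{1}$ satisfies $\mathcal{A}(G)\,\mathbf{D}^{1/2}\mathbf{1}=\mathbf{D}^{1/2}\mathbf{1}$; by uniqueness of the positive Perron eigenvector this forces $\rho(\mathcal{A}(G))=1$. Symmetry of the spectrum then yields $-1\in\spec(\mathcal{A}(G))$, equivalently $1\in\spec(-\mathcal{A}(G))$, so $\rho(-\mathcal{A}(G))=1$ and $\lambda=1=e^{i\cdot 0}\rho(\mathbf{B})$ is a maximum-modulus eigenvalue of $\mathbf{B}:=-\mathcal{A}(G)$. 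Applying Theorem~\ref{theorem: mat-sim-gain} with $\mathbf{A}:=\mathcal{A}(G)$ (noting $|\mathbf{B}|=\mathcal{A}(G)=\mathbf{A}$ and $\rho(\mathbf{A})=\rho(\mathbf{B})=1$) produces a diagonal unitary $\mathbf{D}=\diag(e^{i\theta_1},\dots,e^{i\theta_n})$ with $-\mathcal{A}(G)=\mathbf{D}\,\mathcal{A}(G)\,\mathbf{D}^{-1}$.

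It then remains to extract a $2$-coloring. Comparing $(i,j)$ entries over each edge $v_i\sim v_j$, where $\mathcal{A}(G)_{ij}=1/\sqrt{d_id_j}>0$, gives $e^{i(\theta_i-\theta_j)}=-1$, i.e. $\theta_i-\theta_j\equiv\pi\pmod{2\pi}$ along every edge. Fixing a root $v_1$ and using connectivity, a path of length $m$ from $v_1$ to $v_k$ forces $\theta_k\equiv\theta_1+m\pi\pmod{2\pi}$, so every vertex lands in one of the two classes $V_1=\{v_i:\theta_i\equiv\theta_1\}$ and $V_2=\{v_i:\theta_i\equiv\theta_1+\pi\}$, and every edge runs between them; hence $G$ is bipartite. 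I expect the main point requiring care to be the correct setup of Theorem~\ref{theorem: mat-sim-gain}: verifying irreducibility, confirming $\rho(\mathbf{A})=\rho(\mathbf{B})=1$, and identifying the relevant maximum-modulus eigenvalue as $+1$ (phase $\varphi=0$) so that the conjugation relation reads $-\mathcal{A}(G)=\mathbf{D}\mathcal{A}(G)\mathbf{D}^{-1}$ rather than something weaker; once that is in hand, the passage to the bipartition is routine bookkeeping.
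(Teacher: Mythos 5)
Your proof is correct, but the converse is argued by a genuinely different route than the paper's. The paper's proof is very short: it notes that $1\in\spec(\mathcal{A}(G))$ (since $0$ is always an eigenvalue of $\mathcal{L}(G)$), uses the assumed symmetry to get $-1\in\spec(\mathcal{A}(G))$, hence $2\in\spec(\mathcal{L}(G))$, and then simply cites \cite[Lemma 1.7]{chung1996lectures} to conclude bipartiteness; the forward direction is dismissed as ``easy to see.'' You reach the same pivot point ($-1$ is an eigenvalue of $\mathcal{A}(G)$, with $\rho(\mathcal{A}(G))=1$ certified by the positive Perron vector $\mathbf{D}^{1/2}\mathbf{1}$), but instead of quoting Chung you close the argument yourself: you apply the equality case of Perron--Frobenius (Theorem~\ref{theorem: mat-sim-gain}) with $\mathbf{A}=\mathcal{A}(G)$, $\mathbf{B}=-\mathcal{A}(G)$, phase $\varphi=0$, to get $-\mathcal{A}(G)=\mathbf{D}_\theta\mathcal{A}(G)\mathbf{D}_\theta^{-1}$ for a diagonal unitary $\mathbf{D}_\theta$, and then read off the bipartition from the edge relations $\theta_i-\theta_j\equiv\pi\pmod{2\pi}$; your forward direction likewise gives an explicit signature similarity rather than an appeal to known facts. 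What your route buys is self-containedness and consistency with the paper's own toolkit --- it is exactly the Perron--Frobenius argument pattern the paper deploys later for Theorem~\ref{theorem: spec and (G,1)} and Theorem~\ref{theorem: nonsingular and balance}, and it produces the bipartition constructively; what the paper's route buys is brevity, at the cost of leaning on an external lemma. The only point worth tightening in your write-up is cosmetic: $\rho(-\mathcal{A}(G))=\rho(\mathcal{A}(G))=1$ holds automatically (negation preserves spectral radius); what the symmetry hypothesis actually supplies is that this common value is attained by $\mathbf{B}=-\mathcal{A}(G)$ at the eigenvalue $+1$, i.e.\ with phase $0$, which is precisely what forces the similarity to read $-\mathcal{A}(G)=\mathbf{D}_\theta\mathcal{A}(G)\mathbf{D}_\theta^{-1}$.
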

\begin{proof}
	If $G$ is bipartite, then it is easy to see that $\spec{\mathcal{A}(G)} = \spec{\mathcal{A}(-G)}$. Conversely, let $\spec{\mathcal{A}(G)} = \spec{\mathcal{A}(-G)}$. We know that $0$ is an eigenvalue of $\mathcal{L}(G)$, and hence $1$ is an eigenvalue of $\mathcal{A}(G)$. By the assumption, $-1$ is an eigenvalue of $\mathcal{A}(G)$, and hence $2$ is an eigenvalue of  $\mathcal{L}(G)$. Thus, by \cite[Lemma 1.7]{chung1996lectures}, $G$ is bipartite.
\end{proof}

The following lemmas characterize switching equivalence in terms of the spectrum and spectral radius of associated gain matrices.

\begin{lemma}\label{lemma:spec-equ-adj-lap}
	Let $\Phi_1$ and $\Phi_2$ be two connected gain graphs. If $\Phi_1 \sim \Phi_2$, then the following statements hold:
	\begin{enumerate}
		\item $\spec(\mathbf{A}(\Phi_1)) = \spec(\mathbf{A}(\Phi_2))$.
		\item $\spec(\mathcal{A}(\Phi_1)) = \spec(\mathcal{A}(\Phi_2))$.
		\item $\spec(\mathbf{L}(\Phi_1)) = \spec(\mathbf{L}(\Phi_2))$.
		\item $\spec(\mathcal{L}(\Phi_1)) = \spec(\mathcal{L}(\Phi_2))$.
	\end{enumerate}
\end{lemma}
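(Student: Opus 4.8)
The plan is to reduce all four statements to a single similarity transformation and then to thread that transformation through the three matrices derived from $\mathbf{A}$. By the matrix formulation of switching equivalence in \eqref{eq: switching equi}, the hypothesis $\Phi_1 \sim \Phi_2$ supplies a diagonal unitary matrix $\mathbf{D}_\zeta$, with diagonal entries in $\mathbb{T}$, such that $\mathbf{A}(\Phi_2) = \mathbf{D}_\zeta^{-1}\mathbf{A}(\Phi_1)\mathbf{D}_\zeta$. Since similar matrices share their eigenvalues, statement (1) is immediate.

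For the remaining three parts, the key observation I would record first is that switching does not alter the underlying graph $G$, so $\Phi_1$ and $\Phi_2$ have the same degree sequence and hence the same diagonal matrix $\mathbf{D} = \mathbf{D}(G)$ and the same $\mathbf{D}^{-1/2}$. Because $\mathbf{D}_\zeta$, $\mathbf{D}$, and $\mathbf{D}^{-1/2}$ are all diagonal, they commute pairwise; this single fact is what lets the one conjugation by $\mathbf{D}_\zeta$ propagate through each normalization.

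Then I would argue as follows. For (2), conjugate the defining expression $\mathcal{A}(\Phi_1) = \mathbf{D}^{-1/2}\mathbf{A}(\Phi_1)\mathbf{D}^{-1/2}$ and slide $\mathbf{D}_\zeta^{\pm 1}$ past the diagonal factors $\mathbf{D}^{-1/2}$ to obtain $\mathcal{A}(\Phi_2) = \mathbf{D}_\zeta^{-1}\mathcal{A}(\Phi_1)\mathbf{D}_\zeta$. For (3), write $\mathbf{L}(\Phi_i) = \mathbf{D} - \mathbf{A}(\Phi_i)$ and use $\mathbf{D} = \mathbf{D}_\zeta^{-1}\mathbf{D}\mathbf{D}_\zeta$ together with the adjacency relation to get $\mathbf{L}(\Phi_2) = \mathbf{D}_\zeta^{-1}\mathbf{L}(\Phi_1)\mathbf{D}_\zeta$. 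For (4), either conjugate $\mathcal{L}(\Phi_i) = \mathbf{I} - \mathcal{A}(\Phi_i)$ using (2) and $\mathbf{I} = \mathbf{D}_\zeta^{-1}\mathbf{I}\mathbf{D}_\zeta$, or sandwich the identity from (3) between copies of $\mathbf{D}^{-1/2}$ via $\mathcal{L}(\Phi_i) = \mathbf{D}^{-1/2}\mathbf{L}(\Phi_i)\mathbf{D}^{-1/2}$. In each case we land on a conjugation by the same unitary $\mathbf{D}_\zeta$, and equality of spectra follows from the invariance of eigenvalues under similarity.

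There is no serious obstacle here; the argument is a short chain of routine manipulations. The only point that needs care — and the one I would state explicitly — is that switching leaves $G$, and therefore the normalizing matrix $\mathbf{D}$, unchanged; once that is noted, commutativity of diagonal matrices does all the work, and the same $\mathbf{D}_\zeta$ simultaneously realizes all four similarities.
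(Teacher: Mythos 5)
Your proposal is correct and follows essentially the same route as the paper: both obtain the diagonal unitary $\mathbf{D}_\zeta$ from the switching relation \eqref{eq: switching equi} and propagate the single conjugation $\mathbf{A}(\Phi_2) = \mathbf{D}_\zeta^{-1}\mathbf{A}(\Phi_1)\mathbf{D}_\zeta$ through $\mathcal{A}$, $\mathbf{L}$, and $\mathcal{L}$, using the commutativity of the diagonal matrices $\mathbf{D}^{\pm 1/2}$ and $\mathbf{D}_\zeta$ to show each pair is similar via the same $\mathbf{D}_\zeta$. Your version merely makes explicit the two facts the paper leaves implicit, namely that switching preserves the degree matrix and that diagonal matrices commute.
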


\begin{proof}
	Let $\mathbf{A}(\Phi_2) = \mathbf{D}_{\zeta}^{-1}\mathbf{A}(\Phi_1)\mathbf{D}_{\zeta}.$
	Then, we have $\spec(\mathbf{A}(\Phi_1)) = \spec(\mathbf{A}(\Phi_2))$. It is clear that, $\mathcal{A}(\Phi_2) = \mathbf{D}_\zeta^{-1}\mathcal{A}(\Phi_1)\mathbf{D}_\zeta$, and hence  $\spec(\mathcal{A}(\Phi_1)) = \spec(\mathcal{A}(\Phi_2))$. Now, as $\mathbf{D} -\mathbf{A}(\Phi_2) = \mathbf{D}_{\zeta}^{-1}(\mathbf{D}-\mathbf{A}(\Phi_1))\mathbf{D}_{\zeta}$, so $\spec(\mathbf{L}(\Phi_1)) = \spec(\mathbf{L}(\Phi_2))$. Also, \\ $   \mathcal{L}(\Phi_2) = (\mathbf{D}^{-\frac{1}{2}}\mathbf{D}_{\zeta}^{-1} \mathbf{D}^{\frac{1}{2}}) \: \mathcal{L}(\Phi_1) \: (\mathbf{D}^{\frac{1}{2}} \mathbf{D}_{\zeta} \mathbf{D}^{-\frac{1}{2}})$, hence $\mathcal{L}(\Phi_2) = \mathbf{D}_{\zeta}^{-1}\mathcal{L}(\Phi_1)\mathbf{D}_{\zeta}$ and  $\spec(\mathcal{L}(\Phi_1)) = \spec(\mathcal{L}(\Phi_2))$.
\end{proof}

\begin{lemma}\label{lemma: rho-adj-lap}
	Let $\Phi = (G, \varphi)$ be a connected $\mathbb{T}$-gain graph.  Then the following statements hold:
	\begin{enumerate}
		\item[(i)] $\rho(\mathbf{A}(\Phi)) \leq \rho(|\mathbf{A}(\Phi)|) = \rho(\mathbf{A}(G))$.
		\item[(ii)] $\rho(\mathcal{A}(\Phi)) \leq \rho(|\mathcal{A}(\Phi)|) = \rho(\mathcal{A}(G))$.
		\item[(iii)] $\rho(\mathcal{L}(\Phi)) \leq \rho(|\mathcal{L}(\Phi)|) = \rho(\mathcal{L}(-G))$.
	\end{enumerate}
\end{lemma}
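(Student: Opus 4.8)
The plan is to observe that all three assertions share the same shape: an inequality $\rho(\,\cdot\,)\le\rho(|\,\cdot\,|)$ followed by an identification of the modulus matrix with a prescribed nonnegative matrix. The inequality in every case is immediate from Theorem~\ref{theorem: rho}: applying that result with $\mathbf{B}=|\mathbf{A}|$ (so that the hypothesis $|\mathbf{A}|\le\mathbf{B}$ holds with equality and $\mathbf{B}$ is nonnegative) yields at once $\rho(\mathbf{A})\le\rho(|\mathbf{A}|)$, for $\mathbf{A}$ equal to $\mathbf{A}(\Phi)$, $\mathcal{A}(\Phi)$, and $\mathcal{L}(\Phi)$ respectively. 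Thus the entire content of the lemma reduces to computing the three modulus matrices entry-wise and matching them with $\mathbf{A}(G)$, $\mathcal{A}(G)$, and $\mathcal{L}(-G)$.

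For part (i), I would use that every gain lies in $\mathbb{T}$ and hence has modulus one: the $(s,t)$ entry of $\mathbf{A}(\Phi)$ is $\varphi(\overrightarrow{e_{st}})$ when $v_s\sim v_t$ and $0$ otherwise, so $|\mathbf{A}(\Phi)|$ carries a $1$ exactly on the adjacency positions, i.e.\ $|\mathbf{A}(\Phi)|=\mathbf{A}(G)$, which gives $\rho(|\mathbf{A}(\Phi)|)=\rho(\mathbf{A}(G))$. For part (ii), I would write $\mathcal{A}(\Phi)=\mathbf{D}^{-\frac12}\mathbf{A}(\Phi)\mathbf{D}^{-\frac12}$ and note that since $\mathbf{D}^{-\frac12}$ is a nonnegative diagonal matrix, taking absolute values commutes with this conjugation entry-wise: the $(i,j)$ entry $d_i^{-1/2}a_{ij}d_j^{-1/2}$ has modulus $d_i^{-1/2}|a_{ij}|d_j^{-1/2}$. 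Hence $|\mathcal{A}(\Phi)|=\mathbf{D}^{-\frac12}|\mathbf{A}(\Phi)|\mathbf{D}^{-\frac12}=\mathbf{D}^{-\frac12}\mathbf{A}(G)\mathbf{D}^{-\frac12}=\mathcal{A}(G)$.

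Part (iii) is the one requiring a genuine observation, and it is where I expect the only real subtlety to lie. From the entry-wise definition of $\mathcal{L}(\Phi)$, its diagonal entries are $1$ (as $G$ is connected, there are no isolated vertices and $\mathbf{D}^{-\frac12}$ is well-defined), while each off-diagonal adjacency entry has modulus $\frac{1}{\sqrt{d_id_j}}$; thus $|\mathcal{L}(\Phi)|$ has $1$ on the diagonal and $+\frac{1}{\sqrt{d_id_j}}$ on the adjacency positions. On the other hand, for the all-$(-1)$ gain graph one has $\mathbf{A}(-G)=-\mathbf{A}(G)$, so $\mathcal{A}(-G)=-\mathcal{A}(G)$ and $\mathcal{L}(-G)=\mathbf{I}-\mathcal{A}(-G)=\mathbf{I}+\mathcal{A}(G)$, whose entries are precisely $1$ on the diagonal and $+\frac{1}{\sqrt{d_id_j}}$ on adjacency positions. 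Therefore $|\mathcal{L}(\Phi)|=\mathcal{L}(-G)$, and Theorem~\ref{theorem: rho} closes the argument with $\rho(\mathcal{L}(\Phi))\le\rho(|\mathcal{L}(\Phi)|)=\rho(\mathcal{L}(-G))$.

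The main point to get right — and the step I would flag as the crux — is that passing to absolute values turns the (possibly complex, sign-carrying) off-diagonal entries into the \emph{positive} quantities $\frac{1}{\sqrt{d_id_j}}$, which is exactly the effect of negating all gains inside the normalized Laplacian; consequently the comparison matrix is $\mathcal{L}(-G)$ and not $\mathcal{L}(G)$. Everything else is a routine bookkeeping of entries together with the invariance of the spectral radius under the nonnegative diagonal scalings, with connectedness used only to guarantee that $\mathbf{D}^{-\frac12}$ exists.
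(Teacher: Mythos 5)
Your proof is correct and takes essentially the same approach as the paper: both arguments consist of applying Theorem~\ref{theorem: rho} to the entry-wise identifications $|\mathbf{A}(\Phi)|=\mathbf{A}(G)$, $|\mathcal{A}(\Phi)|=\mathcal{A}(G)$, and $|\mathcal{L}(\Phi)|=\mathcal{L}(-G)$. The only difference is in part (iii), where the paper's one-line justification (``trivially because $\rho(\mathcal{L}(-G))=2$'') leaves the identification implicit, whereas your explicit computation $|\mathcal{L}(\Phi)|=\mathbf{I}+\mathcal{A}(G)=\mathcal{L}(-G)$ makes that step self-contained and avoids any appeal to the bound $\rho(\mathcal{L}(\Phi))\le 2$, which the paper only establishes afterwards in Theorem~\ref{theorem:0-eig-2}.
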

\begin{proof}
	$(i)$ and $(ii)$ follow from Theorem \ref{theorem: rho} because  $|\mathbf{A}(\Phi)| \leq \mathbf{A}(G)$, and therefore  $|\mathcal{A}(\Phi)| \leq \mathcal{A}(G)$. $(iii)$  follows trivially because $\rho(\mathcal{L}(-G))=2.$
\end{proof}
The next two lemmas give the quadratic form of normalized Laplacian in terms of graph properties, which helps obtain results for the corresponding matrix spectrum.
\begin{lemma}{\cite[Lemma 5.3]{reff2012spectral}}   \label{lemma:unorm hermitian}
	Let $\Phi$ be a gain graph on $n$ vertices and $\mathbf{x} = (x_1 , x_2 , \dots, x_n) \in \mathbb{C}^n$ be a row vector. Then
	\begin{equation}
		\mathbf{x}^*\mathbf{L}(\Phi)\mathbf{x} = \sum_{v_i \sim v_j} |x_i-a_{ij} x_j|^2.
	\end{equation}
\end{lemma}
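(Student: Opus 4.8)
The plan is to prove the identity by a direct expansion of the quadratic form, splitting $\mathbf{L}(\Phi) = \mathbf{D}(G) - \mathbf{A}(\Phi)$ into its diagonal and adjacency parts and then reassembling the result into the edge-indexed sum on the right-hand side. The whole argument is elementary linear algebra; the only structural facts I need are that $\mathbf{A}(\Phi)$ is Hermitian (so that $a_{ji} = \overline{a_{ij}}$) and that every gain lies on the unit circle, i.e. $|a_{ij}| = 1$ whenever $v_i \sim v_j$, since $\varphi(\overrightarrow{e_{ij}}) \in \mathbb{T}$.

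First I would write $\mathbf{x}^*\mathbf{L}(\Phi)\mathbf{x} = \mathbf{x}^*\mathbf{D}(G)\mathbf{x} - \mathbf{x}^*\mathbf{A}(\Phi)\mathbf{x}$. The diagonal term is immediately $\sum_{i} d_i |x_i|^2$. For the adjacency term I would group the two ordered pairs $(i,j)$ and $(j,i)$ attached to each undirected edge; using $a_{ji} = \overline{a_{ij}}$ this collapses to $\mathbf{x}^*\mathbf{A}(\Phi)\mathbf{x} = \sum_{v_i \sim v_j} 2\,\Re(\overline{x_i}\,a_{ij}\,x_j)$, where the sum now runs over undirected edges. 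Independently, I would expand the target summand as $|x_i - a_{ij}x_j|^2 = |x_i|^2 - 2\,\Re(\overline{x_i}\,a_{ij}\,x_j) + |a_{ij}|^2|x_j|^2$ and invoke $|a_{ij}| = 1$ to replace $|a_{ij}|^2|x_j|^2$ by $|x_j|^2$. Summing over all undirected edges, the terms $|x_i|^2 + |x_j|^2$ reassemble into $\sum_i d_i |x_i|^2$, because each vertex $v_i$ contributes $|x_i|^2$ exactly once for each of its $d_i$ incident edges, while the cross terms reproduce $-\sum_{v_i\sim v_j} 2\,\Re(\overline{x_i}\,a_{ij}\,x_j)$. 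Matching the two computations yields the claim.

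The only place demanding care, and thus the main obstacle, is the bookkeeping of the edge-summation convention: the quadratic form of a Hermitian matrix naturally produces a sum over ordered pairs, whereas the right-hand side is indexed by undirected edges, so I must make the two-to-one correspondence explicit and track precisely where the factor of $2$ and the real part originate. Once the conventions are pinned down, the unit-modulus identity $|a_{ij}|=1$ does all the remaining work, and no nontrivial estimate or spectral input is needed.
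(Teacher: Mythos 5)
Your proof is correct. Note, however, that the paper does not prove this lemma at all: it is imported verbatim from Reff \cite[Lemma 5.3]{reff2012spectral}, so there is no in-paper argument to compare against; your direct expansion is the standard, self-contained verification of that cited result. The one delicate point you flag is exactly the right one: the sum $\sum_{v_i \sim v_j}$ must be read over \emph{unordered} edges (as the paper uses it elsewhere, e.g.\ in Theorem \ref{theorem:0-eig-2}), and your pairing of the ordered contributions $(i,j)$ and $(j,i)$ via $a_{ji}=\overline{a_{ij}}$, combined with $|a_{ij}|=1$ to turn $|a_{ij}|^2|x_j|^2$ into $|x_j|^2$ and reassemble $\sum_{v_i\sim v_j}\left(|x_i|^2+|x_j|^2\right)$ into $\sum_i d_i|x_i|^2$, reproduces $\mathbf{x}^*\mathbf{D}(G)\mathbf{x}-\mathbf{x}^*\mathbf{A}(\Phi)\mathbf{x}$ exactly. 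No gaps.
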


\begin{lemma}\label{lemma:normalized-quadratic-form}
	Let $\Phi$ be a connected gain graph. Then for every vector $\mathbf{x} \in \mathbb{C}^n$, the following holds
	\begin{equation}
		\mathbf{x}^{*} \mathcal{L}(\Phi) \mathbf{x} = \sum_{v_i \sim v_j} \left|\frac{x_i}{\sqrt{d_i}}- a_{ij} \frac{x_j}{\sqrt{d_j}}\right|^2.
	\end{equation}
\end{lemma}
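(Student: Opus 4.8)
The plan is to reduce the computation to the quadratic form of the unnormalized gain Laplacian, which is already available in Lemma \ref{lemma:unorm hermitian}. The key preliminary observation is that, since $\Phi$ is connected, its underlying graph has no isolated vertices, so every degree $d_i$ is positive and the diagonal matrix $\mathbf{D}^{-\frac{1}{2}}$ is well defined. First I would verify entrywise the factorization
\begin{equation*}
	\mathcal{L}(\Phi) = \mathbf{D}^{-\frac{1}{2}} \mathbf{L}(\Phi) \mathbf{D}^{-\frac{1}{2}},
\end{equation*}
by comparing against $\mathbf{L}(\Phi) = \mathbf{D}(G) - \mathbf{A}(\Phi)$: conjugating by $\mathbf{D}^{-\frac{1}{2}}$ turns the diagonal entry $d_i$ into $\tfrac{1}{\sqrt{d_i}}\, d_i\, \tfrac{1}{\sqrt{d_i}} = 1$, and for adjacent vertices $v_i \sim v_j$ it turns the off-diagonal entry $-a_{ij} = -\varphi(e_{ij})$ into $-\varphi(e_{ij})/\sqrt{d_i d_j}$, which is exactly the defining entry of $\mathcal{L}(\Phi)$.

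Given this factorization, I would set $\mathbf{y} = \mathbf{D}^{-\frac{1}{2}}\mathbf{x}$, so that $y_i = x_i/\sqrt{d_i}$, and write
\begin{equation*}
	\mathbf{x}^{*} \mathcal{L}(\Phi)\mathbf{x} = \mathbf{x}^{*} \mathbf{D}^{-\frac{1}{2}} \mathbf{L}(\Phi)\, \mathbf{D}^{-\frac{1}{2}} \mathbf{x} = \mathbf{y}^{*} \mathbf{L}(\Phi)\mathbf{y}.
\end{equation*}
Applying Lemma \ref{lemma:unorm hermitian} to the vector $\mathbf{y}$ gives $\mathbf{y}^{*}\mathbf{L}(\Phi)\mathbf{y} = \sum_{v_i \sim v_j} |y_i - a_{ij} y_j|^2$, and substituting $y_i = x_i/\sqrt{d_i}$ back in produces precisely the claimed expression. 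With the factorization in hand, the proof is then immediate, so I expect the only real content to be the (routine) entry comparison establishing the displayed identity, together with the remark that connectedness rules out isolated vertices and hence makes $\mathbf{D}^{-\frac{1}{2}}$ legitimate.

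Alternatively, one may bypass Lemma \ref{lemma:unorm hermitian} and expand $\mathbf{x}^{*}\mathcal{L}(\Phi)\mathbf{x} = \sum_{i,j}\overline{x_i}\,\mathcal{L}_{ij}\,x_j$ directly from the definition. Here the diagonal terms contribute $\sum_i |x_i|^2$, while each unordered adjacent pair $\{v_i,v_j\}$ contributes both the $(i,j)$ and $(j,i)$ off-diagonal terms; using $\varphi(e_{ji}) = \overline{\varphi(e_{ij})}$, i.e. $a_{ji} = \overline{a_{ij}}$ with $|a_{ij}| = 1$, these two terms are complex conjugates and combine correctly. In this route the point demanding care is the bookkeeping of the diagonal weight: one rewrites $\sum_i |x_i|^2 = \sum_{v_i \sim v_j}\big(\tfrac{|x_i|^2}{d_i} + \tfrac{|x_j|^2}{d_j}\big)$, since the term $|x_i|^2/d_i$ recurs once for each of the $d_i$ edges incident to $v_i$. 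This redistribution is exactly what lets the squared-modulus terms and the mixed cross terms assemble, edge by edge, into the perfect square $\big|x_i/\sqrt{d_i} - a_{ij}\,x_j/\sqrt{d_j}\big|^2$, recovering the stated formula.
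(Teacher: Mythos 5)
Your proposal is correct and follows essentially the same route as the paper: set $\mathbf{y} = \mathbf{D}^{-\frac{1}{2}}\mathbf{x}$, use the factorization $\mathcal{L}(\Phi) = \mathbf{D}^{-\frac{1}{2}}\mathbf{L}(\Phi)\mathbf{D}^{-\frac{1}{2}}$, and invoke Lemma \ref{lemma:unorm hermitian} before substituting back. Your explicit entrywise verification of the factorization and the remark that connectedness guarantees $d_i > 0$ are details the paper leaves implicit, and your alternative direct expansion is a fine (if unnecessary) consistency check.
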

\begin{proof}
	Let $ \mathbf{y} = \mathbf{D}^{-\frac{1}{2}} \mathbf{x}$.  Then $\mathbf{x}^{*} \mathcal{L}(G) \mathbf{x} = (\mathbf{D}^{-\frac{1}{2}}\mathbf{x})^{*} \: \mathbf{L}(G) \: (\mathbf{D}^{-\frac{1}{2}}\mathbf{x}) = \mathbf{y}^{*}\mathbf{L}(G) \mathbf{y}$. Thus, by Lemma \ref{lemma:unorm hermitian}, we have $\mathbf{x}^{*} \mathcal{L}(G) \mathbf{x} = \sum_{v_i \sim v_j} |y_i-a_{ij} y_j|^2$. Writing in terms of $\mathbf{x}$ yields the result.
\end{proof}

Let $\Phi$ be a connected gain graph. For complex column vectors $\mathbf{x}$ and $\mathbf{x}^{(i)}$, we define the vectors $$\mathbf{y} = \mathbf{D}^{-\frac{1}{2}} \mathbf{x}, \quad \mathbf{y}^{(i)} = \mathbf{D}^{-\frac{1}{2}} \mathbf{x}^{(i)}.$$ Note that $\mathbf{y} \perp \mathbf{y}^{(1)}, \mathbf{y}^{(2)}, \ldots, \mathbf{y}^{(k-1)} \quad \text{if and only if}  \quad \mathbf{x} \perp \mathbf{x}^{(1)},\mathbf{x}^{(2)}, \ldots, \mathbf{x}^{(k-1)}$. By Lemma \ref{lemma:unorm hermitian} and Theorem \ref{theorem:CF}, we have

\begin{equation*}
	\lambda_k = \max_{\mathbf{x}^{(1)},\mathbf{x}^{(2)}, \ldots, \mathbf{x}^{(k-1)} \in \mathbb{C}^n} \: \min_{\substack{\mathbf{x} \perp \mathbf{x}^{(1)},\mathbf{x}^{(2)}, \ldots, \mathbf{x}^{(k-1)}; \\ \mathbf{x} \in \mathbb{C}^n; ~\mathbf{x} \neq 0} } \frac{\mathbf{x}^*\mathcal{L}(\Phi)\mathbf{x}}{\mathbf{x}^*\mathbf{x}},
\end{equation*}

\begin{equation*}
	= \max_{\mathbf{x}^{(1)},\mathbf{x}^{(2)}, \ldots, \mathbf{x}^{(k-1)} \in \mathbb{C}^n} \: \min_{\substack{\mathbf{x} \perp \mathbf{x}^{(1)},\mathbf{x}^{(2)}, \ldots, \mathbf{x}^{(k-1)}; \\ \mathbf{x} \in \mathbb{C}^n;~ \mathbf{x} \neq 0} } \frac{(\mathbf{D}^{\frac{1}{2}} \mathbf{y})^*\mathbf{D}^{-\frac{1}{2}}\mathbf{L}(\Phi)\mathbf{D}^{-\frac{1}{2}}(\mathbf{D}^{\frac{1}{2}} \mathbf{y})}{(\mathbf{D}^{\frac{1}{2}} \mathbf{y})^*(\mathbf{D}^{\frac{1}{2}} \mathbf{y})},
\end{equation*}

\begin{equation*}
	= \max_{\mathbf{y}^{(1)},\mathbf{y}^{(2)}, \ldots, \mathbf{y}^{(k-1)} \in \mathbb{C}^n} \: \min_{\substack{\mathbf{y} \perp \mathbf{y}^{(1)},\mathbf{y}^{(2)}, \ldots, \mathbf{y}^{(k-1)}; \\ \mathbf{y} \in \mathbb{C}^n; ~\mathbf{y} \neq 0} } \frac{\mathbf{y}^*\mathbf{L}(\Phi)\mathbf{y}}{\mathbf{y}^*\mathbf{D}\mathbf{y}},
\end{equation*}

\begin{equation*}\label{eq:unnorm-rayleigh}
	= \max_{\mathbf{y}^{(1)},\mathbf{y}^{(2)}, \ldots, \mathbf{y}^{(k-1)} \in \mathbb{C}^n} \: \min_{\substack{\mathbf{y} \perp \mathbf{y}^{(1)},\mathbf{y}^{(2)}, \ldots, \mathbf{y}^{(k-1)}; \\ \mathbf{y} \in \mathbb{C}^n; ~\mathbf{y} \neq 0} } \frac{\sum_{v_i \sim v_j} |y_i-a_{ij} y_j|^2}{\sum_{i} d_i|y_i|^2}.
\end{equation*}
Similarly, we have

\begin{equation*}
	\lambda_k = \min_{\mathbf{y}^{(k+1)},\mathbf{y}^{(k+2)}, \ldots,
		\mathbf{y}^{(n)} \in \mathbb{C}^n} \max_{\substack{\mathbf{y} \perp \mathbf{y}^{(k+1)},\mathbf{y}^{(k+2)}, \ldots, \mathbf{y}^{(n)}; \\ \mathbf{y} \in \mathbb{C}^n; ~\mathbf{y} \neq 0} }\frac{\sum_{v_i \sim v_j} |y_i-a_{ij} y_j|^2}{\sum_{i} d_i|y_i|^2}.
\end{equation*}

In particular,
\begin{subequations}
	\begin{align}
		\lambda_1 &= \mathop{\min}_{\mathbf{y} \neq 0} \frac{\sum_{v_i \sim v_j} |y_i-a_{ij} y_j|^2}{\sum_{i} d_i|y_i|^2}, \mbox{~and} \label{eq:rayleighQ-a} \\
		\lambda_n &= \mathop{\max}_{\mathbf{y} \neq 0} \frac{\sum_{v_i \sim v_j} |y_i-a_{ij} y_j|^2}{\sum_{i} d_i|y_i|^2}. \label{eq:rayleighQ-b}
	\end{align}
\end{subequations}
The following two theorems establish bounds on the spectrum of gain normalized Laplacian matrix and the sharpness of the bounds are discussed in the next section.
\begin{theorem}\label{theorem:0-eig-2}
	Let $\Phi$ be a connected gain graph of order $n \geq 2$ .  If  $\{\lambda_1, \lambda_2, \ldots, \lambda_n\}$ is the spectrum of $\mathcal{L}(\Phi)$, then
	$$0 \leq \lambda_i \leq 2$$ for any $1 \leq i \leq n$.
\end{theorem}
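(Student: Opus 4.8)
The plan is to read both bounds off directly from the Rayleigh-quotient characterizations \eqref{eq:rayleighQ-a} and \eqref{eq:rayleighQ-b}, which were derived just above from the Courant-Fisher Theorem \ref{theorem:CF} together with the quadratic form of Lemma \ref{lemma:normalized-quadratic-form}. Since $\Phi$ is connected of order $n \geq 2$, every vertex satisfies $d_i \geq 1$, so the denominator $\sum_i d_i |y_i|^2$ is strictly positive for each $\mathbf{y} \neq 0$, and both quotients are well defined. For the lower bound I would simply observe that the numerator $\sum_{v_i \sim v_j} |y_i - a_{ij} y_j|^2$ is a sum of squared moduli, hence nonnegative; therefore the minimizing quotient \eqref{eq:rayleighQ-a} gives $\lambda_1 \geq 0$, and as $\lambda_1$ is the smallest eigenvalue we get $\lambda_i \geq 0$ for all $i$. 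This is just the statement that $\mathcal{L}(\Phi)$ is positive semi-definite.

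For the upper bound the key step is the elementary inequality $|z - w|^2 \leq 2(|z|^2 + |w|^2)$ applied with $z = y_i$ and $w = a_{ij} y_j$. Crucially, because the gains lie on $\mathbb{T}$ we have $|a_{ij}| = 1$, so $|a_{ij} y_j| = |y_j|$ and hence $|y_i - a_{ij} y_j|^2 \leq 2(|y_i|^2 + |y_j|^2)$ termwise. Summing over all edges and using the double-counting identity $\sum_{v_i \sim v_j} (|y_i|^2 + |y_j|^2) = \sum_i d_i |y_i|^2$ — each vertex $v_i$ contributes $|y_i|^2$ exactly $d_i$ times, once per incident edge — yields $\sum_{v_i \sim v_j} |y_i - a_{ij} y_j|^2 \leq 2 \sum_i d_i |y_i|^2$. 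Dividing by the positive denominator shows every quotient appearing in \eqref{eq:rayleighQ-b} is at most $2$, so $\lambda_n \leq 2$ and therefore $\lambda_i \leq 2$ for all $i$.

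There is no serious obstacle here; the argument is a routine adaptation of the classical estimate for the ordinary normalized Laplacian. The only two points that need care are the use of $|a_{ij}| = 1$ — this is precisely where the gain structure does no harm, since the bound $|z - w|^2 \leq 2(|z|^2 + |w|^2)$ is insensitive to the phase of $a_{ij}$ — and the verification that the edge-sum double-counts correctly to the degree-weighted vertex sum $\sum_i d_i |y_i|^2$. Both are immediate, so the conclusion follows directly from the variational formulas.
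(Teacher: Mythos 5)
Your proposal is correct and follows essentially the same route as the paper: both use the variational formulas \eqref{eq:rayleighQ-a}--\eqref{eq:rayleighQ-b}, the termwise inequality $|z-w|^2 \leq 2(|z|^2+|w|^2)$ together with $|a_{ij}|=1$, and the double-counting identity $\sum_{v_i \sim v_j}(|y_i|^2+|y_j|^2)=\sum_i d_i |y_i|^2$ to bound the Rayleigh quotient in $[0,2]$. Your write-up is in fact slightly more careful than the paper's, which states the same argument tersely (and with a small typographical slip in the intermediate sum).
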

\begin{proof}
	For two complex numbers $a, b$:  $|a + b|^2 \leq 2|a|^2 + 2|b|^2$ holds. Thus $$\sum_{v_i \sim v_j} |y_i-a_{ij} y_j|^2 \leq \sum_{v_i \sim v_j} (2|y_i|^2 + 2|y_j|^2) = \sum_{v_i \sim v_j} 2d_i|y_i|^2,$$ Then, it follows that $$0 \leq \frac{\sum_{v_i \sim v_j} |y_i-a_{ij} y_j|^2}{\sum_{i} d_i|y_i|^2} \leq 2.$$
\end{proof}

\begin{theorem}\label{theorem:connectedness-laplacian}
	Let $\Phi = (G, \varphi)$ be a connected $\mathbb{T}$-gain graph on $n$ vertices and let $0 = \lambda_1 \leq \lambda_2 \leq \ldots \leq \lambda_n$ be    the eigenvalues of $\mathcal{L}(\Phi)$. Then, the following statements hold:
	\begin{enumerate}
		\item[(i)] $\sum_{i=1}^n \lambda_i= n$.
		\item[(ii)] If $\Phi$ is a gain graph  and $n \geq 2$, then $ \lambda_2 \leq \frac{n}{n-1}.$ If $\Phi$ is balanced, then $ \lambda_n \geq \frac{n}{n-1}.$
		Moreover, equality in both cases holds if and only if $\Phi$ is a balanced complete graph.
		\item[(iii)] $\lambda_1 < 1$ and $\lambda_n > 1.$
	\end{enumerate}
\end{theorem}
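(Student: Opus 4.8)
The plan is to reduce everything to the trace identity of part (i) together with the Rayleigh-quotient formulas already set up, isolating the classification of the complete graph as the one genuinely non-routine step. For part (i) I would simply read off the trace: every diagonal entry of $\mathcal{L}(\Phi)$ equals $1$ because $\Phi$ is connected with $n \geq 2$, so each $d_i \geq 1 \neq 0$; hence $\sum_i \lambda_i = \operatorname{tr}\mathcal{L}(\Phi) = n$.

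For the two bounds in part (ii) I would read (i) as an averaging statement over the $n-1$ eigenvalues $\lambda_2,\dots,\lambda_n$. Since Theorem~\ref{theorem:0-eig-2} gives $\lambda_1 \geq 0$, we have $\sum_{i=2}^{n}\lambda_i = n-\lambda_1 \leq n$, and as $\lambda_2$ is the smallest of these $n-1$ values, $\lambda_2 \leq \frac{1}{n-1}\sum_{i=2}^{n}\lambda_i \leq \frac{n}{n-1}$, valid for every connected gain graph. For the lower bound on $\lambda_n$, balancedness enters through Theorem~\ref{balan-equi-switch} and Lemma~\ref{lemma:spec-equ-adj-lap}(4): if $\Phi$ is balanced then $\Phi \sim (G,1)$, so $0 \in \spec\mathcal{L}(\Phi)$, i.e.\ $\lambda_1 = 0$, giving $\sum_{i=2}^{n}\lambda_i = n$ and therefore $\lambda_n \geq \frac{1}{n-1}\sum_{i=2}^{n}\lambda_i = \frac{n}{n-1}$ since $\lambda_n$ is the largest of the $n-1$ values.

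The equality analysis is where the real work lies. In either bound, equality forces $\lambda_1 = 0$ and $\lambda_2 = \lambda_3 = \cdots = \lambda_n = \frac{n}{n-1}$, because the minimum (resp.\ maximum) of a list equals its average only when all entries coincide. The condition $\lambda_1 = 0$ means $\mathbf{x}^{*}\mathcal{L}(\Phi)\mathbf{x}=0$ for some $\mathbf{x}\neq 0$; substituting into the quadratic form of Lemma~\ref{lemma:normalized-quadratic-form} forces $x_i/\sqrt{d_i} = a_{ij}\,x_j/\sqrt{d_j}$ across every edge, and propagating this relation around any cycle (using connectivity and $|a_{ij}|=1$) yields $\varphi(C)=1$, so $\Phi$ is balanced. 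I would then switch to $(G,1)$ via Lemma~\ref{lemma:spec-equ-adj-lap}, so that $\mathcal{A}(G) = \mathbf{I}-\mathcal{L}(G)$ has exactly the two eigenvalues $1$ and $-\tfrac{1}{n-1}$. Its minimal polynomial being quadratic gives $\mathcal{A}(G)^2 = \tfrac{n-2}{n-1}\mathcal{A}(G) + \tfrac{1}{n-1}\mathbf{I}$; comparing the $(i,j)$ entry for a hypothetical pair of distinct non-adjacent vertices shows they share no common neighbour, contradicting connectivity (a shortest path of length $2$ would produce one). Hence $G=K_n$, and the converse—that a balanced $K_n$ realizes the spectrum $\{0,(\tfrac{n}{n-1})^{n-1}\}$—is the routine computation $\mathcal{L}(K_n)=\tfrac{n}{n-1}\mathbf{I}-\tfrac{1}{n-1}\mathbf{J}$.

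Finally, part (iii) again rests on (i): writing $\sum_i(\lambda_i-1)=0$, the eigenvalues cannot all equal $1$, for otherwise $\mathcal{L}(\Phi)=\mathbf{I}$ and thus $\mathcal{A}(\Phi)=\mathbf{I}-\mathcal{L}(\Phi)=0$, impossible since the connected graph $G$ on $n\geq 2$ vertices has an edge. A zero-sum list of $n$ reals that is not identically zero must contain both a strictly negative and a strictly positive entry, so $\lambda_1<1$ and $\lambda_n>1$. I expect the equality classification in part (ii)—converting the spectral condition ``two distinct eigenvalues'' into the combinatorial conclusion ``$G$ is complete''—to be the main obstacle; everything else is trace bookkeeping atop the already-established variational formulas.
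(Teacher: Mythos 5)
Your proof is correct, and on parts (i), (iii) and the two inequalities of part (ii) it coincides with the paper's argument essentially line for line (trace, averaging over $\lambda_2,\dots,\lambda_n$, and the ``all eigenvalues equal one forces $\mathcal{A}(\Phi)=0$'' contradiction). The genuine divergence is in the equality characterization of (ii). The paper outsources both key steps: from $\lambda_1=0$ it invokes Corollary \ref{corollary: singularity and balancedness} (which rests on the Perron--Frobenius-type Theorem \ref{theorem: mat-sim-gain}) to get balancedness, and it then cites Chung's Lemma 1.7 to conclude that the underlying graph is complete. You prove both ingredients from scratch: balancedness by feeding a null vector of $\mathcal{L}(\Phi)$ into the quadratic form of Lemma \ref{lemma:normalized-quadratic-form}, so that $x_i/\sqrt{d_i}=a_{ij}x_j/\sqrt{d_j}$ holds across every edge, with connectivity and $|a_{ij}|=1$ keeping all entries nonzero and forcing every cycle gain to equal $1$; and completeness via the minimal-polynomial identity $\mathcal{A}(G)^2=\tfrac{n-2}{n-1}\mathcal{A}(G)+\tfrac{1}{n-1}\mathbf{I}$, whose $(i,j)$ entry for a non-adjacent pair shows such a pair has no common neighbour --- the only point where your wording should be tightened is here: the contradiction is that a connected non-complete graph always contains two vertices at distance exactly two, and that pair is non-adjacent yet has a common neighbour. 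Your route is longer but elementary and self-contained, and it has the structural advantage of avoiding the paper's forward reference: Corollary \ref{corollary: singularity and balancedness} appears only in the following section, and its own proof in turn cites the present theorem, so your version untangles that ordering. The paper's route is shorter and reuses machinery it develops anyway (the Perron--Frobenius similarity theorem and Chung's lemma), which is the natural economy for a paper but not available to a blind prover.
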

\begin{proof}
	\begin{enumerate}
		\item[(i)] It follows from considering the trace of $\mathcal{L}(\Phi)$.
		\item[(ii)] Since all the eigenvalues are non-negative, it implies, $\sum_{i=2}^n \lambda_i \leq \sum_{i=1}^n \lambda_i = n$, which then implies $(n-1)\lambda_2 \leq n$.
		
		Let $\Phi$ be balanced. Then $\lambda_1=0$, implying $\sum_{i=2}^n \lambda_n \geq \sum_{i=1}^n \lambda_i = n$, which in turn implies $(n-1)\lambda_n \geq n$. Now if $G$ is complete, then by \cite[Lemma 1.7]{chung1996lectures}, $ \lambda_2 = \frac{n}{n-1}$. Conversely, suppose $\lambda_2 = \frac{n}{n-1}$. Then $\lambda_2 = \lambda_3 =\dots =\lambda_n = \frac{n}{n-1}$, and $\lambda_1 =0$. Thus, by Corollary \ref{corollary: singularity and balancedness}, $\Phi$ is balanced. In this case, the underlying graph $G$ is complete.
		
		\item[(iii)] Note that $n\lambda_1 \leq \sum_{i=1}^n \lambda_i = n$ and $n\lambda_n \geq \sum_{i=1}^n \lambda_i = n$. So $\lambda_1 \leq 1$ and $\lambda_n \geq 1.$ Assume that $\lambda_1=1$. Then $\lambda_1=\lambda_2= \cdots = \lambda_n=1$ since their sum equals to $n$. This implies that $\mathcal{L}(\Phi) = \mathbf{I}_n$ and $\mathbf{D}^{-\frac{1}{2}}(\Phi)\mathbf{A}(\Phi)\mathbf{D}^{-\frac{1}{2}}(\Phi) =0$. This is a contradiction to the fact that $\Phi$ is connected. Hence $\lambda_1<1$. Similarly, we have $\lambda_n>1$.
	\end{enumerate}
\end{proof}

\section{\textsf{Balancedness, bipartiteness and the eigenvalues of normalized gain Laplacian matrices of graphs }}\label{sec:bala-bipa-eig}
In this section we will establish the relationship between balancedness and bipartiteness of a gain graph and their connections with the spectra of gain normalized Laplacian.

\begin{theorem}\label{theorem: spec and (G,1)}
	Let $\Phi = (G, \varphi)$ be a connected $\mathbb{T}$-gain graph.  Then, $\spec(\mathcal{L}(\Phi)) = \spec(\mathcal{L}(G))$ if and only if $\Phi \sim (G,1)$.
\end{theorem}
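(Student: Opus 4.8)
The plan is to separate the two directions, with the forward implication immediate and the converse carrying all the weight. For the ``if'' direction, if $\Phi \sim (G,1)$ then, since $\mathcal{L}((G,1)) = \mathcal{L}(G)$, part (4) of Lemma \ref{lemma:spec-equ-adj-lap} gives $\spec(\mathcal{L}(\Phi)) = \spec(\mathcal{L}(G))$ at once. So the real content is the converse, and the whole proof below is devoted to it.

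For the converse I would first pass from the normalized Laplacian to the normalized adjacency matrix. Since $\mathcal{L}(\Phi) = \mathbf{I} - \mathcal{A}(\Phi)$ and $\mathcal{L}(G) = \mathbf{I} - \mathcal{A}(G)$, the map $\lambda \mapsto 1-\lambda$ shows that the hypothesis $\spec(\mathcal{L}(\Phi)) = \spec(\mathcal{L}(G))$ is equivalent to $\spec(\mathcal{A}(\Phi)) = \spec(\mathcal{A}(G))$. Next I would record that $\rho(\mathcal{A}(G)) = 1$: because $0$ is the least eigenvalue of $\mathcal{L}(G)$, the value $1$ is the largest eigenvalue of $\mathcal{A}(G)$, and since $G$ is connected the matrix $\mathcal{A}(G)$ is nonnegative and irreducible, so by Perron--Frobenius its spectral radius equals this largest eigenvalue, namely $1$. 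Equality of spectra then forces $\rho(\mathcal{A}(\Phi)) = 1$, and moreover $1 \in \spec(\mathcal{A}(\Phi))$, so that $1$ is a maximum-modulus eigenvalue of $\mathcal{A}(\Phi)$.

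The crux is an application of Theorem \ref{theorem: mat-sim-gain} with $\mathbf{A} = \mathcal{A}(G)$ and $\mathbf{B} = \mathcal{A}(\Phi)$. Here $|\mathcal{A}(\Phi)| = \mathcal{A}(G)$ because every gain has unit modulus, so $\mathbf{A} \geq |\mathbf{B}|$ holds, $\mathbf{A}$ is nonnegative and irreducible, and $\rho(\mathbf{A}) = \rho(\mathbf{B}) = 1$. Taking the maximum-modulus eigenvalue $\lambda = 1 = e^{i\cdot 0}\rho(\mathbf{B})$, the theorem produces a diagonal unitary matrix $\mathbf{D}_\zeta$ with $\mathcal{A}(\Phi) = \mathbf{D}_\zeta\, \mathcal{A}(G)\, \mathbf{D}_\zeta^{-1}$. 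To finish, I would translate this into switching equivalence: writing $\mathcal{A}(\Phi) = \mathbf{D}^{-\frac12}\mathbf{A}(\Phi)\mathbf{D}^{-\frac12}$ and $\mathcal{A}(G) = \mathbf{D}^{-\frac12}\mathbf{A}(G)\mathbf{D}^{-\frac12}$, and using that the diagonal matrices $\mathbf{D}_\zeta$ and $\mathbf{D}^{\pm\frac12}$ commute, the relation simplifies to $\mathbf{A}(\Phi) = \mathbf{D}_\zeta\, \mathbf{A}(G)\, \mathbf{D}_\zeta^{-1}$. By \eqref{eq: switching equi} (with switching matrix $\mathbf{D}_\zeta^{-1}$) this is precisely $\Phi \sim (G,1)$.

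The main obstacle lies in the converse, and specifically in verifying the hypotheses of Theorem \ref{theorem: mat-sim-gain} cleanly: identifying $\rho(\mathcal{A}(G)) = 1$ as a genuine Perron eigenvalue of an irreducible nonnegative matrix, and checking that it is $1$ rather than $-1$ that serves as the relevant maximum-modulus eigenvalue, so that the scalar phase is $e^{i\cdot 0} = 1$ and no extraneous global sign appears. One should also confirm that the conjugating matrix supplied by the theorem is unitary with unimodular diagonal entries, which is exactly what is needed for it to define a legitimate $\mathbb{T}$-valued switching function; the remaining commutation of diagonal matrices is routine.
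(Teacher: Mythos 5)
Your proof is correct, and it is a genuine streamlining of the paper's argument rather than a copy of it. Both proofs hinge on the same key tool, Theorem \ref{theorem: mat-sim-gain}, applied with $\mathbf{A} = \mathcal{A}(G)$ and $\mathbf{B} = \mathcal{A}(\Phi)$; the difference is which maximum-modulus eigenvalue gets fed into it. The paper applies the theorem with an unspecified maximum-modulus eigenvalue, so it only obtains $\mathcal{A}(\Phi) = e^{i\theta}\mathbf{D}_\zeta \mathcal{A}(G)\mathbf{D}_\zeta^{-1}$ with $\theta \in \{0,\pi\}$ (Hermiticity forces the phase to be $\pm 1$), and it must then dispose of the $\theta = \pi$ case --- the possibility that only $-\Phi$ is balanced --- by a separate argument combining Lemma \ref{lemma:spec-equ-adj-lap}, Lemma \ref{spec-sym-bip-nor-adj} (bipartiteness from spectral symmetry), and Theorem \ref{theorem:bipa-balan}. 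You instead observe that $1 = \rho(\mathcal{A}(G))$ lies in $\spec(\mathcal{A}(\Phi))$ precisely because the spectra coincide, so $\lambda = 1$ itself may be taken as the \emph{given} maximum-modulus eigenvalue; the theorem then delivers the phase $e^{i\cdot 0} = 1$ outright, and the entire two-case analysis with its bipartiteness machinery disappears. What the paper's longer route buys is that the dichotomy ``$\Phi$ balanced or $-\Phi$ balanced'' and its interaction with bipartiteness are made explicit, facts the paper reuses in nearby corollaries; what your route buys is brevity and fewer dependencies --- just Perron--Frobenius to identify $\rho(\mathcal{A}(G)) = 1$ as an attained eigenvalue, and the translation back to switching equivalence via \eqref{eq: switching equi}, both of which you execute correctly (the conjugating matrix supplied by the theorem is diagonal unitary, hence has unimodular diagonal entries and commutes with $\mathbf{D}^{\pm 1/2}$, so it is a legitimate switching matrix).
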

\begin{proof}
	If $\spec(\mathcal{L}(\Phi)) = \spec(\mathcal{L}(G))$, then, it is easy to see that,  $\spec(\mathcal{A}(\Phi)) = \spec(\mathcal{A}(G))$. Thus, by Lemma \ref{lemma: rho-adj-lap} and Theorem  \ref{theorem: mat-sim-gain},  $\mathcal{A}(\Phi) = e^{i\theta} \mathbf{D}_\zeta \mathcal{A}(G)\mathbf{D}_\zeta^{-1},$ where $\mathbf{D}_\zeta$ is a unitary diagonal matrix, and hence $\mathbf{A}(\Phi) =  e^{i \theta} \mathbf{D}_\zeta \mathbf{A}(G) \mathbf{D}_\zeta^{-1}$. As both the matrices $\mathbf{A}(\Phi)$ and  $\mathbf{A}(G)$ are symmetric, so $\theta$ is either $0$ or $\pi$.  That is, either $\Phi$ is balanced or $-\Phi$ is balanced. If $\Phi$ is balanced, then, by Theorem \ref{balan-equi-switch}, we are done. If $-\Phi$ is balanced, then, by Lemma \ref{lemma:spec-equ-adj-lap}, $\spec(\mathcal{A}(G)) =  \spec(-\mathcal{A}(\Phi))$.  By the assumption, we have $\spec(\mathcal{A}(G)) =  \spec(-\mathcal{A}(G))$. Thus, by Lemma \ref{spec-sym-bip-nor-adj}, $G$ is bipartite. hence, by Theorem \ref{theorem:bipa-balan}, $\Phi$ is balanced.
	
	Conversely, if $\Phi \sim (G,1)$,  then, by Lemma \ref{lemma:spec-equ-adj-lap}, $\spec(\mathcal{L}(\Phi)) = \spec(\mathcal{L}(G))$.
\end{proof}
\begin{remark}
	Theorem \ref{theorem: spec and (G,1)} shows that converse of Lemma \ref{lemma:spec-equ-adj-lap} is true in the case of $\Phi_1 \sim (G,1)$, but this may not be true in general. That is, if $\spec(\mathcal{L}(\Phi_1)) = \spec(\mathcal{L}(\Phi_2))$, then the $\Phi_1$ and $ \Phi_2$ need not be switching equivalent. The counterexample is illustrated below.
	
	\rm Consider two complete gain graphs on $3$ vertices with the following adjacency matrices: $$\mathbf{A}(\Phi_1) = \begin{bmatrix} 0 & \text{i} & \frac{1+\text{i}}{\sqrt{2}} \\ -\text{i} & 0 & -\text{i} \\ \frac{1-\text{i}}{\sqrt{2}} & \text{i} & 0 \end{bmatrix} \quad \text{and}\quad \mathbf{A}(\Phi_2) = \begin{bmatrix} 0 & -\frac{1+\text{i}}{\sqrt{2}} & \text{i} \\ -\frac{1-\text{i}}{\sqrt{2}} & 0 & -\text{i} \\ -\text{i} & \text{i} & 0 \end{bmatrix}.$$
	Then, it is easy to check that $\spec(\mathbf{A}(\Phi_1)) = \spec(\mathbf{A}(\Phi_2))$, which implies that corresponding gain adjacency matrices are unitarily similar, i.e.,
	\begin{align}\label{eq:unitarily-similar}
		\mathbf{A}(\Phi_2) = \mathbf{U}\mathbf{A}(\Phi_1)\mathbf{U}^*
	\end{align}and they are related by the following unitary matrix relation:
	\begin{align*}
		\begin{bmatrix} 1 & 0 & 0 \\ 0 & 0 & -1 \\ 0 & 1 & 0 \end{bmatrix}
		\begin{bmatrix} 0 & \text{i} & \frac{1+\text{i}}{\sqrt{2}} \\ -\text{i} & 0 & -\text{i} \\ \frac{1-\text{i}}{\sqrt{2}} & \text{i} & 0 \end{bmatrix}
		\begin{bmatrix} 1 & 0 & 0 \\ 0 & 0 & 1 \\ 0 & -1 & 0 \end{bmatrix} = \begin{bmatrix} 0 & -\frac{1+\text{i}}{\sqrt{2}} & \text{i} \\ -\frac{1-\text{i}}{\sqrt{2}} & 0 & -\text{i} \\ -\text{i} & \text{i} & 0 \end{bmatrix}
	\end{align*}Now, suppose that there exists an unitary diagonal matrix $\mathbf{D} = \diag(d_1, d_2, d_3)$, such that Eq. \eqref{eq:unitarily-similar} holds. After putting $\mathbf{D}$ in Eq. \eqref{eq:unitarily-similar} and comparing coefficients, we get $$\begin{cases} d_1 \overline{d_2} = \frac{\text{i}-1}{\sqrt{2}} \\ d_1 \overline{d_3} = \frac{1+\text{i}}{\sqrt{2}} \\ d_2 \overline{d_3} =1 \end{cases},\quad \text{which, after eliminating}\: d_1,d_2\: \text{and}\: d_3, \:\: \text{gives}\: 1=-1,$$ a contradiction.
	Thus, if $\spec(\mathcal{L}(\Phi_1)) = \spec(\mathcal{L}(\Phi_2))$, then $\Phi_1$ and $\Phi_2$ need not be swithcing equivalent. Also, it is known that, if $\Phi_1 \sim \Phi_2$ then $\spec(\mathbf{A}(\Phi_1)) = \spec(\mathbf{A}(\Phi_2))$. The converse of this result was not considered in the literature. The above example shows that, the converse need not be true.
\end{remark}
It is known that, the Laplacian matrix of a gain graph is singular if and only if the gain is balanced \cite{wang2018gainlap}. Next result is a counterpart for the gain normalized Laplacian. The proof uses the Perron-Frobenius theorem, and also gives an alternate proof for the gain Laplacian case.
\begin{theorem}\label{theorem: nonsingular and balance}
	Let  $\Phi$ be a connected gain graph. Then,   $\mathcal{L}(\Phi)$ is nonsingular if and only if $\Phi$ is an unbalanced connected gain graph.
\end{theorem}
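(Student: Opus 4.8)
The plan is to exploit the identity $\mathcal{L}(\Phi) = \mathbf{I} - \mathcal{A}(\Phi)$, so that $\mathcal{L}(\Phi)$ is singular precisely when $1 \in \spec(\mathcal{A}(\Phi))$; proving the equivalent statement ``$\mathcal{L}(\Phi)$ is singular if and only if $\Phi$ is balanced'' then establishes the theorem. The main lever is the Perron--Frobenius machinery of Theorems \ref{theorem: rho} and \ref{theorem: mat-sim-gain}. First I would record the baseline facts about the underlying graph: since $G$ is connected, $\mathcal{A}(G) = \mathbf{D}^{-\frac12}\mathbf{A}(G)\mathbf{D}^{-\frac12}$ is nonnegative and irreducible, and the positive vector $\mathbf{D}^{\frac12}\mathbf{1}$ satisfies $\mathcal{A}(G)\,\mathbf{D}^{\frac12}\mathbf{1} = \mathbf{D}^{\frac12}\mathbf{1}$; hence by Perron--Frobenius $\rho(\mathcal{A}(G)) = 1$. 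Combining this with Lemma \ref{lemma: rho-adj-lap}(ii) gives $\rho(\mathcal{A}(\Phi)) \le \rho(\mathcal{A}(G)) = 1$, while Theorem \ref{theorem:0-eig-2} confirms $\spec(\mathcal{A}(\Phi)) \subseteq [-1,1]$.

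For the easy direction, in contrapositive form ``balanced $\Rightarrow$ singular'', suppose $\Phi$ is balanced. By Theorem \ref{balan-equi-switch}, $\Phi \sim (G,1)$, and then Lemma \ref{lemma:spec-equ-adj-lap}(4) yields $\spec(\mathcal{L}(\Phi)) = \spec(\mathcal{L}(G))$. Since $\mathcal{L}(G)$ is the ordinary normalized Laplacian of a connected graph it has $0$ as an eigenvalue, so $0 \in \spec(\mathcal{L}(\Phi))$ and $\mathcal{L}(\Phi)$ is singular.

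The substantive direction is ``$\mathcal{L}(\Phi)$ singular $\Rightarrow$ $\Phi$ balanced''. If $\mathcal{L}(\Phi)$ is singular then $1$ is an eigenvalue of $\mathcal{A}(\Phi)$; since every eigenvalue of $\mathcal{A}(\Phi)$ lies in $[-1,1]$, this forces $\rho(\mathcal{A}(\Phi)) = 1 = \rho(\mathcal{A}(G))$, with $\lambda = 1 = e^{i\cdot 0}\rho(\mathcal{A}(\Phi))$ a maximum-modulus eigenvalue. I would then apply Theorem \ref{theorem: mat-sim-gain} with $\mathbf{A} = \mathcal{A}(G)$ (nonnegative and irreducible) and $\mathbf{B} = \mathcal{A}(\Phi)$, using $\mathcal{A}(G) \ge |\mathcal{A}(\Phi)|$: because the two spectral radii coincide, there is a diagonal unitary $\mathbf{D}_\zeta$ with $\mathcal{A}(\Phi) = \mathbf{D}_\zeta \mathcal{A}(G)\mathbf{D}_\zeta^{-1}$, the phase factor being $e^{i\cdot 0} = 1$ since the relevant eigenvalue is the positive number $1$. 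Conjugating by $\mathbf{D}^{\frac12}$ and using that diagonal matrices commute converts this into $\mathbf{A}(\Phi) = \mathbf{D}_\zeta \mathbf{A}(G)\mathbf{D}_\zeta^{-1}$, i.e. $\Phi \sim (G,1)$; by Theorem \ref{balan-equi-switch}, $\Phi$ is balanced.

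I expect the main obstacle to be the careful invocation of Theorem \ref{theorem: mat-sim-gain}: one must verify that $1$ really is a maximum-modulus eigenvalue with trivial phase, which is exactly where the two-sided bound $\spec(\mathcal{A}(\Phi)) \subseteq [-1,1]$ and the equality $\rho(\mathcal{A}(\Phi)) = \rho(\mathcal{A}(G)) = 1$ are both essential, and then to transport the resulting diagonal-unitary similarity of normalized adjacency matrices back to an honest switching equivalence of the gain adjacency matrices. The commutation of $\mathbf{D}_\zeta$ with $\mathbf{D}^{\pm\frac12}$ makes this last step clean, but it is the logical heart of the argument. The same similarity also recovers the analogous statement for the combinatorial gain Laplacian, as remarked just before the theorem.
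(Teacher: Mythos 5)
Your proposal is correct and follows essentially the same route as the paper's own proof: balancedness gives a diagonal-unitary similarity between $\mathcal{A}(\Phi)$ and $\mathcal{A}(G)$ (hence singularity of $\mathcal{L}(\Phi)$), and conversely singularity puts $1 \in \spec(\mathcal{A}(\Phi)) \subseteq [-1,1]$, so Theorem \ref{theorem: mat-sim-gain} yields the switching equivalence $\Phi \sim (G,1)$. In fact you fill in two details the paper leaves implicit — verifying $\rho(\mathcal{A}(G)) = 1$ via the Perron vector $\mathbf{D}^{\frac12}\mathbf{1}$, and transporting the similarity of normalized matrices back to $\mathbf{A}(\Phi) = \mathbf{D}_\zeta \mathbf{A}(G)\mathbf{D}_\zeta^{-1}$ using commutativity of diagonal matrices.
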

\begin{proof}
	If $\Phi$ is balanced, then $\mathbf{A}(G) = \mathbf{D}_\zeta^{-1}\mathbf{A}(\Phi)\mathbf{D}_\zeta$, where $\mathbf{D}_\zeta$ is an unitary diagonal matrix. Thus, $\mathbf{D}^{-\frac{1}{2}} \mathbf{A}(G) \mathbf{D}^{-\frac{1}{2}} = \mathbf{D}^{-\frac{1}{2}} \mathbf{D}_\zeta^{-1}\mathbf{A}(\Phi) \mathbf{D}_\zeta \mathbf{D}^{-\frac{1}{2}} = \mathbf{D}_\zeta^{-1} \mathbf{D}^{-\frac{1}{2}} \mathbf{A}(\Phi) \mathbf{D}^{-\frac{1}{2}}\mathbf{D}_\zeta$. Thus $\mathcal{A}(G)$ and $\mathcal{A}(\Phi)$ are similar, hence $\mathcal{L}(G)$ and $\mathcal{L}(\Phi)$ are similar. So, if $\Phi$ is balanced, then $\mathcal{L}(\Phi)$ is singular.
	
	We have $\mathcal{L}(\Phi) =  \mathbf{I} - \mathbf{D}^{-\frac{1}{2}} \mathbf{A}(\Phi) \mathbf{D}^{-\frac{1}{2}}.$ If $\lambda$ is an eigenvalue of $\mathcal{L}(\Phi)$, then $(\mathbf{I} - \mathbf{D}^{-\frac{1}{2}} \mathbf{A}(\Phi) \mathbf{D}^{-\frac{1}{2}}) \mathbf{x} = \lambda  \mathbf{x}$, for some non-zero vector $\mathbf{x}$. Thus, $\mathbf{D}^{-\frac{1}{2}} \mathbf{A}(\Phi) \mathbf{D}^{-\frac{1}{2}} \mathbf{x} = (1 - \lambda) \mathbf{x}$. So $\spec(\mathbf{D}^{-\frac{1}{2}} \mathbf{A}(\Phi) \mathbf{D}^{-\frac{1}{2}}) \subseteq [-1, 1]$. If $0$ is an eigenvalue of $\mathcal{L}(\Phi)$, then $1$ is an eigenvalue of $\mathcal{A}(\Phi)$. So, by Theorem \ref{theorem: mat-sim-gain}, $\mathcal{A}(\Phi) = \mathbf{D}\mathcal{A}(G)\mathbf{D}^{-1}$, and hence $\Phi$ is balanced.
\end{proof}
The following corollary is pivotal in proving several important results prevailing in the spectral theory for gain normalized Laplacian.
\begin{corollary}\label{corollary: singularity and balancedness}
	For a connected $\mathbb{T}$-gain graph $\Phi$, the following statements are equivalent.
	\begin{enumerate}
		\item[(i)] $\mathcal{L}(\Phi)$ is singular.
		\item[(ii)] $0$ is a simple eigenvalue of $\mathcal{L}(\Phi)$.
		\item[(iii)] $\Phi \sim (G,1)$.
	\end{enumerate}
\end{corollary}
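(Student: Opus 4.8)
The plan is to close the cycle of implications $(iii) \Rightarrow (ii) \Rightarrow (i) \Rightarrow (iii)$, most of which can be assembled directly from results already in hand. The easy direction $(ii) \Rightarrow (i)$ is immediate: a simple zero eigenvalue is in particular an eigenvalue, so $\mathcal{L}(\Phi)$ is singular.

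For $(i) \Rightarrow (iii)$ I would invoke Theorem \ref{theorem: nonsingular and balance}: if $\mathcal{L}(\Phi)$ is singular then $\Phi$ cannot be unbalanced, hence $\Phi$ is balanced; and by Theorem \ref{balan-equi-switch} balancedness is equivalent to $\Phi \sim (G,1)$, which is exactly $(iii)$. So the first and third conditions are tied together purely by the balancedness machinery already proved.

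The substantive implication is $(iii) \Rightarrow (ii)$. Assuming $\Phi \sim (G,1)$, the switching relation produces a diagonal unitary matrix $\mathbf{D}_\zeta$ with $\mathcal{L}(\Phi) = \mathbf{D}_\zeta^{-1}\mathcal{L}(G)\mathbf{D}_\zeta$, precisely the similarity established in the proof of Lemma \ref{lemma:spec-equ-adj-lap}. Similar matrices share eigenvalue multiplicities, so it suffices to show that $0$ is a simple eigenvalue of the ordinary normalized Laplacian $\mathcal{L}(G)$ of the connected graph $G$. Here I would use the quadratic form of Lemma \ref{lemma:normalized-quadratic-form} with all gains equal to $1$: a null vector $\mathbf{x}$ satisfies $\sum_{v_i \sim v_j}|x_i/\sqrt{d_i} - x_j/\sqrt{d_j}|^2 = 0$, which forces $x_i/\sqrt{d_i} = x_j/\sqrt{d_j}$ across every edge. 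Connectedness then propagates this equality to all vertices, so the null space is one-dimensional, spanned by the vector with $x_i = \sqrt{d_i}$; hence $0$ is simple. (Equivalently, one may apply the Perron--Frobenius theorem to the nonnegative irreducible matrix $\mathcal{A}(G)$, whose spectral radius equals $1$ and is therefore a simple eigenvalue.)

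The only real obstacle is this simplicity claim for $\mathcal{L}(G)$; everything else is bookkeeping over previously proved statements. The delicate point inside the quadratic-form argument is the connectivity-propagation step, since that is exactly where the hypothesis that $G$ is connected must be used to upgrade edgewise equality to global constancy of $x_i/\sqrt{d_i}$.
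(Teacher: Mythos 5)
Your proposal is correct, and its skeleton is the same as the paper's: the equivalence of (i) and (iii) is exactly the paper's appeal to Theorem \ref{theorem: nonsingular and balance} combined with Theorem \ref{balan-equi-switch}, and (iii) $\Rightarrow$ (ii) is handled, as in the paper, by the diagonal-unitary similarity $\mathcal{L}(\Phi)=\mathbf{D}_{\zeta}^{-1}\mathcal{L}(G)\mathbf{D}_{\zeta}$, reducing everything to the simplicity of $0$ as an eigenvalue of $\mathcal{L}(G)$ for the connected underlying graph $G$. Where you genuinely diverge is in how that last fact is justified. The paper cites Theorem \ref{theorem:connectedness-laplacian}, which in fact nowhere asserts simplicity of $0$ (its parts give the trace identity, the bounds on $\lambda_2$ and $\lambda_n$, and $\lambda_1<1<\lambda_n$); the fact actually needed is the classical statement from Chung's book (Lemma 1.7 there, invoked elsewhere in the paper) that the multiplicity of the zero eigenvalue of the normalized Laplacian equals the number of connected components. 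You instead prove this directly: a null vector annihilates the quadratic form $\sum_{v_i\sim v_j}\left|x_i/\sqrt{d_i}-x_j/\sqrt{d_j}\right|^2$ of Lemma \ref{lemma:normalized-quadratic-form}, edgewise equality plus connectedness forces $x_i/\sqrt{d_i}$ to be globally constant, and hence the kernel is spanned by $(\sqrt{d_1},\ldots,\sqrt{d_n})^{\top}$; your Perron--Frobenius alternative via the irreducible nonnegative matrix $\mathcal{A}(G)$ works equally well. This buys self-containedness and, in effect, repairs the paper's loose citation at precisely the one point where its proof is thinnest; otherwise the two arguments are interchangeable.
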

\begin{proof}
	Proof follows  from Theorem  \ref{theorem: nonsingular and balance}. Note that, $0$ is a \textit{simple} eigenvalue of $\mathcal{L}(\Phi)$ because, by statement $(iii)$, we have $\spec(\mathcal{L}(\Phi)) = \spec(\mathcal{L}(G))$, then statement $(ii)$ follows from Theorem \ref{theorem:connectedness-laplacian} as $\Phi$ is connected.
\end{proof}

The next theorem provides a connection between the spectrum of $\mathcal{L}(\Phi)$ and $\mathcal{L}(-\Phi)$ leading to the corollaries that characterize the bipartiteness in terms of the balancedness.

\begin{theorem}\label{observation:eigenvalue of -Phi}
	Let $0 \leq \lambda_1 \leq \dots \leq \lambda_n$ be the eigenvalues of $\mathcal{L}(\Phi)$. If $0 \leq \alpha_1 \leq \alpha_2 \leq \ldots \leq \alpha_n \leq 2$ are the eigenvalues  of $\mathcal{L}(-\Phi)$, then
	
	\begin{enumerate}
		\item[(i)] $\alpha_i = 2-\lambda_{n-\text{i}+1}$.
		\item[(ii)] $\spec(\mathcal{L}(\Phi)) = \spec(\mathcal{L}(-\Phi))$ if and only if all eigenvalues of $\mathcal{L}(\Phi)$ (resp. $\mathcal{L}(-\Phi)$) are symmetric about $1$ (including multiplicities), i.e., for each eigenvalue $\lambda_i, 2-\lambda_i$ is also an eigenvalue of $\mathcal{L}(\Phi)$ (resp. $\mathcal{L}(-\Phi)$).
	\end{enumerate}
\end{theorem}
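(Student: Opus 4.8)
The plan is to reduce the entire theorem to a single matrix identity relating $\mathcal{L}(\Phi)$ and $\mathcal{L}(-\Phi)$, after which everything is routine linear algebra. Negating a gain graph replaces each edge gain $g$ by $-g$ (which again lies in $\mathbb{T}$), while leaving the underlying graph $G$, and hence the degree matrix $\mathbf{D}$, untouched. Reading off the entrywise definition, every off-diagonal entry of $\mathbf{A}(\Phi)$ simply flips sign and every zero entry stays zero, so $\mathbf{A}(-\Phi) = -\mathbf{A}(\Phi)$. Consequently $\mathcal{A}(-\Phi) = \mathbf{D}^{-\frac{1}{2}}\mathbf{A}(-\Phi)\mathbf{D}^{-\frac{1}{2}} = -\mathcal{A}(\Phi)$, and using $\mathcal{L}(\Phi) = \mathbf{I} - \mathcal{A}(\Phi)$ I would conclude
$$\mathcal{L}(-\Phi) = \mathbf{I} - \mathcal{A}(-\Phi) = \mathbf{I} + \mathcal{A}(\Phi) = 2\mathbf{I} - \mathcal{L}(\Phi).$$

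For part (i), I would observe that if $\mathbf{x}$ is an eigenvector of $\mathcal{L}(\Phi)$ with eigenvalue $\lambda$, then $\mathcal{L}(-\Phi)\mathbf{x} = (2\mathbf{I} - \mathcal{L}(\Phi))\mathbf{x} = (2-\lambda)\mathbf{x}$. Thus the two matrices share eigenvectors and the eigenvalues of $\mathcal{L}(-\Phi)$ are exactly $\{2-\lambda_i\}$, counted with the same multiplicities. Since $\lambda_1 \leq \dots \leq \lambda_n$ forces $2-\lambda_n \leq \dots \leq 2-\lambda_1$, re-indexing in increasing order gives $\alpha_i = 2 - \lambda_{n-i+1}$, which is exactly (i); note Theorem \ref{theorem:0-eig-2} guarantees each $\alpha_i \in [0,2]$, consistent with the hypothesis.

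For part (ii), I would read $\spec(\mathcal{L}(-\Phi)) = \{2-\lambda_i\}$ (as a multiset) directly from (i). Then $\spec(\mathcal{L}(\Phi)) = \spec(\mathcal{L}(-\Phi))$ holds precisely when the multiset $\{\lambda_i\}$ equals the multiset $\{2-\lambda_i\}$, which is exactly the assertion that the spectrum of $\mathcal{L}(\Phi)$ is symmetric about $1$ with multiplicities, i.e. $2-\lambda$ is an eigenvalue of the same multiplicity whenever $\lambda$ is. The ``resp.'' clause for $\mathcal{L}(-\Phi)$ follows from the symmetry of the identity, since $\mathcal{L}(\Phi) = 2\mathbf{I} - \mathcal{L}(-\Phi)$ as well: symmetry of $\{\alpha_i\}$ about $1$ is equivalent to symmetry of $\{2-\lambda_i\}$ about $1$, hence to symmetry of $\{\lambda_i\}$ about $1$, so both formulations are equivalent to the spectral equality.

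I do not expect a serious obstacle here, as the genuine content is the single identity $\mathcal{L}(-\Phi) = 2\mathbf{I} - \mathcal{L}(\Phi)$; the only point demanding care is justifying $\mathbf{A}(-\Phi) = -\mathbf{A}(\Phi)$ cleanly from the entrywise definition (checking that negation flips all off-diagonal signs but alters neither the diagonal nor $\mathbf{D}$). Once that is settled, the eigenvalue bookkeeping in (i) and the multiset comparison in (ii) are immediate.
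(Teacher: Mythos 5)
Your proof is correct and follows essentially the same route as the paper: the paper's entire argument for (i) is the identity $\mathcal{L}(\Phi)+\mathcal{L}(-\Phi)=2\mathbf{I}$ (your $\mathcal{L}(-\Phi)=2\mathbf{I}-\mathcal{L}(\Phi)$, which you derive in more detail from $\mathbf{A}(-\Phi)=-\mathbf{A}(\Phi)$), and its proof of (ii) is the same multiset/index bookkeeping you carry out. You simply spell out the eigenvector argument and the re-indexing more explicitly than the paper does.
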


\begin{proof}
	\begin{enumerate}
		\item[(i)] Because $\mathcal{L}(\Phi)+\mathcal{L}(-\Phi)=2\mathbf{I}$.
		\item[(ii)] If $\spec(\mathcal{L}(\Phi)) = \spec(\mathcal{L}(-\Phi))$, then $\lambda_i=\alpha_i =2-\lambda_{n-\text{i}+1}$, and $\alpha_i = \lambda_i =  2-\alpha_{{n-\text{i}+1}}$.
		
		Conversely, suppose $\lambda_i=2-\lambda_{n-\text{i}+1}$ (resp. $\alpha_i=2-\alpha_{n-\text{i}+1}$). Since $\alpha_i=2-\lambda_{n-\text{i}+1}$ (resp. $\lambda_i=2-\alpha_{n-\text{i}+1}$), it implies that $\alpha_i=\lambda_i$, which completes the proof.
	\end{enumerate}
\end{proof}

\begin{remark}\label{observation:phi-balanced-2-eigenvalue}Let $\Phi = (G, \varphi)$ be a connected $\mathbb{T}$-gain graph. Then, it is easy to see that, $\rho(\mathcal{L}(-\Phi)) = \rho(\mathcal{L}(-G)) = 2$ if and only if $\Phi$ is balanced. Also,  $\rho(\mathcal{L}(\Phi)) = \rho(\mathcal{L}(-G))$ if and only if $-\Phi$ is balanced.
\end{remark}

\begin{conjecture}
	Let $\Phi = (G, \varphi)$ be a connected $\mathbb{T}$-gain graph. Then, $\rho(\mathcal{L}(\Phi)) = \rho(\mathcal{L}(G))$ implies  $\Phi$ is balanced. 
\end{conjecture}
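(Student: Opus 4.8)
The plan is to convert the spectral-radius hypothesis into a statement about a single extreme eigenvalue of the gain normalized adjacency matrix. Writing $\mathcal{L}(\Phi) = \mathbf{I} - \mathcal{A}(\Phi)$ with $\mathcal{A}(\Phi) = \mathbf{D}^{-1/2}\mathbf{A}(\Phi)\mathbf{D}^{-1/2}$, Theorem \ref{theorem:0-eig-2} gives $\spec(\mathcal{L}(\Phi)) \subseteq [0,2]$, so $\rho(\mathcal{L}(\Phi)) = \lambda_n = 1 - \mu$, where $\mu = \min \spec(\mathcal{A}(\Phi))$. Thus the hypothesis $\rho(\mathcal{L}(\Phi)) = \rho(\mathcal{L}(G))$ is equivalent to $\min\spec(\mathcal{A}(\Phi)) = \min\spec(\mathcal{A}(G))$, i.e.\ to the assertion that the gains do not lower the smallest normalized-adjacency eigenvalue. (The reverse implication is the easy one: if $\Phi$ is balanced, then by Theorem \ref{balan-equi-switch} and Lemma \ref{lemma:spec-equ-adj-lap} the two matrices are similar and the spectra agree.) I would then split on whether $G$ is bipartite.

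For bipartite $G$, \cite[Lemma 1.7]{chung1996lectures} gives $\rho(\mathcal{L}(G)) = 2$, so the hypothesis becomes $\rho(\mathcal{L}(\Phi)) = 2 = \rho(\mathcal{L}(-G))$; by Remark \ref{observation:phi-balanced-2-eigenvalue} this forces $-\Phi$ to be balanced, and Theorem \ref{theorem:bipa-balan}(i) applied to the gain $-\Phi$ then yields that $\Phi = -(-\Phi)$ is balanced. Equivalently, the part-swapping switching $\mathbf{D}_\epsilon = \diag(\pm 1)$ gives $\mathbf{D}_\epsilon^{-1}\mathcal{A}(\Phi)\mathbf{D}_\epsilon = -\mathcal{A}(\Phi)$, so $\spec(\mathcal{A}(\Phi))$ is symmetric about $0$ and $\min\spec(\mathcal{A}(\Phi)) = -\rho(\mathcal{A}(\Phi))$; the hypothesis forces $\rho(\mathcal{A}(\Phi)) = \rho(\mathcal{A}(G))$, and Theorem \ref{theorem: mat-sim-gain} produces the switching realizing balancedness.

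The difficult case is $G$ connected and non-bipartite, where $\min\spec(\mathcal{A}(G)) > -1$ and hence $\rho(\mathcal{L}(G)) < 2$. Here the hypothesis controls $\min\spec(\mathcal{A}(\Phi)) = -\max\spec(\mathcal{A}(-\Phi))$, and this is exactly where the rigidity tool breaks down: since $\max\spec(\mathcal{A}(-G)) = -\min\spec(\mathcal{A}(G)) < 1 = \rho(\mathcal{A}(-G))$, the eigenvalue being tracked is not a maximum-modulus (Perron) eigenvalue, so Theorem \ref{theorem: mat-sim-gain} cannot be invoked to manufacture the diagonal unitary conjugation; indeed $\rho(\mathcal{A}(-\Phi)) = \max\{\,\max\spec(\mathcal{A}(-\Phi)),\, \max\spec(\mathcal{A}(\Phi))\,\}$ is typically attained in the genuine Perron direction, not the one we need. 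The route I would attempt is to prove the sharper extremal inequality $\min\spec(\mathcal{A}(\Phi)) \le \min\spec(\mathcal{A}(G))$ for connected non-bipartite $G$, with equality only when $\Phi$ is balanced, by Cauchy interlacing against an odd cycle $C$ of $G$, using that destroying the neutrality of a $\mathbb{T}$-gain cycle strictly lowers its smallest eigenvalue and then transferring this locally computed drop through the edge-interlacing results of Section \ref{sec:eig-inter}.

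The main obstacle I anticipate is the degree-dependent normalization. The principal submatrix of $\mathcal{A}(\Phi)$ indexed by the vertices of a cycle $C$ is not the normalized adjacency matrix of the gain cycle $\Phi|_C$, because its denominators involve the degrees of the vertices in $G$ rather than in $C$; consequently interlacing does not cleanly reduce the global smallest eigenvalue to a local one, and a non-neutral gain carried by an even cycle far from the odd part of $G$ need not visibly pull $\min\spec(\mathcal{A}(\Phi))$ below $\min\spec(\mathcal{A}(G))$. For this reason I would first stress-test the statement on the graph formed by a triangle sharing a single vertex with a square, carrying a balanced gain on the triangle and a non-neutral gain on the square: if $\min\spec(\mathcal{A}(\Phi))$ stays equal to $\min\spec(\mathcal{A}(G))$ there, the conjecture is false as stated and should be restricted to regular (or vertex-transitive) $G$, where the normalization is a global scalar and the interlacing argument goes through, or to gains whose every unbalanced cycle meets an odd cycle.
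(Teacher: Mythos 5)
You were asked to prove something that the paper itself does not prove: this statement appears in the paper as a \emph{conjecture}, and the only thing the authors establish is the bipartite case, in the theorem immediately following it (``If $G$ is bipartite, then $\rho(\mathcal{L}(\Phi)) = \rho(\mathcal{L}(G))$ implies $\spec(\mathcal{L}(G)) = \spec(\mathcal{L}(\Phi))$''). So there is no proof in the paper to compare yours against, and your proposal --- which handles bipartite $G$ and then candidly stops --- does not close the gap either; the non-bipartite case remains open for you exactly as it does for the authors.

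What you do have is correct and matches the paper's partial result. Your bipartite argument ($\rho(\mathcal{L}(G))=2$ forces $\rho(\mathcal{L}(\Phi))=2$, hence $-\Phi$ is balanced by Remark \ref{observation:phi-balanced-2-eigenvalue}, hence $\Phi$ is balanced by Theorem \ref{theorem:bipa-balan} since $G$ is bipartite) is essentially the paper's own proof of its bipartite theorem; in fact you are slightly more careful, since the paper's proof passes directly from $\rho(\mathcal{L}(\Phi))=2$ to ``$\Phi$ is balanced,'' while the cited remark literally yields that $-\Phi$ is balanced and bipartiteness is needed to convert. Your diagnosis of the obstruction in the non-bipartite case is also the right one: Theorem \ref{theorem: mat-sim-gain} is a rigidity statement about \emph{maximum-modulus} eigenvalues, and when $G$ is not bipartite the eigenvalue controlled by the hypothesis, $\min\spec(\mathcal{A}(\Phi)) = 1-\rho(\mathcal{L}(\Phi))$, has modulus strictly less than $\rho(\mathcal{A}(G))=1$, so the Perron--Frobenius equality case gives nothing --- this is precisely why the statement is a conjecture rather than a theorem. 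Two cautions on your proposed next steps: the interlacing route founders on the defect you yourself identify (a principal submatrix of $\mathcal{A}(\Phi)$ is not the normalized adjacency matrix of the induced sub-gain-graph, because the degrees are inherited from $G$); and your triangle-plus-square stress test can only refute the conjecture if the spectral radii are \emph{exactly} equal, not merely if the interlacing argument fails to detect a drop --- a generic non-neutral gain on the square will perturb $\lambda_n$, so that test requires explicit computation, and a negative outcome there would be evidence for the conjecture, not against your method.
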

We note that the converse true from Theorem \ref{theorem: spec and (G,1)}.

In the following theorem, we show that the above conjecture holds for bipartite graphs.
\begin{theorem}
	If $G$ is bipartite, then $\rho(\mathcal{L}(\Phi)) = \rho(\mathcal{L}(G))$ implies $\spec(\mathcal{L}(G)) = \spec(\mathcal{L}(\Phi))$ for every gain $\varphi$.
\end{theorem}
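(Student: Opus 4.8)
The plan is to reduce the hypothesis $\rho(\mathcal{L}(\Phi)) = \rho(\mathcal{L}(G))$ to a statement about balancedness, and then exploit the bipartiteness of $G$ to transfer balancedness from $-\Phi$ to $\Phi$. Throughout I assume, as elsewhere in the paper, that $\Phi$ (equivalently $G$) is connected.

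First I would pin down the value of $\rho(\mathcal{L}(G))$ when $G$ is bipartite. By Theorem \ref{theorem:0-eig-2} every eigenvalue of $\mathcal{L}(G)$ lies in $[0,2]$, so $\rho(\mathcal{L}(G)) \le 2$. Since $G$ is connected and bipartite, $2$ is actually an eigenvalue of $\mathcal{L}(G)$ by \cite[Lemma 1.7]{chung1996lectures}, the same characterization already used in Lemma \ref{spec-sym-bip-nor-adj}. Hence $\rho(\mathcal{L}(G)) = 2$. Combined with the hypothesis and with $\rho(\mathcal{L}(-G)) = 2$ from Lemma \ref{lemma: rho-adj-lap}(iii), this gives $\rho(\mathcal{L}(\Phi)) = 2 = \rho(\mathcal{L}(-G))$.

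Next I would apply Remark \ref{observation:phi-balanced-2-eigenvalue}: the equality $\rho(\mathcal{L}(\Phi)) = \rho(\mathcal{L}(-G))$ holds if and only if $-\Phi$ is balanced. So from the previous step $-\Phi$ is balanced. Now the bipartiteness of $G$ enters decisively. By Theorem \ref{theorem:bipa-balan}(i), for bipartite $G$ the balancedness of a gain is inherited by its negative; applying this with the gain $-\Phi$ shows that $-(-\Phi) = \Phi$ is balanced.

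Finally, once $\Phi$ is balanced, Theorem \ref{balan-equi-switch} gives $\Phi \sim (G,1)$, and Lemma \ref{lemma:spec-equ-adj-lap}(4) yields $\spec(\mathcal{L}(\Phi)) = \spec(\mathcal{L}(G))$, which is the desired conclusion (and in particular it settles the conjecture in the bipartite case). The only delicate point is the very first step: one must know that for a connected bipartite graph the top eigenvalue of $\mathcal{L}(G)$ equals $2$, i.e. the bipartiteness-to-eigenvalue-$2$ characterization; everything after that is a chain of cited equivalences. An alternative to the last two steps, avoiding Theorem \ref{theorem:bipa-balan}, would be to observe that for bipartite $G$ the spectrum $\spec(\mathcal{L}(G))$ is symmetric about $1$, combine $\spec(\mathcal{L}(-\Phi)) = \spec(\mathcal{L}(G))$ (from $-\Phi$ balanced, via Theorem \ref{balan-equi-switch} and Lemma \ref{lemma:spec-equ-adj-lap}) with the relation $\alpha_i = 2 - \lambda_{n-i+1}$ of Theorem \ref{observation:eigenvalue of -Phi}(i), and conclude $\spec(\mathcal{L}(\Phi)) = \spec(\mathcal{L}(G))$ directly.
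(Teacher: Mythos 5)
Your proposal is correct and follows essentially the same route as the paper: pin down $\rho(\mathcal{L}(G))=2$ from bipartiteness, invoke Remark \ref{observation:phi-balanced-2-eigenvalue} to get balancedness, and conclude via switching equivalence. In fact your version is slightly more careful than the paper's, which loosely states that the Remark yields ``$\Phi$ is balanced'' when it literally yields that $-\Phi$ is balanced; your explicit extra step through Theorem \ref{theorem:bipa-balan}(i) to transfer balancedness from $-\Phi$ to $\Phi$ is exactly what is needed to make that citation rigorous.
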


\begin{proof}
	Let $G$ be a bipartite graph, and $\Phi$ be such that $\rho(\mathcal{L}(\Phi)) = \rho(\mathcal{L}(G))$. Then, beacuse $G$ is bipartite, we have $\rho(\mathcal{L}(G))=2$, implying $\rho(\mathcal{L}(\Phi))=2$, which by Remark  \ref{observation:phi-balanced-2-eigenvalue} implies $\Phi$ is balanced and hence, by using Theorem \ref{theorem: spec and (G,1)}, we have $\spec(\mathcal{L}(G)) = \spec(\mathcal{L}(\Phi))$.
\end{proof}

The following corollaries characterize the bipartiteness in terms of the balancedness, spectrum, and spectral radius of the gain normalized Laplacian matrix.

\begin{corollary}\label{theorem: bipartite and balance}
	Let  $\Phi$ be a connected gain graph. Then the following holds:
	\begin{enumerate}
		\item[(i)] If $\Phi$ is a balanced bipartite graph,  then $2$ is an eigenvalue of $\mathcal{L}(\Phi)$.
		\item[(ii)] If $\Phi$ is balanced and $2$ is an eigenvalue of $\mathcal{L}(\Phi)$, then $\Phi$ is bipartite.
		\item[(iii)] If $\Phi$ is bipartite and $2$ is an eigenvalue of $\mathcal{L}(\Phi)$, then $\Phi$ is balanced.
	\end{enumerate}
\end{corollary}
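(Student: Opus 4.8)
The plan is to reduce all three parts to a single equivalence: for a connected gain graph, $2$ is an eigenvalue of $\mathcal{L}(\Phi)$ if and only if $-\Phi$ is balanced. First I would record the identity $\mathcal{L}(\Phi) + \mathcal{L}(-\Phi) = 2\mathbf{I}$ already used in the proof of Theorem \ref{observation:eigenvalue of -Phi}(i). It shows that $2$ is an eigenvalue of $\mathcal{L}(\Phi)$ exactly when $0$ is an eigenvalue of $\mathcal{L}(-\Phi)$, that is, when $\mathcal{L}(-\Phi)$ is singular. Applying Theorem \ref{theorem: nonsingular and balance} to the connected gain graph $-\Phi$, singularity of $\mathcal{L}(-\Phi)$ is equivalent to $-\Phi$ being balanced. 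This converts the spectral hypothesis ``$2 \in \spec(\mathcal{L}(\Phi))$'' into the purely combinatorial statement ``$-\Phi$ is balanced,'' after which Theorem \ref{theorem:bipa-balan} does all the work.

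With this equivalence in hand, each part becomes a short deduction. For (i), bipartiteness of $G$ together with balancedness of $\Phi$ gives balancedness of $-\Phi$ by Theorem \ref{theorem:bipa-balan}(i), hence $2 \in \spec(\mathcal{L}(\Phi))$. For (ii), the hypotheses say $\Phi$ is balanced and, via the equivalence, $-\Phi$ is balanced; thus for this particular gain the implication ``$\Phi$ balanced $\Rightarrow -\Phi$ balanced'' holds, so Theorem \ref{theorem:bipa-balan}(ii) forces $G$ to be bipartite. For (iii), bipartiteness of $G$ and $2 \in \spec(\mathcal{L}(\Phi))$ give that $-\Phi$ is balanced; applying Theorem \ref{theorem:bipa-balan}(i) to the gain $-\Phi$ (again using that $G$ is bipartite) yields that $-(-\Phi) = \Phi$ is balanced.

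The main thing to get right is the equivalence in the first paragraph; once it is established the rest is bookkeeping. I expect the only real care to be in tracking the direction of the two implications in Theorem \ref{theorem:bipa-balan}: part (i) uses bipartiteness as a hypothesis to pass between a gain and its negation, while part (ii) uses the symmetry of balancedness under negation to \emph{conclude} bipartiteness. I would also note that connectedness, needed both for Theorem \ref{theorem: nonsingular and balance} and for Theorem \ref{theorem:bipa-balan}, is part of the standing hypothesis, and that $-\Phi$ is connected whenever $\Phi$ is, so Theorem \ref{theorem: nonsingular and balance} indeed applies to $-\Phi$.
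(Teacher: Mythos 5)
Your proposal is correct and follows essentially the same route as the paper: the paper's (very terse) proof likewise combines Theorem \ref{observation:eigenvalue of -Phi} (i.e., the identity $\mathcal{L}(\Phi)+\mathcal{L}(-\Phi)=2\mathbf{I}$, turning ``$2\in\spec(\mathcal{L}(\Phi))$'' into singularity of $\mathcal{L}(-\Phi)$, hence balancedness of $-\Phi$ via Theorem \ref{theorem: nonsingular and balance}) with Theorem \ref{theorem:bipa-balan}. Your write-up just makes explicit the intermediate appeal to the singularity--balancedness equivalence and the connectedness of $-\Phi$, which the paper leaves implicit.
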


\begin{proof}
	Proof follows from Theorem \ref{theorem:bipa-balan} and Theorem \ref{observation:eigenvalue of -Phi}.
\end{proof}

\begin{remark}{\rm
		The assumption of balancedness in second part of Corollary \ref{theorem: bipartite and balance} cannot be dropped, as shown in the following example.
		Consider the following Laplacian of a complete gain graph $\Phi$ on $3$ vertices: $$\mathcal{L}(\Phi) = \begin{bmatrix} 1 & 1/2 & 1/2 \\ 1/2 & 1 & 1/2 \\ 1/2& 1/2 & 1 \end{bmatrix}.$$Then, it is easy to check that, $\spec(\mathcal{L}(\Phi)) = \{0.5,0.5,2\}$. Thus,  even though $2$ is an eigenvalue, $\Phi$ is neither bipartite nor balanced.}
\end{remark}

\begin{corollary}
	Let $\Phi = (G, \varphi)$ be a connected $\mathbb{T}$-gain graph. Then the following statemnts hold:
	
	\begin{enumerate}
		\item[(i)] If $\spec(\mathcal{L}(G)) = \spec(\mathcal{L}(\Phi))$ implies $\rho(\mathcal{L}(-G)) = \rho(\mathcal{L}(\Phi))$ for some gain $\varphi$, then $G$ is bipartite.
		\item[(ii)] If $G$ is bipartite, then $\spec(\mathcal{L}(G)) = \spec(\mathcal{L}(\Phi))$ if and only if $\rho(\mathcal{L}(-G))= \rho(\mathcal{L}(\Phi))$ for every gain $\varphi$.
	\end{enumerate}
\end{corollary}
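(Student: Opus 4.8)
The plan is to translate each spectral hypothesis into a statement about balancedness, after which both parts collapse onto the bipartiteness/balancedness dictionary of Theorem~\ref{theorem:bipa-balan}. Two translations do all the work. First, by Theorem~\ref{theorem: spec and (G,1)} the condition $\spec(\mathcal{L}(G)) = \spec(\mathcal{L}(\Phi))$ is equivalent to $\Phi \sim (G,1)$, that is, to $\Phi$ being balanced. Second, by Remark~\ref{observation:phi-balanced-2-eigenvalue} the condition $\rho(\mathcal{L}(-G)) = \rho(\mathcal{L}(\Phi))$ is equivalent to $-\Phi$ being balanced. I will also use the ambient facts that $\rho(\mathcal{L}(-G)) = 2$ for every connected $G$ (as recorded in the proof of Lemma~\ref{lemma: rho-adj-lap}) and that $\rho(\mathcal{L}(G)) = 2$ exactly when $G$ is bipartite (the characterization underlying Lemma~\ref{spec-sym-bip-nor-adj}).

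For part (i) I read the hypothesis as asserting the existence of a gain $\varphi$ for which both $\spec(\mathcal{L}(G)) = \spec(\mathcal{L}(\Phi))$ and $\rho(\mathcal{L}(-G)) = \rho(\mathcal{L}(\Phi))$ hold. Applying the two translations above to this single gain, I obtain a $\Phi$ that is balanced and whose negation $-\Phi$ is also balanced. Feeding this into Theorem~\ref{theorem:bipa-balan}(ii) immediately yields that $G$ is bipartite, which is the desired conclusion.

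For part (ii), assume $G$ is bipartite and fix an arbitrary gain $\varphi$. For the forward implication, $\spec(\mathcal{L}(G)) = \spec(\mathcal{L}(\Phi))$ makes $\Phi$ balanced by Theorem~\ref{theorem: spec and (G,1)}; since $G$ is bipartite, Theorem~\ref{theorem:bipa-balan}(i) upgrades this to $-\Phi$ balanced, whence Remark~\ref{observation:phi-balanced-2-eigenvalue} gives $\rho(\mathcal{L}(-G)) = \rho(\mathcal{L}(\Phi))$. For the converse, I use $\rho(\mathcal{L}(-G)) = 2 = \rho(\mathcal{L}(G))$, valid because $G$ is bipartite, to rewrite the hypothesis $\rho(\mathcal{L}(-G)) = \rho(\mathcal{L}(\Phi))$ as $\rho(\mathcal{L}(\Phi)) = \rho(\mathcal{L}(G))$. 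The theorem immediately preceding this corollary then delivers $\spec(\mathcal{L}(G)) = \spec(\mathcal{L}(\Phi))$, closing the equivalence.

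The only genuine subtlety is parsing the quantifiers: part (i) is an existential statement over gains, to be read as \emph{there is a gain making both equalities hold}, whereas part (ii) ranges over all gains; a misreading of either quantifier degenerates the statement into something vacuous or false. Once the hypotheses are correctly converted into balancedness of $\Phi$ and of $-\Phi$, every remaining step is a direct citation of a result already established, so no new computation is needed. The one piece of bookkeeping worth flagging is keeping $\rho(\mathcal{L}(-G)) = \rho(\mathcal{L}(G)) = 2$ available in the bipartite case, since that identity is exactly what links the spectral-radius condition to the hypothesis of the preceding theorem.
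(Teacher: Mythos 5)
Your proof is correct and takes essentially the same approach as the paper: the paper's own proof is a one-line citation of Theorem \ref{theorem: spec and (G,1)}, Theorem \ref{theorem:bipa-balan}, and Remark \ref{observation:phi-balanced-2-eigenvalue}, which are precisely the balancedness translations you spell out, including the correct existential reading of the quantifier in part (i). The only cosmetic difference is that for the converse of (ii) you route through the preceding theorem (the bipartite case of the conjecture) after noting $\rho(\mathcal{L}(-G))=\rho(\mathcal{L}(G))=2$, rather than arguing via the Remark and Theorem \ref{theorem:bipa-balan} applied to $-\Phi$; since that theorem was itself proved from the same ingredients, this is not a genuinely different argument.
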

\begin{proof}
	The proof follows from  Theorem \ref{theorem: spec and (G,1)}, Theorem \ref{theorem:bipa-balan} and Remark \ref{observation:phi-balanced-2-eigenvalue}.
\end{proof}

\begin{corollary}
	Let $\Phi = (G, \varphi)$ be a connected $\mathbb{T}$-gain graph. Then the following statements hold:
	\begin{enumerate}
		\item[(i)] If $\spec(\mathcal{L}(G)) = \spec(\mathcal{L}(\Phi))$ implies $\rho(\mathcal{L}(\Phi)) = \rho(\mathcal{L}(-\Phi))$ for some gain $\varphi$, then $G$ is bipartite.
		\item[(ii)] If $G$ is bipartite, then $\spec(\mathcal{L}(G)) = \spec(\mathcal{L}(\Phi))$ implies $\rho(\mathcal{L}(\Phi)) = \rho(\mathcal{L}(-\Phi))$ for every gain $\varphi$.
	\end{enumerate}
\end{corollary}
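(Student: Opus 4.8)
The plan is to translate both the spectral condition $\spec(\mathcal{L}(G)) = \spec(\mathcal{L}(\Phi))$ and the spectral-radius condition $\rho(\mathcal{L}(\Phi)) = \rho(\mathcal{L}(-\Phi))$ into statements about balancedness, and then feed these into the bipartite characterization of Theorem \ref{theorem:bipa-balan}. Two dictionaries drive the argument. First, Theorem \ref{theorem: spec and (G,1)} tells us that $\spec(\mathcal{L}(G)) = \spec(\mathcal{L}(\Phi))$ holds exactly when $\Phi \sim (G,1)$, i.e. when $\Phi$ is balanced. Second, Remark \ref{observation:phi-balanced-2-eigenvalue} (together with $\rho(\mathcal{L}(-G)) = 2$ from Lemma \ref{lemma: rho-adj-lap}) gives $\rho(\mathcal{L}(-\Phi)) = 2 \iff \Phi$ balanced, and $\rho(\mathcal{L}(\Phi)) = 2 \iff -\Phi$ balanced.

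The central reduction I would establish is this: whenever $\Phi$ is balanced, the equality $\rho(\mathcal{L}(\Phi)) = \rho(\mathcal{L}(-\Phi))$ is equivalent to $-\Phi$ being balanced. Indeed, if $\Phi$ is balanced then $\rho(\mathcal{L}(-\Phi)) = 2$ by the Remark, so $\rho(\mathcal{L}(\Phi)) = \rho(\mathcal{L}(-\Phi))$ forces $\rho(\mathcal{L}(\Phi)) = 2$, which by the second half of the Remark is equivalent to $-\Phi$ being balanced. With this equivalence in hand, the conjunction ``$\spec(\mathcal{L}(G)) = \spec(\mathcal{L}(\Phi))$ and $\rho(\mathcal{L}(\Phi)) = \rho(\mathcal{L}(-\Phi))$'' is exactly the conjunction ``$\Phi$ balanced and $-\Phi$ balanced.''

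For part (i), the hypothesis supplies a gain $\varphi$ witnessing the implication ``$\spec(\mathcal{L}(G)) = \spec(\mathcal{L}(\Phi)) \Rightarrow \rho(\mathcal{L}(\Phi)) = \rho(\mathcal{L}(-\Phi))$''; by the dictionaries above this reads ``$\Phi$ balanced $\Rightarrow$ $-\Phi$ balanced for some gain,'' which is precisely the hypothesis of Theorem \ref{theorem:bipa-balan}(ii), so $G$ is bipartite. For part (ii), assume $G$ is bipartite and $\spec(\mathcal{L}(G)) = \spec(\mathcal{L}(\Phi))$. Then $\Phi$ is balanced by Theorem \ref{theorem: spec and (G,1)}, and Theorem \ref{theorem:bipa-balan}(i) yields that $-\Phi$ is balanced as well. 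Applying the Remark to both $\Phi$ and $-\Phi$ gives $\rho(\mathcal{L}(-\Phi)) = 2$ and $\rho(\mathcal{L}(\Phi)) = 2$, whence $\rho(\mathcal{L}(\Phi)) = \rho(\mathcal{L}(-\Phi))$.

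The routine computations are negligible here; the main obstacle is conceptual rather than technical. It lies in correctly unwinding the quantifier ``for some gain $\varphi$'' in part (i) so that the chain of equivalences matches the exact logical shape of Theorem \ref{theorem:bipa-balan}(ii), and in verifying that the equivalence ``$\rho(\mathcal{L}(\Phi)) = \rho(\mathcal{L}(-\Phi)) \iff -\Phi$ balanced'' is valid only under the standing assumption that $\Phi$ is itself balanced (so that $\rho(\mathcal{L}(-\Phi))$ is pinned to $2$). Keeping track of which of $\Phi$, $-\Phi$ the balancedness conclusions attach to, and noting that $\rho(\mathcal{L}(-G)) = 2$ is exactly what makes the two halves of the Remark line up, is where the care is needed.
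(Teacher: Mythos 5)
Your proof is correct and takes essentially the same approach as the paper: both translate the spectral condition into balancedness via Theorem \ref{theorem: spec and (G,1)} and Remark \ref{observation:phi-balanced-2-eigenvalue}, and then invoke Theorem \ref{theorem:bipa-balan}. The only cosmetic difference is that the paper deduces that $-\Phi$ is balanced through the eigenvalue symmetry of Theorem \ref{observation:eigenvalue of -Phi} together with Corollary \ref{corollary: singularity and balancedness}, whereas you read it off directly from the second half of the Remark; your explicit unwinding of the ``for some gain'' quantifier is, if anything, slightly more careful than the paper's write-up.
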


\begin{proof}
	\begin{enumerate}
		\item[(i)] Let $\spec(\mathcal{L}(G)) = \spec(\mathcal{L}(\Phi))$ implies $\rho(\mathcal{L}(\Phi)) = \rho(\mathcal{L}(-\Phi))$. Then, by Theorem \ref{theorem: spec and (G,1)} that, $\Phi$ is balanced.  Since $\rho(\mathcal{L}(\Phi)) = \rho(\mathcal{L}(-\Phi))$, then $\lambda_n = \alpha_n =  2-\alpha_{{1}}$. By Remark \ref{observation:phi-balanced-2-eigenvalue}, we have $\alpha_n=2$, which implies $\alpha_1=0$. So, by using Corollary \ref{corollary: singularity and balancedness}, $-\Phi$ is bipartite. Thus, it follows from Theorem \ref{theorem:bipa-balan} that $G$ is bipartite.
		
		\item[(ii)]  Let $G$ be a bipartite graph and $\spec(\mathcal{L}(G)) = \spec(\mathcal{L}(\Phi))$. Then it follows from Theorem \ref{theorem: spec and (G,1)} that $\Phi$ is balanced and therefore, by Theorem \ref{theorem:bipa-balan}, $-\Phi$ is also balanced. Thus, using Remark  \ref{observation:phi-balanced-2-eigenvalue}, $\rho(\mathcal{L}(\Phi)) = \rho(\mathcal{L}(-\Phi))=2$.
	\end{enumerate}
\end{proof}

Here we try to provide a generalization of the classic result regarding bipartite graphs that the spectrum of normalized Laplacian of graph is symmetric about one if and only if the graph is bipartite.

\begin{theorem}\label{theorem:symm about 1}
	Let  $\Phi$ be a connected gain graph. If $\Phi$ is bipartite, then all the eigenvalues of $\mathcal{L}(\Phi)$ are symmetric about $1$ (including multiplicities), i.e., for each eigenvalue $\lambda_i, 2-\lambda_i$ is also an eigenvalue of $\mathcal{L}(\Phi)$.
\end{theorem}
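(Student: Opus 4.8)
The plan is to reduce the claim to the spectral identity $\spec(\mathcal{L}(\Phi)) = \spec(\mathcal{L}(-\Phi))$, which by part (ii) of Theorem \ref{observation:eigenvalue of -Phi} is \emph{equivalent} to the asserted symmetry of the spectrum about $1$ (with multiplicities). So it suffices to exhibit a switching equivalence between $\Phi$ and $-\Phi$ and then invoke Lemma \ref{lemma:spec-equ-adj-lap}(4).

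To build this equivalence, use the bipartiteness of $G$. Since $G$ is connected and bipartite, fix a bipartition $V(G) = V_1 \cup V_2$ and define the switching function $\zeta : V \to \mathbb{T}$ by $\zeta(v) = 1$ for $v \in V_1$ and $\zeta(v) = -1$ for $v \in V_2$; equivalently, let $\mathbf{D}_\zeta = \diag(\zeta(v_1), \ldots, \zeta(v_n))$, a real $\pm 1$ (hence unitary) diagonal matrix. For any edge $e_{ij}$ of $G$ the endpoints $v_i, v_j$ lie in opposite parts, so $\zeta(v_i)\zeta(v_j) = -1$ and therefore $\overline{\zeta(v_i)}\,\varphi(e_{ij})\,\zeta(v_j) = -\varphi(e_{ij})$. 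Reading this off in matrix form gives $\mathbf{D}_\zeta^{-1}\mathbf{A}(\Phi)\mathbf{D}_\zeta = \mathbf{A}(-\Phi)$, i.e., $\Phi \sim -\Phi$. By Lemma \ref{lemma:spec-equ-adj-lap}(4) we then obtain $\spec(\mathcal{L}(\Phi)) = \spec(\mathcal{L}(-\Phi))$, and Theorem \ref{observation:eigenvalue of -Phi}(ii) delivers the symmetry about $1$.

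I do not anticipate a real obstacle here; the only point needing care is the observation that every edge of $G$ crosses the bipartition, so that the $\pm 1$ switching flips the sign of \emph{every} gain simultaneously. That is exactly the defining property of a bipartite graph, and it is the step that fails for non-bipartite $G$. An equally short alternative bypasses Lemma \ref{lemma:spec-equ-adj-lap} and argues directly with the normalized matrices: since $\mathbf{A}(-\Phi) = -\mathbf{A}(\Phi)$ one has $\mathcal{A}(-\Phi) = -\mathcal{A}(\Phi)$ and hence $\mathcal{L}(-\Phi) = \mathbf{I} + \mathcal{A}(\Phi)$, while conjugation by $\mathbf{D}_\zeta$ sends $\mathcal{A}(\Phi)$ to $-\mathcal{A}(\Phi)$ (its only nonzero entries sit on adjacent pairs, each acquiring a factor $\zeta(v_i)\zeta(v_j) = -1$). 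Therefore $\mathbf{D}_\zeta^{-1}\mathcal{L}(\Phi)\mathbf{D}_\zeta = \mathbf{I} + \mathcal{A}(\Phi) = \mathcal{L}(-\Phi)$, giving the same spectral identity. Either route concludes by appealing to Theorem \ref{observation:eigenvalue of -Phi}(ii).
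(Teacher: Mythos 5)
Your proof is correct and is essentially the paper's own argument: the switching matrix $\mathbf{D}_\zeta$ built from the bipartition is exactly the block signature matrix $\begin{bmatrix} -\mathbf{I} & 0 \\ 0 & \mathbf{I} \end{bmatrix}$ the paper conjugates by, and both arguments conclude from $\spec(\mathcal{A}(\Phi)) = \spec(\mathcal{A}(-\Phi))$ via Theorem \ref{observation:eigenvalue of -Phi}(ii). The only difference is notational (entrywise switching-function language versus block-matrix form), so there is nothing further to add.
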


\begin{proof}
	
	If $\Phi$ is bipartite, then $\mathcal{A}(\Phi)$ can be expressed as
	$\begin{bmatrix}
		0 & \mathbf{B} \\ \mathbf{B}^* & 0
	\end{bmatrix}$.
	It is clear that the following holds
	$$ \begin{bmatrix}
		-\mathbf{I} & 0 \\ 0 & \mathbf{I}
	\end{bmatrix}\: \begin{bmatrix}
		0 & -\mathbf{B} \\ -\mathbf{B}^* & 0
	\end{bmatrix}\: \begin{bmatrix}
		-\mathbf{I} & 0 \\ 0 & \mathbf{I}
	\end{bmatrix}=\begin{bmatrix}
		0 & \mathbf{B} \\ \mathbf{B}^* & 0
	\end{bmatrix},$$implying that $\mathcal{A}(\Phi)$ and $\mathcal{A}(-\Phi)$ are unitarily similar. Therefore, $\spec(\mathcal{A}(\Phi))=\spec(\mathcal{A}(-\Phi)),$ and hence the result follows.
\end{proof}
\begin{remark}\label{Remark:purely-img-gain}
	The converse of Theorem \ref{theorem:symm about 1} may not be true, as shown in the following counter example.
\end{remark}
Consider the  normalized Laplacian matrix of a complete gain graph $\Phi$ on $3$ vertices given by: $$\mathcal{L}(\Phi) = \begin{bmatrix} 1 & -\text{i}/2 & -\text{i}/2 \\ \text{i}/2 & 1 & -\text{i}/2 \\ \text{i}/2& \text{i}/2 & 1 \end{bmatrix}.$$Then, it is easy to check that, $\spec(\mathcal{L}(\Phi)) = \{1-\frac{\sqrt{3}}{2},1,1+\frac{\sqrt{3}}{2}\}$. Hence, although the eigenvalues of $\mathcal{L}(\Phi)$ are symmetric about $1$, $\Phi$ is not bipartite.

\section{\textsf{Eigenvalue Interlacing}}\label{sec:eig-inter}

Here, we will prove the edge version of the eigenvalue interlacing result and will be closely following the techniques given in \cite{chen2004interlacing} for our proofs.
\begin{lemma}[\cite{chen2004interlacing}]\label{lemma:a,b}
	Suppose that for real numbers $a , b$ and $\gamma$, $$a^2-2\gamma^2 \geq 0, \quad b^2-\gamma^2 >0, \quad \text{and} \quad \frac{a^2}{b^2} \leq 2.$$ Then $$\frac{a^2-2\gamma^2}{b^2-\gamma^2} \leq \frac{a^2}{b^2}.$$
\end{lemma}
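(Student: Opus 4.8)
The plan is to clear denominators and reduce the claim to an elementary sign condition. Since $b^2 - \gamma^2 > 0$ forces $b^2 > \gamma^2 \geq 0$, in particular $b \neq 0$, both quantities $b^2$ and $b^2 - \gamma^2$ are strictly positive. Hence multiplying the desired inequality $\frac{a^2 - 2\gamma^2}{b^2 - \gamma^2} \leq \frac{a^2}{b^2}$ through by the positive number $b^2(b^2 - \gamma^2)$ preserves the direction of the inequality, and the claim becomes equivalent to the polynomial inequality $(a^2 - 2\gamma^2)b^2 \leq a^2(b^2 - \gamma^2)$.

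Next I would expand both sides. The left-hand side is $a^2 b^2 - 2\gamma^2 b^2$ and the right-hand side is $a^2 b^2 - a^2 \gamma^2$, so after cancelling the common term $a^2 b^2$ the inequality collapses to $-2\gamma^2 b^2 \leq -a^2 \gamma^2$, i.e. to $\gamma^2(a^2 - 2b^2) \leq 0$. This final inequality holds for the simple reason that $\gamma^2 \geq 0$ always, while the hypothesis $\frac{a^2}{b^2} \leq 2$ together with $b^2 > 0$ gives $a^2 - 2b^2 \leq 0$; the product of a nonnegative and a nonpositive number is nonpositive, which completes the argument.

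There is essentially no analytic obstacle here: the only point that requires genuine care is the sign bookkeeping in the cross-multiplication step, where one must use $b^2 - \gamma^2 > 0$ (and not merely $b^2 - \gamma^2 \geq 0$) both to ensure the left-hand ratio is well defined and to justify that multiplying by $b^2(b^2 - \gamma^2)$ leaves the inequality direction unchanged. It is worth remarking that the remaining hypothesis $a^2 - 2\gamma^2 \geq 0$ is not actually needed to establish the stated inequality itself; its role is to guarantee that the numerator on the left is nonnegative, which is presumably what makes the bound meaningful when it is later applied to a Rayleigh-quotient estimate in the interlacing argument.
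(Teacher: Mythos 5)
Your proof is correct. Note that the paper does not actually prove this lemma---it is quoted directly from \cite{chen2004interlacing}---so there is no internal proof to compare against; your cross-multiplication argument (reduce to $(a^2-2\gamma^2)b^2 \leq a^2(b^2-\gamma^2)$, cancel $a^2b^2$, and conclude from $\gamma^2(a^2-2b^2)\leq 0$) is the standard elementary route and is complete, with the sign justifications ($b^2 > \gamma^2 \geq 0$ forcing $b^2 > 0$, and positivity of the multiplier $b^2(b^2-\gamma^2)$) handled correctly. Your closing observation is also accurate and worth keeping: the hypothesis $a^2 - 2\gamma^2 \geq 0$ is not needed for the inequality itself, but only in the application within Theorem \ref{interlacing thm}, where it certifies that the numerator of the modified Rayleigh quotient (the edge sum after removing the contribution of the deleted edge) is nonnegative, so that the resulting bound is meaningful.
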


\begin{theorem}[Eigenvalue Interlacing]\label{interlacing thm}
	Let  $\Phi$ be a gain graph on $n$ vertices without isolated vertices and $G-e$ be the gain graph obtained from $\Phi$ by removing the edge $e$. Assume that $0 \leq \lambda_1 \leq \lambda_2 \leq \ldots \leq \lambda_n$ and $0 \leq \theta_1 \leq \theta_2 \leq \ldots \leq \theta_n$ are the eigenvalues of $\mathcal{L}(\Phi)$ and $\mathcal{L}(\Phi-e)$, respectively. Then  $$\lambda_{i-1} \leq \theta_i \leq \lambda_{i+1}$$ for each $i = 1, 2, \ldots, n,$ with the convention that $\lambda_0 = 0$ and $\lambda_{n +1} = 2$.
\end{theorem}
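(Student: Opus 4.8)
The plan is to work with the generalized Rayleigh quotient rather than with $\mathcal{L}(\Phi)$ directly, because deleting the edge $e = e_{pq}$ changes not only the off-diagonal entries but also the degrees $d_p, d_q$, so the perturbation of $\mathcal{L}$ is \emph{not} of low rank and a naive Weyl/Cauchy interlacing does not apply. Writing $\mathbf{y} = \mathbf{D}^{-1/2}\mathbf{x}$ as in Lemma \ref{lemma:normalized-quadratic-form}, the substitution $\mathbf{x}\mapsto\mathbf{D}^{1/2}\mathbf{y}$ is a dimension-preserving bijection of $\mathbb{C}^n$, so Theorem \ref{theorem:CF} yields the subspace form $\lambda_k = \max_{\dim S = k}\min_{0\neq\mathbf{y}\in S}R_\Phi(\mathbf{y}) = \min_{\dim S = n-k+1}\max_{0\neq\mathbf{y}\in S}R_\Phi(\mathbf{y})$, where $R_\Phi(\mathbf{y}) = \frac{\sum_{v_i\sim v_j}|y_i - a_{ij}y_j|^2}{\sum_i d_i|y_i|^2}$, and the analogous formula holds for $\theta_k$ with $R_{\Phi-e}$. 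Crucially, both families of subspaces range over all of $\mathbb{C}^n$, so I may intersect subspaces built from the $\mathbf{D}$-orthonormal eigenvectors of the pencil $\mathbf{L}(\Phi)\mathbf{u}=\lambda\mathbf{D}\mathbf{u}$ with those of the pencil $\mathbf{L}(\Phi-e)\mathbf{w}=\theta\mathbf{D}'\mathbf{w}$ (here $\mathbf{D}'$ is the degree matrix of $\Phi-e$). Writing $N = \sum_{v_i\sim v_j}|y_i - a_{ij}y_j|^2$, $\Delta = \sum_i d_i|y_i|^2$, $\delta = |y_p - a_{pq}y_q|^2$ and $\gamma^2 = |y_p|^2 + |y_q|^2$, deleting $e$ drops exactly the edge term from the numerator and lowers $d_p,d_q$ by one, so $R_{\Phi-e}(\mathbf{y}) = \frac{N - \delta}{\Delta - \gamma^2}$.

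For the upper bound $\theta_i\le\lambda_{i+1}$ (with $i\le n-1$; the case $i=n$ is the convention $\lambda_{n+1}=2$, valid by Theorem \ref{theorem:0-eig-2}), I would take $P = \mathrm{span}\{\mathbf{u}_1,\dots,\mathbf{u}_{i+1}\}$, on which $R_\Phi\le\lambda_{i+1}$, and $Q = \mathrm{span}\{\mathbf{w}_i,\dots,\mathbf{w}_n\}$, on which $R_{\Phi-e}\ge\theta_i$. Since $\dim P + \dim Q = n+2$, the intersection $P\cap Q$ has dimension at least $2$, leaving room to impose the single linear constraint $y_p + a_{pq}y_q = 0$ and still find a nonzero $\mathbf{y}\in P\cap Q$. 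This constraint is chosen precisely so that $\delta = 2\gamma^2$: then $R_{\Phi-e}(\mathbf{y}) = \frac{N - 2\gamma^2}{\Delta - \gamma^2}$, and since $N\ge\delta = 2\gamma^2$ and $N/\Delta = R_\Phi(\mathbf{y})\le 2$ by Theorem \ref{theorem:0-eig-2}, Lemma \ref{lemma:a,b} gives $R_{\Phi-e}(\mathbf{y})\le R_\Phi(\mathbf{y})$. Hence $\theta_i\le R_{\Phi-e}(\mathbf{y})\le R_\Phi(\mathbf{y})\le\lambda_{i+1}$.

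For the lower bound $\lambda_{i-1}\le\theta_i$ (with $i\ge 2$; for $i=1$ this is $\lambda_0 = 0\le\theta_1$, immediate from positive semidefiniteness), the symmetric argument uses $P' = \mathrm{span}\{\mathbf{u}_{i-1},\dots,\mathbf{u}_n\}$, on which $R_\Phi\ge\lambda_{i-1}$, and $Q' = \mathrm{span}\{\mathbf{w}_1,\dots,\mathbf{w}_i\}$, on which $R_{\Phi-e}\le\theta_i$; again $\dim(P'\cap Q')\ge 2$. This time I impose the opposite constraint $y_p - a_{pq}y_q = 0$, which forces $\delta = 0$, so that $R_{\Phi-e}(\mathbf{y}) = \frac{N}{\Delta - \gamma^2}\ge\frac{N}{\Delta} = R_\Phi(\mathbf{y})$ merely because the denominator shrinks (no appeal to Lemma \ref{lemma:a,b} is needed here). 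Thus $\theta_i\ge R_{\Phi-e}(\mathbf{y})\ge R_\Phi(\mathbf{y})\ge\lambda_{i-1}$.

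The main obstacle, and the reason the interlacing window has width two rather than the width one of ordinary Cauchy interlacing, is exactly the simultaneous change of the numerator (the dropped edge) and of the denominator (the two lowered degrees). The device that resolves it is the single linear constraint steering the chosen vector into the extremal regime $\delta = 2\gamma^2$ (resp. $\delta = 0$), where the coupled change is governed by Lemma \ref{lemma:a,b} (resp. by trivial monotonicity of the quotient in the denominator). A secondary technical point to verify is that $\Delta - \gamma^2 = \sum_i d_i'|y_i|^2 > 0$, i.e. that the selected $\mathbf{y}$ is not supported entirely on vertices rendered isolated by deleting $e$; this can be arranged because $\Phi$ has no isolated vertices and only the endpoints $p,q$ can become isolated in $\Phi - e$, so the few degenerate directions are treated separately or absorbed into the boundary conventions $\lambda_0 = 0$ and $\lambda_{n+1} = 2$.
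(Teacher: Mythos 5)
Your proof is correct, and it rests on exactly the same two devices as the paper's: the special linear relation $y_p = \mp a_{pq}y_q$, which collapses the dropped edge term and the dropped degree terms into one quantity (giving $\delta = 2\gamma^2$ for the upper bound, $\delta = 0$ for the lower one), followed by Lemma \ref{lemma:a,b} (respectively, monotonicity of the quotient in its denominator). Where you genuinely depart from the paper is in how Courant--Fischer is deployed. The paper, following \cite{chen2004interlacing}, never leaves the nested max-min/min-max expressions: it adjoins the constraint as an extra orthogonality vector inside the inner optimization, observes that this can only push the inner min up (resp.\ the inner max down), applies Lemma \ref{lemma:a,b} inside the quotient, and then absorbs the extra vector into the outer optimization to recognize $\lambda_{k+1}$ (resp.\ $\lambda_{k-1}$). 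You instead use the subspace form of Theorem \ref{theorem:CF} for the two pencils $\mathbf{L}(\Phi)\mathbf{u}=\lambda\mathbf{D}\mathbf{u}$ and $\mathbf{L}(\Phi-e)\mathbf{w}=\theta\mathbf{D}'\mathbf{w}$ and produce a single test vector in the intersection of two explicit eigenvector subspaces; the count $\dim(P\cap Q)\ge 2$ leaves exactly enough room for the one extra constraint, which makes transparent why the window has width one. The price is that you must introduce the pencil for $\Phi-e$, which is where the one technical caveat lives: if $e$ is pendant, $\mathbf{D}'$ is singular and $R_{\Phi-e}$ degenerates on vectors supported on the newly isolated vertex. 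To your credit you flag this and sketch the standard fix; the paper's proof has the identical issue (its denominator $\sum_i d_i|y_i|^2-|y_1|^2-|y_2|^2$ can vanish) and passes over it silently. One small inaccuracy in your preamble: $\mathcal{L}(\Phi)-\mathcal{L}(\Phi-e)$ is supported on two rows and two columns, hence has rank at most four, so ``not of low rank'' is not literally true; the substantive point, which your proof makes anyway, is that it is an indefinite perturbation entangled with the change of normalization, so naive Weyl/Cauchy interlacing yields only a window of width four rather than one.
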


\begin{proof}
	Without loss of generality, we suppose that the edge $\overrightarrow{e} = (v_1,v_2)$ is removed, i.e.  $a_{12} = \varphi(e)$ is replaced by $a_{12} = 0$. After deleting the edge $e$, the degrees of $v_1$ and $v_2$ are decreased by one. As shown earlier, we have
	
	\begin{equation*}
		\lambda_k = \max_{\mathbf{y}^{(1)},\mathbf{y}^{(2)}, \ldots,
			\mathbf{y}^{(k-1)} \in \mathbb{C}^n} \min_{\substack{\mathbf{y} \perp \mathbf{y}^{(1)},\mathbf{y}^{(2)}, \ldots, \mathbf{y}^{(k-1)}; \\ \mathbf{y} \neq 0; \: \mathbf{y} \in \mathbb{C}^n}} \frac{\sum_{v_i \sim v_j} |y_i-a_{ij} y_j|^2}{\sum_{i} d_i|y_i|^2}
	\end{equation*}
	
	\begin{equation*}
		= \min_{\mathbf{y}^{(k+1)},\mathbf{y}^{(k+2)}, \ldots,
			\mathbf{y}^{(n)} \in \mathbb{C}^n} \max_{\substack{\mathbf{y} \perp \mathbf{y}^{(k+1)},\mathbf{y}^{(k+2)}, \ldots, \mathbf{y}^{(n)}; \\ \mathbf{y} \neq 0; \: \mathbf{y} \in \mathbb{C}^n}} \frac{\sum_{v_i \sim v_j} |y_i-a_{ij} y_j|^2}{\sum_{i} d_i|y_i|^2}
	\end{equation*}
	
	
	The summmations $\sum_{v_i \sim v_j} |y_i-a_{ij} y_j|^2$ and $\sum_{i} d_i|y_i|^2$ no longer includes the pair $\{1, 2\}$. Thus,
	\begin{equation*}
		\theta_k = \max_{\mathbf{y}^{(1)},\mathbf{y}^{(2)}, \ldots,
			\mathbf{y}^{(k-1)} \in \mathbb{C}^n} \min_{\substack{\mathbf{y} \perp \mathbf{y}^{(1)},\mathbf{y}^{(2)}, \ldots, \mathbf{y}^{(k-1)}; \\ \mathbf{y} \neq 0; \: \mathbf{y} \in \mathbb{C}^n}} \frac{\sum_{v_i \sim v_j} |y_i-a_{ij} y_j|^2-|y_1- \varphi(e)y_2|^2}{\sum_{i} d_i|y_i|^2 -|y_1|^2-|y_2|^2}
	\end{equation*}
	In order to bring the subtracted terms in single variable, so that Lemma \ref{lemma:a,b} can be applied to get the desired result, we relax the minimization constraint by introducing a special relation $y_1 = -\varphi(e) y_2$. So we have,
	\begin{equation*}
		\theta_k \leq \max_{\mathbf{y}^{(1)},\mathbf{y}^{(2)}, \ldots,
			\mathbf{y}^{(k-1)} \in \mathbb{C}^n} \min_{\substack{\mathbf{y} \perp \mathbf{y}^{(1)},\mathbf{y}^{(2)}, \ldots, \mathbf{y}^{(k-1)}; \\ {y_1 = -\varphi(e) y_2}; \\ \mathbf{y} \neq 0; \: \mathbf{y} \in \mathbb{C}^n}} \frac{\sum_{v_i \sim v_j} |y_i-a_{ij} y_j|^2-|y_1- \varphi(e)y_2|^2}{\sum_{i} d_i|y_i|^2 -|y_1|^2-|y_2|^2}
	\end{equation*}
	
	\begin{equation*}
		= \max_{\mathbf{y}^{(1)},\mathbf{y}^{(2)}, \ldots,
			\mathbf{y}^{(k-1)} \in \mathbb{C}^n} \min_{\substack{\mathbf{y} \perp \mathbf{y}^{(1)},\mathbf{y}^{(2)}, \ldots, \mathbf{y}^{(k-1)}, \: {e_1+\varphi(e) e_2}; \\ \mathbf{y} \neq 0; \: \mathbf{y} \in \mathbb{C}^n}} \frac{\sum_{v_i \sim v_j} |y_i-a_{ij} y_j|^2-4|y_1|^2}{\sum_{i} d_i|y_i|^2 -2|y_1|^2}
	\end{equation*}
	
	\begin{equation*}
		\leq \max_{\mathbf{y}^{(1)},\mathbf{y}^{(2)}, \ldots,
			\mathbf{y}^{(k-1)} \in \mathbb{C}^n} \min_{\substack{\mathbf{y} \perp \mathbf{y}^{(1)},\mathbf{y}^{(2)}, \ldots, \mathbf{y}^{(k-1)}, \: {e_1+\varphi(e) e_2}; \\ \mathbf{y} \neq 0; \: \mathbf{y} \in \mathbb{C}^n}} \frac{\sum_{v_i \sim v_j} |y_i-a_{ij} y_j|^2}{\sum_{i} d_i|y_i|^2} \quad \text{(By Lemma \ref{lemma:a,b})}
	\end{equation*}
	
	\begin{equation*}
		\leq \max_{\mathbf{y}^{(1)},\mathbf{y}^{(2)}, \ldots,
			\mathbf{y}^{(k)} \in \mathbb{C}^n} \min_{\substack{\mathbf{y} \perp \mathbf{y}^{(1)},\mathbf{y}^{(2)}, \ldots, \mathbf{y}^{(k)}; \\ \mathbf{y} \neq 0; \: \mathbf{y} \in \mathbb{C}^n}} \frac{\sum_{v_i \sim v_j} |y_i-a_{ij} y_j|^2}{\sum_{i} d_i|y_i|^2} = \lambda_{k+1},
	\end{equation*}where the vectors $e_1, e_2$ are the standard basis vectors. In the second inequality we have used Lemma \ref{lemma:a,b} with $\gamma^2=2|y_1|^2, a^2 = \sum_{v_i \sim v_j} |y_i-a_{ij} y_j|^2 $ and $b^2 = \sum_{i} d_i|y_i|^2$. \\
	
	Similarly, using the min-max version of the Courant–Fischer Theorem \ref{theorem:CF}, we have
	\begin{equation*}
		\theta_k = \min_{\mathbf{y}^{(k+1)},\mathbf{y}^{(k+2)}, \ldots,
			\mathbf{y}^{(n)} \in \mathbb{C}^n} \max_{\substack{\mathbf{y} \perp \mathbf{y}^{(k+1)},\mathbf{y}^{(k+2)}, \ldots, \mathbf{y}^{(n)}; \\ \mathbf{y} \neq 0; \: \mathbf{y} \in \mathbb{C}^n}} \frac{\sum_{v_i \sim v_j} |y_i-a_{ij} y_j|^2-|y_1- \varphi(e)y_2|^2}{\sum_{i} d_i|y_i|^2 -|y_1|^2-|y_2|^2}
	\end{equation*}
	After relaxing the maximization constraint set by introducing a special relation $y_1 = \varphi(e) y_2$, we get
	\begin{equation*}
		\theta_k \geq \min_{\mathbf{y}^{(k+1)},\mathbf{y}^{(k+2)}, \ldots,
			\mathbf{y}^{(n)} \in \mathbb{C}^n} \max_{\substack{\mathbf{y} \perp \mathbf{y}^{(k+1)},\mathbf{y}^{(k+2)}, \ldots, \mathbf{y}^{(n)}; \\ {y_1 = \varphi(e) y_2}; \\ \mathbf{y} \neq 0; \: \mathbf{y} \in \mathbb{C}^n}} \frac{\sum_{v_i \sim v_j} |y_i-a_{ij} y_j|^2-|y_1- \varphi(e)y_2|^2}{\sum_{i} d_i|y_i|^2 -|y_1|^2-|y_2|^2}
	\end{equation*}
	
	
	\begin{equation*}
		\geq \min_{\mathbf{y}^{(k+1)},\mathbf{y}^{(k+2)}, \ldots,
			\mathbf{y}^{(n)} \in \mathbb{C}^n} \max_{\substack{\mathbf{y} \perp \mathbf{y}^{(k+1)},\mathbf{y}^{(k+2)}, \ldots, \mathbf{y}^{(n)}, \: {e_1-\varphi(e) e_2}; \\ \mathbf{y} \neq 0; \: \mathbf{y} \in \mathbb{C}^n}} \frac{\sum_{v_i \sim v_j} |y_i-a_{ij} y_j|^2}{\sum_{i} d_i|y_i|^2}
	\end{equation*}
	
	\begin{equation*}
		\geq \min_{\mathbf{y}^{(k)},\mathbf{y}^{(k+2)}, \ldots,
			\mathbf{y}^{(n)} \in \mathbb{C}^n} \max_{\substack{\mathbf{y} \perp \mathbf{y}^{(k)},\mathbf{y}^{(k+2)}, \ldots, \mathbf{y}^{(n)}; \\ \mathbf{y} \neq 0; \: \mathbf{y} \in \mathbb{C}^n}} \frac{\sum_{v_i \sim v_j} |y_i-a_{ij} y_j|^2}{\sum_{i} d_i|y_i|^2} = \lambda_{k-1},
	\end{equation*}
	Hence $$\lambda_{k-1} \leq \theta_k \leq \lambda_{k+1}$$ with the convention $\lambda_0 = 0$ and $\lambda_{n+1} = 2$. The values of $\lambda_0$ and $\lambda_{n+1}$ have been chosen to make the upper and lower bounds true for $\theta_{0}$ and $\theta_{n}$. The cases when $e = (v_2,v_1)$ and $e$ is a digon can be proved similarly.
\end{proof}

\begin{remark}
	Theorem \ref{interlacing thm} also holds when $\Phi$ has isolated vertices. In this case some additional zero eigenvalues exist. The removal of an edge is only taken on the subgraph without isolated vertices. The eigenvalue interlacing relation can be considered on the corresponding submatrix.
\end{remark}

The following is a direct consequence of Theorem \ref{interlacing thm}.

\begin{corollary}
	Let $\Phi$ be a gain graph and $\mathcal{H}$ be a spanning subgraph of $\Phi$ such that $|E(\Phi-\mathcal{H})| \leq t$ for some integer $t$. Assume that $\lambda_1 \leq \lambda_2 \leq \ldots \leq \lambda_n$ and $\theta_1 \leq \theta_2 \leq \ldots \leq \theta_n$ are eigenvalues of $\mathcal{L}(\Phi)$ and $\mathcal{L}(\mathcal{H})$ , respectively. Then  $$\lambda_{k-t} \leq \theta_k \leq \lambda_{k+t} \quad \text{for each} \quad k = 1, 2, \ldots, n,$$ with the convention\begin{align*}
		\lambda_{1-t} &= \lambda_{2-t} = \ldots = \lambda_0 = 0,\\
		\lambda_{n+1} &= \lambda_{n+2} = \ldots = \lambda_{n+t} = 2.\end{align*}
\end{corollary}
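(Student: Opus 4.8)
The plan is to iterate the single-edge interlacing result from Theorem \ref{interlacing thm} exactly $t$ times. The statement we want bounds the eigenvalues of $\mathcal{L}(\mathcal{H})$ against those of $\mathcal{L}(\Phi)$ where $\mathcal{H}$ is obtained from $\Phi$ by deleting at most $t$ edges. Since edge deletion is a discrete operation, I would write $\Phi - \mathcal{H}$ as a sequence of single-edge removals $\Phi = \Phi_0 \supseteq \Phi_1 \supseteq \cdots \supseteq \Phi_t = \mathcal{H}$, where each $\Phi_{j}$ is obtained from $\Phi_{j-1}$ by deleting one edge (padding with ``no-op'' steps if fewer than $t$ edges are actually removed, which is harmless). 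Let $\lambda_i^{(j)}$ denote the $i$-th smallest eigenvalue of $\mathcal{L}(\Phi_j)$, so that $\lambda_i^{(0)} = \lambda_i$ and $\lambda_i^{(t)} = \theta_i$.

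The key step is to apply Theorem \ref{interlacing thm} at each stage: removing the single edge passing from $\Phi_{j-1}$ to $\Phi_j$ gives
\begin{equation*}
	\lambda_{k-1}^{(j-1)} \leq \lambda_k^{(j)} \leq \lambda_{k+1}^{(j-1)}
\end{equation*}
for every $k$, with the convention $\lambda_0^{(j-1)} = 0$ and $\lambda_{n+1}^{(j-1)} = 2$. Chaining the upper bounds over $j = 1, \ldots, t$ yields $\theta_k = \lambda_k^{(t)} \leq \lambda_{k+1}^{(t-1)} \leq \lambda_{k+2}^{(t-2)} \leq \cdots \leq \lambda_{k+t}^{(0)} = \lambda_{k+t}$, and chaining the lower bounds analogously gives $\theta_k \geq \lambda_{k-t}$. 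This is a straightforward telescoping induction on $t$, and I would present it as such: assume the $(t-1)$-step bound holds, then compose it with one more application of Theorem \ref{interlacing thm}.

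The one place demanding care is bookkeeping of the boundary conventions, which is why I expect that to be the main (though modest) obstacle. When the shifted index $k+t$ exceeds $n$, each intermediate application of Theorem \ref{interlacing thm} caps the relevant eigenvalue at the value $2$; since $\mathcal{L}(\Phi_j) \preceq 2\mathbf{I}$ by Theorem \ref{theorem:0-eig-2} for every $j$, the telescoping upper chain correctly terminates at $\lambda_{n+1} = \cdots = \lambda_{n+t} = 2$, matching the stated convention. Symmetrically, when $k - t$ drops to $0$ or below, the lower bound collapses to $\lambda_0 = \cdots = \lambda_{1-t} = 0$, consistent with nonnegativity of all the eigenvalues. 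I would verify that the conventions propagate correctly through the induction — in particular that applying the one-edge convention repeatedly does not overshoot past $0$ or $2$ — and otherwise the argument is purely a composition of the base case. A subtlety worth a remark is that intermediate graphs $\Phi_j$ may fail to be connected or may acquire isolated vertices even if $\Phi$ does not; this is harmless because Theorem \ref{interlacing thm} (together with the Remark following it) is stated for gain graphs without the connectedness hypothesis, handling isolated vertices via the corresponding submatrix.
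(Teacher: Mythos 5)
Your proposal is correct and is essentially the paper's own argument: the paper states this corollary as a direct consequence of Theorem \ref{interlacing thm}, with the intended proof being exactly the iteration of the single-edge interlacing result over a chain $\Phi = \Phi_0 \supseteq \Phi_1 \supseteq \cdots \supseteq \Phi_t = \mathcal{H}$ that you describe. Your additional care with the boundary conventions (capping at $0$ and $2$) and with possible isolated vertices in intermediate graphs is sound and consistent with the remark following the theorem.
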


\section{\textsf{Characteristic Polynomial of $\mathcal{L}(\Phi)$}}\label{sec:char-poly}
In this section, we will first recall some known definitions. Then, we will compute the coefficients of the characteristic polynomials in terms of the gains of the edges. In \cite[section 4]{hermitian}, the authors considered characteristic polynomials for normalized Hermitian Laplacian matrix, while in \cite{ran-raj-ani-LAMA}, characteristic polynomials for gain adjacency matrix was considered. We will be closely following their proof techniques to prove the results in this section for the normalized gain Laplacian matrix.

Let the characteristic polynomial of $\mathcal{L}(\Phi)$ be $$\Gamma(x, \mathcal{L}(\Phi)) = \det[x \mathbf{I}-\mathcal{L}(\Phi)]= x^n+b_1 x^{n-1}+b_2 x^{n-2}+ \ldots + b_n.$$ The characteristic polynomial of $\mathcal{L}(\Phi)$ can be also expressed as $$\Gamma(x, \mathcal{L}(\Phi)) = \det[(x-1) \mathbf{I}+\mathbf{D}^{-\frac{1}{2}}\: \mathbf{A}(\Phi) \: \mathbf{D}^{-\frac{1}{2}}]= (x-1)^n+c_1 (x-1)^{n-1}+c_2 (x-1)^{n-2}+ \ldots + c_n.$$
Here we will investigate the coefficients of the characteristic polynomial of $\mathcal{L}(\Phi)$. Before moving forward, let us review some of the basic definitions and notations used throughout the section. A \textit{dissection
	graph} is a directed graph such that every component is a vertex or an edge or a cycle. A special dissection graph, known as \textit{elementary graph}, is a directed graph such that every component is an edge or a cycle. A \textit{(real) spanning elementary subgraph} of a gain graph $\Phi$ is an elementary subgraph such that it contains all vertices of $G$ and all its cycles are real. The set of all elementary spanning subgraphs $H$ of $\Phi$ is denoted by $\mathcal{H}(\Phi)$, and $\mathcal{C}(H)$ denotes the collection of all elementary cycles in $H$. The rank $r(\Phi)$ and the corank $s(\Phi)$ of a gain graph $\Phi$ are $r(\Phi) = n - p(\Phi)$, $s(\Phi) = m - n + p(\Phi)$, where $n, m$ and $p(\Phi)$ are the number of vertices, the number of edges and the number of components of $\Phi$, respectively.

Next, we provide an expression of all the coefficients of the characteristic polynomial of $\mathcal{L}(\Phi)$ in terms of elementary subgraphs of $\Phi$. We start by recalling the well known Harary's determinant formula for the adjacency matrices of graphs.
\begin{theorem}\label{harary-form}\cite{harary-det-form}
	Let $\mathbf{A}$ be the adjacency matrix of a graph $G$. Then $\det(\mathbf{A}(G)) = \sum (-1)^{r(H)} 2^{s(H)},$ where the summation is over all elementary spanning subgraphs $H$ of $G$.
\end{theorem}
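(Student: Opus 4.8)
The plan is to prove Harary's determinant formula by expanding the determinant via the Leibniz permutation formula and then grouping permutations according to their cycle structure, matching each contributing term with an elementary spanning subgraph. First I would write
\[
\det(\mathbf{A}(G)) = \sum_{\sigma \in S_n} \sgn(\sigma) \prod_{i=1}^{n} a_{i\,\sigma(i)},
\]
where $a_{ij}$ is the $(0,1)$-entry of $\mathbf{A}(G)$. The key observation is that a term survives (is nonzero) only when $a_{i\,\sigma(i)} = 1$ for every $i$, i.e. when $v_i \sim v_{\sigma(i)}$ for all $i$; in particular $\sigma$ must have no fixed points, since $a_{ii} = 0$. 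Thus I would restrict the sum to fixed-point-free permutations whose every non-trivial cycle $(i_1\,i_2\,\dots\,i_k)$ corresponds to a closed walk $v_{i_1} \sim v_{i_2} \sim \cdots \sim v_{i_k} \sim v_{i_1}$ in $G$.

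Next I would decompose each surviving permutation into disjoint cycles and interpret the cycle structure combinatorially. A $2$-cycle $(i\,j)$ requires $v_i \sim v_j$ and contributes an edge of $G$; a cycle of length $k \geq 3$ requires a genuine cycle $C_k$ in $G$. Hence each surviving $\sigma$ determines an elementary spanning subgraph $H$ (every component an edge or a cycle covering all $n$ vertices). Conversely, each elementary spanning subgraph $H$ arises from its permutations: an edge-component admits exactly one transposition, while a cycle-component of length $\geq 3$ admits exactly two permutations (the two orientations of the cycle). This is where the factor $2^{s(H)}$ enters, since the corank $s(H)$ counts precisely the number of independent cycles in $H$ (here, the number of cycle-components of length $\geq 3$, as $H$ is a disjoint union of edges and cycles spanning $G$).

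The final step is the sign bookkeeping. The sign of a permutation is $\sgn(\sigma) = (-1)^{n - (\text{number of cycles of } \sigma)}$, where the cycle count includes the transpositions and the long cycles. Writing $n - p(H)$ for the rank $r(H)$, where $p(H)$ is the number of components of $H$, one checks that $\sgn(\sigma) = (-1)^{r(H)}$, and that for a long cycle both orientations carry the same sign, so the two permutations of a $k$-cycle combine to give the contribution $2$ without sign cancellation. Summing over all $H \in \mathcal{H}(G)$ then yields
\[
\det(\mathbf{A}(G)) = \sum_{H} (-1)^{r(H)} 2^{s(H)}.
\]
I expect the main obstacle to be the careful verification of the sign and multiplicity accounting for long cycles: one must confirm that the two orientations of each $\geq 3$-cycle yield equal signs (so they reinforce rather than cancel) and that these signs aggregate correctly into the single exponent $r(H)$. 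This is a routine but error-prone parity computation, and it is the crux on which the clean form of the formula depends.
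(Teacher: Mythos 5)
Your proof is correct and is exactly the classical argument: the paper does not reprove this theorem (it cites Harary), but your Leibniz-expansion-plus-cycle-structure bookkeeping is precisely the technique the paper itself redeploys in Section \ref{sec:char-poly} when computing the coefficients of the characteristic polynomial of $\mathcal{L}(\Phi)$. The sign accounting you flag as the crux is in fact immediate: the permutation cycles of a surviving $\sigma$ are in bijection with the components of $H$, so $\sgn(\sigma)=(-1)^{n-p(H)}=(-1)^{r(H)}$, and the two orientations of each long cycle have the same cycle type, hence the same sign, giving the factor $2^{s(H)}$ without cancellation.
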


For $z \in \mathbb{C}$, let $\Re(z)$ denote the real part of $z$.  The proof of the following two results are known for a much more wider class. We need the following particular cases here.
\begin{theorem} \cite[Theorem 2.2]{germina2012balance}
	\label{gain-har-det}
	Let $\Phi$ be a gain graph with the underlying graph $G$. Then
	\begin{equation}
		\det (\mathbf{A}(\Phi))=\sum_{H\in\mathcal{H}(G)}(-1)^{n-p(H)}2^{c(H)}\prod_{C\in \mathcal{C}(H)}\Re(C),
	\end{equation}
	where $p(H)$ is the number of components in $H$ and  $c(H)$ is the number of cycles in $H$.
\end{theorem}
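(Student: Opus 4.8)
The plan is to expand $\det(\mathbf{A}(\Phi))$ by the Leibniz permutation formula and to group the resulting terms according to the cycle structure of the permutations, which organizes them by spanning elementary subgraphs of $G$. Write $\det(\mathbf{A}(\Phi)) = \sum_{\sigma \in S_n} \sgn(\sigma) \prod_{i=1}^n a_{i\sigma(i)}$, where $a_{ij}$ are the entries of $\mathbf{A}(\Phi)$. Since $G$ is simple, the diagonal entries of $\mathbf{A}(\Phi)$ vanish, so any permutation with a fixed point contributes $0$; moreover a term survives only when $v_i \sim v_{\sigma(i)}$ for every $i$. Hence the surviving $\sigma$ are exactly the fixed-point-free permutations all of whose orbits trace adjacent vertices, i.e. unions of oriented cycles of length at least $2$ covering $V(G)$.

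First I would set up the bookkeeping between such permutations and $\mathcal{H}(G)$. Decompose a surviving $\sigma$ into its orbits: an orbit of length $2$, say $(i\,j)$, forces $e_{ij}\in E(G)$ and corresponds to an edge-component, while an orbit of length $k\ge 3$ traces an oriented cycle and corresponds to a cycle-component. Thus each surviving $\sigma$ determines an $H\in\mathcal{H}(G)$, and conversely each $H$ with $c(H)$ cycle-components arises from exactly $2^{c(H)}$ permutations: a transposition is its own inverse, so an edge-component admits a single orbit, whereas each cycle-component of length $\ge 3$ can be oriented in two ways, each giving a distinct orbit.

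Next I would compute the total contribution of all $\sigma$ attached to a fixed $H$. Because each orbit of $\sigma$ is one component of $H$, the number of orbits equals $p(H)$, so $\sgn(\sigma) = (-1)^{n-p(H)}$, independent of the chosen orientations. For the entry product, an edge-component $(i\,j)$ contributes $a_{ij}a_{ji} = \varphi(\overrightarrow{e_{ij}})\varphi(\overrightarrow{e_{ji}}) = |\varphi(\overrightarrow{e_{ij}})|^2 = 1$, using $\varphi(\overrightarrow{e_{ji}}) = \varphi(\overrightarrow{e_{ij}})^{-1}$ and $|\varphi|=1$; a cycle-component $C$ contributes $\varphi(C)$ in one orientation and, since reversing the traversal inverts (hence conjugates, as $|\varphi|=1$) each edge-gain, $\overline{\varphi(C)}$ in the other. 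Summing over the $2^{c(H)}$ orientation choices therefore factors over the cycles:
\[
\sum_{\sigma \leftrightarrow H} \prod_{i=1}^n a_{i\sigma(i)} = \prod_{C \in \mathcal{C}(H)} \bigl(\varphi(C) + \overline{\varphi(C)}\bigr) = \prod_{C \in \mathcal{C}(H)} 2\Re(\varphi(C)) = 2^{c(H)} \prod_{C \in \mathcal{C}(H)} \Re(\varphi(C)).
\]
Multiplying by the common sign $(-1)^{n-p(H)}$ and summing over all $H \in \mathcal{H}(G)$ reproduces the claimed identity.

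The main obstacle, and the only genuinely delicate point, is the sign and orientation bookkeeping in the last two steps: verifying that $\sgn(\sigma) = (-1)^{n-p(H)}$ is constant across the $2^{c(H)}$ permutations attached to $H$ (reversing a cycle preserves both its length and the orbit count), and that the two traversals of a cycle-component yield conjugate gains whose sum is $2\Re(\varphi(C))$, a quantity that is orientation-independent as already recorded in the preliminaries. Everything else is a direct rearrangement of the Leibniz sum, and setting all gains to $1$ recovers Harary's formula (Theorem \ref{harary-form}) as a consistency check.
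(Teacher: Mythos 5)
Your proof is correct: the restriction to fixed-point-free permutations whose orbits trace edges and cycles of $G$, the count of exactly $2^{c(H)}$ permutations per spanning elementary subgraph $H$ (transpositions being their own reversal), the orientation-independent sign $\sgn(\sigma)=(-1)^{n-p(H)}$, and the pairing $\varphi(C)+\overline{\varphi(C)}=2\Re(\varphi(C))$ for the two traversals of each cycle are all handled properly. The paper gives no proof of this statement---it is imported from \cite{germina2012balance}---and your argument is the standard one, matching the permutation-expansion technique the paper itself deploys in Section~\ref{sec:char-poly} when proving the Sachs-type coefficient theorem for $\mathcal{L}(\Phi)$.
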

\begin{theorem}\cite[Corollary 2.3]{germina2012balance}
	\label{gain-sachs-det}Let $\Phi$ be any gain graph with the underlying graph $G$. Let $P_\Phi(x)=x^n+a_1x^{n-1}+\cdots+a_n$ be the characteristics polynomial of $\Phi$. Then
	$$a_i=\sum_{H\in\mathcal{H}_i(G)}(-1)^{p(H)}2^{c(H)}\prod_{C\in \mathcal{C}(H)}\Re(C),$$
	where $\mathcal{H}_i(G)$ is the set of elementary subgraphs of $G$ with $i$ vertices.
\end{theorem}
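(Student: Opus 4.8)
The plan is to extract $a_i$ from the principal-minor expansion of the characteristic polynomial and then feed each minor into the gain version of Harary's formula (Theorem \ref{gain-har-det}). Recall the standard symmetric-function identity: if $P_\Phi(x) = \det(x\mathbf{I} - \mathbf{A}(\Phi)) = x^n + a_1 x^{n-1} + \cdots + a_n$, then $a_i = (-1)^i \sum_{|S|=i} \det(\mathbf{A}(\Phi)[S])$, where $S$ ranges over all $i$-element vertex subsets and $\mathbf{A}(\Phi)[S]$ denotes the corresponding principal submatrix. This is just the expression of $a_i$ as $(-1)^i$ times the $i$-th elementary symmetric function of the eigenvalues, equivalently the sum of all $i\times i$ principal minors.

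The key structural observation is that $\mathbf{A}(\Phi)[S]$ is exactly the gain adjacency matrix of the induced gain subgraph $\Phi[S]$: deleting the rows and columns indexed outside $S$ retains precisely the edges lying inside $S$ together with their gains. I would therefore apply Theorem \ref{gain-har-det} to each $\Phi[S]$, which has $|S| = i$ vertices, obtaining
$$\det(\mathbf{A}(\Phi)[S]) = \sum_{H \in \mathcal{H}(\Phi[S])} (-1)^{i - p(H)} 2^{c(H)} \prod_{C \in \mathcal{C}(H)} \Re(C),$$
where the sum runs over the spanning elementary subgraphs of the $i$-vertex graph $\Phi[S]$.

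Next I would collapse the two layers of summation. A spanning elementary subgraph $H$ of some $\Phi[S]$ with $|S| = i$ is the same object as an elementary subgraph of $\Phi$ on exactly $i$ vertices, and its vertex set $V(H) = S$ is recovered uniquely from $H$. Hence the pairs $(S, H)$ range bijectively over $\mathcal{H}_i(G)$, and substituting gives
$$a_i = (-1)^i \sum_{H \in \mathcal{H}_i(G)} (-1)^{i - p(H)} 2^{c(H)} \prod_{C \in \mathcal{C}(H)} \Re(C).$$
The sign then simplifies, since $(-1)^i (-1)^{i-p(H)} = (-1)^{2i - p(H)} = (-1)^{p(H)}$, yielding the claimed identity.

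The computation is essentially routine once the minor expansion and Theorem \ref{gain-har-det} are available; the only step requiring genuine care is the bijection in the third paragraph. One must verify that summing $\det(\mathbf{A}(\Phi)[S])$ over all $i$-subsets $S$ and then over the spanning elementary subgraphs of each $\Phi[S]$ reproduces every $i$-vertex elementary subgraph of $G$ exactly once, with no omission or double counting. Everything else reduces to sign bookkeeping.
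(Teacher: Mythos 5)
Your proof is correct, and it is sound at every step: the expansion $a_i = (-1)^i \sum_{|S|=i}\det(\mathbf{A}(\Phi)[S])$, the identification of each principal submatrix $\mathbf{A}(\Phi)[S]$ with the adjacency matrix of the induced gain subgraph $\Phi[S]$, the application of Theorem \ref{gain-har-det} with $n$ replaced by $i$, the bijection $(S,H)\leftrightarrow H$ (valid because a spanning elementary subgraph of $\Phi[S]$ has no isolated vertices, so $S=V(H)$ is recovered from $H$), and the sign simplification $(-1)^i(-1)^{i-p(H)}=(-1)^{p(H)}$. Note, however, that the paper itself gives no proof of this statement: it imports it as \cite[Corollary 2.3]{germina2012balance}, remarking only that the proof is known for a wider class, so your argument fills a gap the paper deliberately leaves open. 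Your route — deriving the Sachs-type coefficient formula from the Harary-type determinant formula via principal minors — is the standard one (and presumably how the cited corollary follows from the cited theorem in the original reference); it is also the same device the paper itself deploys later for the normalized Laplacian, where Lemma \ref{lemma: CharPoly phi} reduces the coefficients $c_k$ to sums of $k\times k$ minors before invoking Theorem \ref{cor: CharPoly adjacency}.
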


\begin{theorem}\label{theorem:charpoly-det-adj}
	Let $\mathbf{A}(\Phi)$ be the    adjacency matrix of a gain graph $\Phi = (G, \varphi)$. Then $$\det(\mathbf{A}(\Phi)) = \sum_{H} (-1)^{r( H )}2^{s( H )}\: \mathop{\Pi}_{C \in \mathcal{C}(H)} \Re(\varphi(C )),$$where the sum is over all spanning  elementary subgraphs $ H $ of $\Phi$.
\end{theorem}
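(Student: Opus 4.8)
The plan is to recognize that Theorem \ref{theorem:charpoly-det-adj} is essentially a restatement of Theorem \ref{gain-har-det}, so the main task is to reconcile the two sets of notation rather than to prove anything genuinely new. First I would observe that the summation in \ref{gain-har-det} runs over $H \in \mathcal{H}(G)$, the spanning elementary subgraphs of the underlying graph $G$, which is exactly the index set ``over all spanning elementary subgraphs $H$ of $\Phi$'' appearing in the statement to be proved. The product factors $\prod_{C \in \mathcal{C}(H)} \Re(C)$ and $\prod_{C \in \mathcal{C}(H)} \Re(\varphi(C))$ are the same object, since $\Re(C)$ is shorthand for the real part of the gain $\varphi(C)$ of the cycle $C$. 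So the only real content is to verify that the sign and power-of-two prefactors agree under the translation $r(H) = n - p(H)$ and $s(H) = c(H)$ for a spanning elementary subgraph.

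The key step is therefore the combinatorial identity between the two exponents. For a spanning elementary subgraph $H$ on all $n$ vertices of $G$, every component is either an edge or a cycle, so the rank satisfies $r(H) = n - p(H)$ by the very definition given in the preliminaries of this section, matching $(-1)^{n-p(H)}$ with $(-1)^{r(H)}$. For the power of two, I would argue that each component of $H$ that is a cycle contributes exactly $1$ to the corank: an edge-component has equal vertex and edge counts and contributes nothing, whereas a cycle on $\ell$ vertices has $\ell$ edges and one independent cycle. Summing over components gives $s(H) = m(H) - n + p(H) = c(H)$, where $m(H)$ is the number of edges of $H$ and $c(H)$ the number of cycle-components. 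This identifies $2^{s(H)}$ with $2^{c(H)}$ and completes the dictionary between the two theorems.

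With these two exponent identities in hand, the proof reduces to citing Theorem \ref{gain-har-det} and substituting $r(H)$ for $n - p(H)$, $s(H)$ for $c(H)$, and $\Re(\varphi(C))$ for $\Re(C)$ in its conclusion. The main (and only) obstacle is bookkeeping: one must be careful that ``elementary subgraph'' here always means a \emph{spanning} elementary subgraph (every vertex covered), since otherwise the rank–corank formulas do not simplify so cleanly, and that the real-part factors are genuinely independent of cycle orientation, which is guaranteed because $\Re(\varphi(C))$ was noted earlier to be orientation-independent. I would close by remarking that this result is the determinant specialization ($i = n$) of the Sachs-type coefficient formula in Theorem \ref{gain-sachs-det}, which serves as a consistency check and foreshadows the general coefficient computation pursued in the remainder of the section.
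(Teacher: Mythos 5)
Your proposal is correct and takes essentially the same route as the paper: the paper's own proof is a one-line reduction, ``Proof follows from the proofs of Theorem \ref{harary-form} and Theorem \ref{gain-har-det},'' i.e.\ exactly the citation you make, with the notational dictionary left implicit. Your explicit verification that $r(H)=n-p(H)$ and $s(H)=m(H)-n+p(H)=c(H)$ for spanning elementary subgraphs, and that $\Re(C)$ means $\Re(\varphi(C))$ independently of orientation, is precisely the bookkeeping the paper's terse proof takes for granted.
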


\begin{proof}
	Proof follows from the proofs of Theorem \ref{harary-form} and Theorem \ref{gain-har-det}.
\end{proof}
It is easy to see that, the above theorem extends  \cite[Theorem 2.7]{liu2015hermitian} for gain graphs.
\begin{theorem}\label{cor: CharPoly adjacency}
	Let $\Phi$ be a gain graph and $\Gamma(x, \mathbf{A}(\Phi)) = x^n+a_1 x^{n-1}+a_2 x^{n-2}+ \ldots + a_n$ be be its characteristic polynomial of $\mathbf{A}(\Phi)$. Then $$(-1)^{k}a_k = \sum_{\mathcal{H}} (-1)^{r(\mathcal{H})}2^{s(\mathcal{H})}\:  \mathop{\Pi}_{C \in \mathcal{C}(H)} \Re(\varphi(C )),$$ where the sum is over all the elementary subgraphs $\mathcal{H}$ of $\Phi$ with $k$ vertices.
\end{theorem}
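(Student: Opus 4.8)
The plan is to reduce the computation of each coefficient $a_k$ to a sum of principal minors of $\mathbf{A}(\Phi)$, and then to evaluate each such minor using Theorem \ref{theorem:charpoly-det-adj}. Recall the standard identity relating the coefficients of a characteristic polynomial to principal minors: if $\Gamma(x,\mathbf{M}) = \det[x\mathbf{I}-\mathbf{M}] = x^n + a_1 x^{n-1} + \cdots + a_n$, then
$$
a_k = (-1)^k \sum_{\substack{S \subseteq V(G) \\ |S|=k}} \det(\mathbf{M}_S),
$$
where $\mathbf{M}_S$ denotes the principal submatrix of $\mathbf{M}$ indexed by the subset $S$. Equivalently, $(-1)^k a_k$ is the $k$-th elementary symmetric function of the eigenvalues, which equals the sum of all $k\times k$ principal minors. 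I would state this as the opening step, citing it as a standard fact about characteristic polynomials of square matrices.

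Next I would make the key structural observation that a principal submatrix of the gain adjacency matrix is itself a gain adjacency matrix. Concretely, for $S \subseteq V(G)$ with $|S|=k$, the submatrix $\mathbf{A}(\Phi)_S$ is exactly $\mathbf{A}(\Phi[S])$, the adjacency matrix of the gain graph induced on the vertex set $S$ (the gains of the retained edges are unchanged, and entries corresponding to pairs with at least one endpoint outside $S$ are simply dropped). This is immediate from the entrywise definition of $\mathbf{A}(\Phi)$, since deleting rows and columns indexed by $V(G)\setminus S$ leaves precisely the adjacencies and gains among vertices of $S$.

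With these two facts in place, I would apply Theorem \ref{theorem:charpoly-det-adj} to each induced subgraph $\Phi[S]$, obtaining
$$
\det(\mathbf{A}(\Phi)_S) = \sum_{H} (-1)^{r(H)} 2^{s(H)} \mathop{\Pi}_{C \in \mathcal{C}(H)} \Re(\varphi(C)),
$$
where the inner sum ranges over all spanning elementary subgraphs $H$ of $\Phi[S]$. Substituting into the principal-minor expression for $a_k$ gives a double sum over $k$-subsets $S$ and over spanning elementary subgraphs of $\Phi[S]$.

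The final step is a repackaging of this double sum via the bijection between its index set and the set of all elementary subgraphs of $\Phi$ on exactly $k$ vertices. Indeed, a spanning elementary subgraph $H$ of $\Phi[S]$ is precisely an elementary subgraph $\mathcal{H}$ of $\Phi$ whose vertex set is $S$; conversely, every elementary subgraph $\mathcal{H}$ with $|V(\mathcal{H})|=k$ arises this way for the unique choice $S = V(\mathcal{H})$. Hence the double sum collapses to a single sum over elementary subgraphs $\mathcal{H}$ with $k$ vertices, yielding the claimed formula. I do not anticipate a serious obstacle here; the only point requiring care is the sign and rank bookkeeping, namely confirming that the $(-1)^k$ from the principal-minor identity combines consistently with the $(-1)^{r(H)}$ from Theorem \ref{theorem:charpoly-det-adj} and that $r(\mathcal{H})$, $s(\mathcal{H})$ are computed with respect to the $k$-vertex subgraph $\mathcal{H}$ rather than all of $\Phi$ — which is exactly what the formula intends, since $\mathcal{H}$ spans only $S$.
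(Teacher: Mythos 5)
Your proof is correct and follows essentially the same route as the paper: the paper's proof is a one-line deferral to ``the proofs of the Sachs coefficient theorem and Theorem \ref{gain-sachs-det},'' and those proofs are exactly the argument you spell out --- expand $(-1)^k a_k$ as the sum of $k\times k$ principal minors, identify each principal submatrix with $\mathbf{A}(\Phi[S])$, evaluate it by the determinant formula of Theorem \ref{theorem:charpoly-det-adj}, and collapse the double sum via the bijection between spanning elementary subgraphs of the induced subgraphs and $k$-vertex elementary subgraphs of $\Phi$. Your sign and rank/corank bookkeeping (including $s(\mathcal{H})=c(\mathcal{H})$ being intrinsic to $\mathcal{H}$) is consistent, so the proposal is a correct, self-contained version of what the paper cites.
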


\begin{proof}
	The proof follows from the proofs of Sachs coefficient theorem and Theorem \ref{gain-sachs-det}.
\end{proof}

It is easy to see that the above theorem extends  \cite[Theorem 2.8]{liu2015hermitian}   for gain graphs.

Next, we discuss the combinatorial description of coefficients of the characteristic polynomials of gain normalized Laplacian matrices. Similar to \cite{hermitian}, we now introduce a new polynomial in order to attain the required results: $$\det[(x-1) \mathbf{I}-\mathbf{D}^{-\frac{1}{2}}\: \mathbf{A}(\Phi) \: \mathbf{D}^{-\frac{1}{2}}]= (x-1)^n+c^{'}_1 (x-1)^{n-1}+c^{'}_2 (x-1)^{n-2}+ \ldots + c^{'}_n.$$ Then $$\det[(x-1) \mathbf{I}+\mathbf{D}^{-\frac{1}{2}}\: \mathbf{A}(\Phi) \: \mathbf{D}^{-\frac{1}{2}}]= (-1)^n \det[(1-x) \mathbf{I}-\mathbf{D}^{-\frac{1}{2}}\: \mathbf{A}(\Phi) \: \mathbf{D}^{-\frac{1}{2}}]$$ and $(-1)^{k}c^{'}_k$ equals to the sum of all $k \times k$ minors of $\mathbf{D}^{-\frac{1}{2}}\: \mathbf{A}(\Phi) \: \mathbf{D}^{-\frac{1}{2}}$. It follows that

\begin{lemma}\label{lemma: CharPoly phi}
	Let $\Phi$ be a gain graph on $n$ vertices and $\Gamma(x, \mathcal{L}(\Phi))= (x-1)^n+c_1 (x-1)^{n-1}+c_2 (x-1)^{n-2}+ \ldots + c_n$. Then $c_k$,  $k=1,2,\ldots, n$, is the sum of all $k \times k$ minors of $\mathbf{D}^{-\frac{1}{2}}\: \mathbf{A}(\Phi) \: \mathbf{D}^{-\frac{1}{2}}$.
\end{lemma}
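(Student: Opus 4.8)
The plan is to relate the coefficients $c_k$ in the expansion about $x-1$ directly to the principal minors of the shifted matrix. Observe first that the substitution $y = x-1$ turns $\Gamma(x,\mathcal L(\Phi))$ into $\det[y\mathbf I - \mathcal L(\Phi) + \mathbf I]$. Since $\mathcal L(\Phi) = \mathbf I - \mathbf D^{-1/2}\mathbf A(\Phi)\mathbf D^{-1/2}$, this equals $\det[y\mathbf I + \mathbf D^{-1/2}\mathbf A(\Phi)\mathbf D^{-1/2}]$, which is precisely the polynomial $(y)^n + c_1 y^{n-1} + \cdots + c_n$ defining the $c_k$. So the whole problem reduces to reading off the coefficients of the characteristic-type polynomial of the matrix $\mathbf M := \mathbf D^{-1/2}\mathbf A(\Phi)\mathbf D^{-1/2}$ when expanded in powers of $y$ with a \emph{plus} sign.

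The main tool is the standard identity expressing the coefficients of $\det[y\mathbf I - \mathbf M]$ as signed sums of principal minors: if $\det[y\mathbf I - \mathbf M] = y^n + \sum_{k=1}^n p_k\, y^{n-k}$, then $p_k = (-1)^k E_k(\mathbf M)$, where $E_k(\mathbf M)$ is the sum of all $k\times k$ principal minors of $\mathbf M$. I would apply this to $-\mathbf M$ to account for the plus sign: since $\det[y\mathbf I + \mathbf M] = \det[y\mathbf I - (-\mathbf M)]$, the coefficient of $y^{n-k}$ equals $(-1)^k E_k(-\mathbf M) = (-1)^k(-1)^k E_k(\mathbf M) = E_k(\mathbf M)$. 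Comparing with the definition $c_k = $ coefficient of $(x-1)^{n-k} = y^{n-k}$, this gives $c_k = E_k(\mathbf M)$, the sum of all $k\times k$ principal minors of $\mathbf D^{-1/2}\mathbf A(\Phi)\mathbf D^{-1/2}$.

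There is a subtlety I want to flag, since the excerpt's preceding paragraph phrases the auxiliary computation in terms of $(-1)^k c'_k$ being the sum of $k\times k$ minors of $\mathbf M$ for the \emph{minus}-sign polynomial $\det[(x-1)\mathbf I - \mathbf M]$, and then relates the two polynomials by the factor $(-1)^n\det[(1-x)\mathbf I - \mathbf M]$. The cleanest route is therefore to first establish the $c'_k$ statement by the principal-minor identity above, then track the two sign changes coming from (i) replacing $\mathbf M$ by $-\mathbf M$ and (ii) the relation between the $c'$- and $c$-expansions. The signs should telescope so that the net effect is $c_k$ equals the unsigned sum of principal minors, but I would verify the parity bookkeeping carefully — this is where an off-by-a-sign error is most likely to creep in, and it is genuinely the only nontrivial step since everything else is the textbook expansion of a characteristic polynomial into elementary symmetric functions of the eigenvalues, equivalently into principal minors.

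I expect the bookkeeping of the two sign reversals to be the main (and essentially the only) obstacle; the algebraic content is entirely the principal-minor expansion of a determinant, which may be cited as standard. Once the signs are pinned down, the statement follows immediately, and the result then feeds naturally into the subsequent combinatorial description where each principal minor of $\mathbf M$ is interpreted, via Theorem \ref{theorem:charpoly-det-adj} applied to the corresponding principal submatrix (a gain subgraph), as a sum over elementary subgraphs weighted by $(-1)^{r(H)}2^{s(H)}\prod_{C}\Re(\varphi(C))$ and divided by the appropriate degree product $\prod_i d_i$ arising from the $\mathbf D^{-1/2}$ factors.
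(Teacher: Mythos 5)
Your proof is correct and follows essentially the same route as the paper: the paper's argument is exactly the standard principal-minor expansion of $\det[y\mathbf{I}-\mathbf{M}]$ applied to $\mathbf{M}=\mathbf{D}^{-\frac{1}{2}}\mathbf{A}(\Phi)\mathbf{D}^{-\frac{1}{2}}$, with the plus sign absorbed via the relation $\det[(x-1)\mathbf{I}+\mathbf{M}]=(-1)^n\det[(1-x)\mathbf{I}-\mathbf{M}]$, which is the same sign telescoping you carry out by replacing $\mathbf{M}$ with $-\mathbf{M}$. Your parity bookkeeping $(-1)^k E_k(-\mathbf{M})=E_k(\mathbf{M})$ is exactly the cancellation the paper's ``it follows'' relies on, so nothing is missing.
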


\begin{theorem}
	Let $\Phi$ be a gain graph on $n$ vertices and $\Gamma(x, \mathcal{L}(\Phi))= (x-1)^n+c_1 (x-1)^{n-1}+c_2 (x-1)^{n-2}+ \ldots + c_n$. Then $$c_k = \sum_{H} (-1)^{r(H) }2^{s(H)} \frac{\mathop{\Pi}_{C \in \mathcal{C}(H)} \Re(\varphi(C ))}{\Pi_{v \in V(H)}d_{\Phi} (v)}$$ where the summation is over all the elementary subgraphs $H$ of $\Phi$ with $k$ vertices and $d_{\Phi} (v)$ is the degree of the vertex $v$ in $\Phi$.
\end{theorem}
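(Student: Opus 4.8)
The plan is to start from Lemma~\ref{lemma: CharPoly phi}, which already identifies $c_k$ as the sum of all $k \times k$ principal minors of $\mathbf{M} := \mathbf{D}^{-\frac{1}{2}} \mathbf{A}(\Phi) \mathbf{D}^{-\frac{1}{2}}$. Indexing principal minors by the vertex subset $S \subseteq V(G)$ they occupy, and writing $\mathbf{M}[S]$ for the principal submatrix on $S$, this reads $c_k = \sum_{|S| = k} \det(\mathbf{M}[S])$. The remaining task is purely to evaluate each $\det(\mathbf{M}[S])$ combinatorially.

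First I would exploit the diagonal structure of $\mathbf{D}^{-\frac{1}{2}}$. Since $(\mathbf{M})_{ij} = a_{ij}/\sqrt{d_i d_j}$, the principal submatrix factors as $\mathbf{M}[S] = \mathbf{D}_S^{-\frac{1}{2}} \, \mathbf{A}(\Phi)[S] \, \mathbf{D}_S^{-\frac{1}{2}}$, where $\mathbf{A}(\Phi)[S]$ is the adjacency matrix of the induced gain subgraph $\Phi[S]$ and $\mathbf{D}_S = \diag(d_\Phi(v) : v \in S)$ collects the \emph{ambient} degrees (not the degrees inside $\Phi[S]$). Taking determinants and pulling out the diagonal factors gives
\[
\det(\mathbf{M}[S]) = \frac{\det(\mathbf{A}(\Phi)[S])}{\prod_{v \in S} d_\Phi(v)}.
\]

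Next I would apply Theorem~\ref{theorem:charpoly-det-adj} to the gain graph $\Phi[S]$, whose adjacency matrix is exactly $\mathbf{A}(\Phi)[S]$. This expresses $\det(\mathbf{A}(\Phi)[S])$ as $\sum_H (-1)^{r(H)} 2^{s(H)} \prod_{C \in \mathcal{C}(H)} \Re(\varphi(C))$, where $H$ runs over all spanning elementary subgraphs of $\Phi[S]$; note that the cycle gains are intrinsic to the edges of $C$ and so are unaffected by whether $C$ is read inside $\Phi[S]$ or $\Phi$. The key observation is that a spanning elementary subgraph $H$ of $\Phi[S]$ is the same thing as an elementary subgraph of $\Phi$ whose vertex set equals $S$; in particular $V(H) = S$, so $\prod_{v \in S} d_\Phi(v) = \prod_{v \in V(H)} d_\Phi(v)$, and the quantities $r(H) = |V(H)| - p(H)$ and $s(H) = |E(H)| - |V(H)| + p(H)$ depend only on the abstract graph $H$ and hence coincide whether $H$ is regarded as a spanning subgraph of $\Phi[S]$ or as a $k$-vertex subgraph of $\Phi$.

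Substituting the last two displays into $c_k = \sum_{|S|=k} \det(\mathbf{M}[S])$ produces a double sum over pairs $(S, H)$ with $|S| = k$ and $H$ spanning elementary in $\Phi[S]$. Since each $k$-vertex elementary subgraph $H$ of $\Phi$ arises from exactly one subset, namely $S = V(H)$, this double sum collapses to a single sum over all elementary subgraphs $H$ of $\Phi$ with $k$ vertices, giving the claimed expression for $c_k$. I expect the only genuinely delicate point to be this reindexing bookkeeping --- verifying the bijection $S \leftrightarrow \{H : V(H) = S\}$ and the consistency of the $r(H), s(H)$ conventions across the two viewpoints --- together with the easily-overlooked fact that the degree normalization uses ambient degrees $d_\Phi(v)$ while the adjacency block is that of the induced subgraph.
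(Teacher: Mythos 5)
Your proposal is correct and follows essentially the same route as the paper's proof: invoke Lemma~\ref{lemma: CharPoly phi}, factor each principal minor of $\mathbf{D}^{-\frac{1}{2}}\mathbf{A}(\Phi)\mathbf{D}^{-\frac{1}{2}}$ through the diagonal degree matrices to reduce to $\det(\mathbf{A}(\Phi)[S])$ weighted by $\prod_{v\in S} d_\Phi(v)^{-1}$, evaluate that determinant via the elementary-subgraph expansion (Theorem~\ref{theorem:charpoly-det-adj}/Theorem~\ref{cor: CharPoly adjacency}), and regroup the double sum over pairs $(S,H)$ into a single sum over $k$-vertex elementary subgraphs. Your write-up is in fact more careful than the paper's about the ambient-versus-induced degree distinction and the reindexing bijection, but the underlying argument is identical.
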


\begin{proof}
	By Lemma \ref{lemma: CharPoly phi}, $c_k$ is the sum of all $k \times k$ minors of $\mathbf{D}^{-\frac{1}{2}}\: \mathbf{A}(\Phi) \: \mathbf{D}^{-\frac{1}{2}}$. Each such $k \times k$ minors of $\mathbf{D}^{-\frac{1}{2}}\: \mathbf{A}(\Phi) \: \mathbf{D}^{-\frac{1}{2}}$ is the product of the corresponding $k \times k$ minors of $\mathbf{D}^{-\frac{1}{2}}, \mathbf{A}(\Phi), \mathbf{D}^{-\frac{1}{2}}$, respectively. Moreover, any $k \times k$ minor of $\mathbf{A}(\Phi)$ is the determinant of gain adjacency matrix of an induced subgraph of $\Phi$ with $k$ vertices. So the result holds by Theorem \ref{cor: CharPoly adjacency}.
\end{proof}

The following result is a counterpart of the well-known Sachs coefficient theorem for the gain normalized Laplacian. The proof is similar to that of  \cite[Theorem 4.2]{hermitian}. For the sake of completeness, we include a proof here.

\begin{theorem}
	Let $\Phi$ be a gain graph on $n$ vertices and $\Gamma(x, \mathcal{L}(\Phi)) = x^n+b_1 x^{n-1}+b_2 x^{n-2}+ \ldots + b_n$.  Then for $k = 1, 2, \ldots, n$ $$(-1)^{k}b_k = \sum_{H} (-1)^{r(H)+o(H)} \frac{2^{s(H)}}{D_{H}}\:
	{\mathop{\Pi}_{C \in \mathcal{C}(H)} \Re(\varphi(C ))}$$
	where the sum is over all dissection subgraphs $H$ of $\Phi$ with $k$ vertices, $o(H)$ denotes the number of odd cycles in $H$, $D_{H}=\Pi_{v \in V(H), d_{H}(v) \neq 0}d_{\Phi} (v)$, $d_{\Phi} (v)$ is the degree of the vertex $v$ in $\Phi$.
\end{theorem}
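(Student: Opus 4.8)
The plan is to combine the classical principal-minor description of the coefficients of a characteristic polynomial with the gain Harary formula of Theorem~\ref{theorem:charpoly-det-adj}, and then to package the resulting triple sum as a single sum over dissection subgraphs. Writing $\mathbf{N} = \mathbf{D}^{-\frac{1}{2}}\mathbf{A}(\Phi)\mathbf{D}^{-\frac{1}{2}}$, so that $\mathcal{L}(\Phi) = \mathbf{I} - \mathbf{N}$, I would first recall the standard fact that for $\Gamma(x,\mathcal{L}(\Phi)) = x^n + b_1 x^{n-1} + \cdots + b_n$ the coefficient $(-1)^k b_k$ equals the sum of all principal $k\times k$ minors of $\mathcal{L}(\Phi)$. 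For a vertex set $S$ with $|S| = k$, the identity $\det\big((\mathbf{I}-\mathbf{N})[S]\big) = \sum_{T \subseteq S}(-1)^{|T|}\det(\mathbf{N}[T])$ (expansion of a determinant into its principal minors) reduces each such minor to principal minors of $\mathbf{N}$, and since $\mathbf{N}[T] = \mathbf{D}^{-\frac{1}{2}}[T]\,\mathbf{A}(\Phi)[T]\,\mathbf{D}^{-\frac{1}{2}}[T]$ we have $\det(\mathbf{N}[T]) = \det(\mathbf{A}(\Phi)[T])/\prod_{v\in T}d_\Phi(v)$.

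Next I would apply Theorem~\ref{theorem:charpoly-det-adj} to the induced gain subgraph $\Phi[T]$, writing $\det(\mathbf{A}(\Phi)[T]) = \sum_{H'}(-1)^{r(H')}2^{s(H')}\prod_{C\in\mathcal{C}(H')}\Re(\varphi(C))$, where $H'$ ranges over the spanning elementary subgraphs of $\Phi[T]$. Substituting this into the double sum yields a triple sum indexed by $(S,T,H')$ with $|S|=k$, $T\subseteq S$, and $H'$ an elementary subgraph spanning $T$. The key reorganization is the bijection that sends such a triple to the dissection subgraph $H$ on vertex set $S$ whose edge and cycle components are exactly those of $H'$ and whose isolated vertices are $S\setminus T$; conversely, $H$ recovers $S = V(H)$, $T = \{v : d_H(v)\neq 0\}$ and $H' = H[T]$. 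Under this bijection the parameters carried by $H'$ transfer to $H$: adjoining $k-|T|$ isolated vertices raises both $|V|$ and the number of components by one while leaving the edge count fixed, so $r(H)=r(H')$ and $s(H)=s(H')$, the cycle collections coincide ($\mathcal{C}(H)=\mathcal{C}(H')$), and $\prod_{v\in T}d_\Phi(v)=D_H$.

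It then remains to reconcile the signs, which is the only delicate point. After the substitution the sign attached to $H$ is $(-1)^{|T|}(-1)^{r(H)}$, whereas the claimed formula carries $(-1)^{r(H)+o(H)}$; thus the whole argument comes down to the parity identity $|T|\equiv o(H)\pmod 2$. This I would establish by decomposing $H'$ into $q$ edge-components and cycles of lengths $\ell_1,\dots,\ell_t$, so that $|T| = 2q + \sum_j \ell_j \equiv \sum_j \ell_j \pmod 2$, and observing that $\sum_j \ell_j$ counts the odd-length cycles modulo $2$, i.e.\ $\sum_j\ell_j \equiv o(H)\pmod 2$. Combining the invariants with this parity identity turns the triple sum into $\sum_H (-1)^{r(H)+o(H)}\frac{2^{s(H)}}{D_H}\prod_{C\in\mathcal{C}(H)}\Re(\varphi(C))$, which is the desired expression. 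The main obstacle is precisely this sign bookkeeping: one must track how the inclusion--exclusion sign $(-1)^{|T|}$ from the minor expansion interacts with the Harary sign $(-1)^{r(H')}$ and repackages as the odd-cycle correction $(-1)^{o(H)}$, while simultaneously verifying that the adjoined isolated vertices perturb neither $r$, nor $s$, nor the cycle product.
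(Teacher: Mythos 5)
Your proposal is correct, but it takes a genuinely different route from the paper's own proof. The paper proves the theorem by a direct Leibniz expansion of each $k \times k$ principal minor of $\mathcal{L}(\Phi)$: the nonzero terms of $\sum_\sigma \sgn(\sigma)\,\mathcal{L}_{i_1\sigma(i_1)}\cdots\mathcal{L}_{i_k\sigma(i_k)}$ are identified with permutations whose fixed points, transpositions, and longer cycles correspond to the isolated vertices, edges, and cycles of a dissection subgraph $H$; the sign is computed as $\sgn(\sigma)=(-1)^{r(H)}$, the factor $(-1)^{o(H)}/D_H$ is extracted from the product of matrix entries, and each $H$ is counted with multiplicity $2^{c(H)}$ (two orientations per cycle), the two orientations combining to give $\varphi(C)+\overline{\varphi(C)}=2\Re(\varphi(C))$. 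You instead make the argument modular: you invoke the standard identity $\det\bigl((\mathbf{I}-\mathbf{N})[S]\bigr)=\sum_{T\subseteq S}(-1)^{|T|}\det(\mathbf{N}[T])$, reduce each $\det(\mathbf{A}(\Phi)[T])$ to the already-established gain Harary formula (Theorem \ref{theorem:charpoly-det-adj} applied to the induced subgraph $\Phi[T]$), and then repackage the resulting triple sum through the bijection $(S,T,H')\leftrightarrow H$ together with the parity identity $|T|\equiv o(H)\pmod{2}$. Your bookkeeping checks out: the invariance claims $r(H)=r(H')$, $s(H)=s(H')$, $\mathcal{C}(H)=\mathcal{C}(H')$, $D_H=\prod_{v\in T}d_\Phi(v)$ are all valid, and the parity computation $|T|=2q+\sum_j\ell_j\equiv o(H)\pmod{2}$ is exactly the right way to reconcile the inclusion--exclusion sign with the stated $(-1)^{r(H)+o(H)}$. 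The trade-off is that your route delegates all of the permutation-cycle and orientation bookkeeping (the ultimate source of the $2^{s(H)}$ and $\Re(\varphi(C))$ factors) to Theorem \ref{theorem:charpoly-det-adj}, which yields a shorter and less error-prone proof, whereas the paper's expansion is self-contained at this point in the text and shows directly how those factors arise from the entries of $\mathcal{L}(\Phi)$ itself, mirroring the Hermitian normalized Laplacian argument it cites.
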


\begin{proof}
	As stated earlier, $(-1)^{k}c_k = \sum_k M_k$ where $M_k$ is the $k \times k$ minor of $\mathcal{L}(\Phi)$. Let $B = \{i_1, i_2, \ldots, i_k\}$ be a subset of $ V(G)$ with $k$ vertices, and $H$ be the subgraph induced on $B$. As $M_k = \sum_\sigma \sgn(\sigma) \mathcal{L}_{i_1 \sigma(i_1)} \mathcal{L}_{i_2 \sigma(i_2)} \ldots \mathcal{L}_{i_k \sigma(i_k)}$, so we need to consider only the terms with  $ \mathcal{L}_{i_1 \sigma(i_1)} \mathcal{L}_{i_2 \sigma(i_2)} \ldots \mathcal{L}_{i_k \sigma(i_k)}$ is non-zero. It is clear that,  the term $\mathcal{L}_{i_j \sigma(i_j)} \neq 0$ if and only if $\sigma(i_j) = i_j$, or $\sigma(i_j) \neq i_j$ and the vertices $i_j$ and  $\sigma(i_j)$ are adjacent in $H$. Let $\sigma$ be a permutation corresponding to the non-zero term. Then $\sigma$ can be written as the product of disjoint cycles, say $$(i_1,i_2, \ldots, i_s) (i_{s+1},i_{s+2}, \ldots, i_{t}) \ldots (i_m) (i_{m+1}) \ldots (i_k).$$ Let $f(H)$ be the number of fixed vertices under $\sigma, o(H)$ is the number of odd cycles in $\sigma$ and $c_l$ be the number of cycles of length $l$. Then $f(H)+\sum_l l c_l = n$ and $n - f(H) \equiv o(H)(\mod2)$. As $\sgn(\sigma) = (-1)^{e(H)}$ where $e(H)$ is the number of even cycles in $\sigma$, we have $$r(H) = n - f(H) - o(H) - e(H) \equiv e(H)(\mod 2)$$ and hence $$\sgn(\sigma) = (-1)^{r(H)}.$$ A $2$-cycle in $\sigma$ corresponds to an edge of $H$. For  each cycle $(i_1 i_2 \ldots i_r) (r \geq 3)$ in $\sigma$, there exists a cycle $i_1 i_2 \ldots i_r i_1$ in $H$ corresponding to it.  So each non-zero term gives rise to a dissection graph. That is, $H$ is a dissection subgraph of $\Phi$. Let $B_1$ be the set of fixed vertices under $\sigma$. Let $B_2$ be the set of edges in $H$ corresponding to the $2$-cycles in the disjoint cycle factorization of $\sigma$. Let $C_1, C_2, \ldots, C_l$ be all cycles of $H$ corresponding to the cycles of length more than $2$ in the disjoint cycle factorization of $\sigma$. Note that any $2$-cycles $(i_s, i_t)$ in $\sigma$ corresponds to the nonzero factor $\mathcal{L}_{i_s i_t}\mathcal{L}_{i_t i_s} = \frac{1}{d_{i_s} d_{i_t}}$. Any $r$-cycle $(i_{1} i_{2} \ldots i_{r}) (r \geq 2)$ in $\sigma$ corresponds to the non-zero factor of $\mathcal{L}_{i_1 i_2} \mathcal{L}_{i_2 i_3}  \ldots \mathcal{L}_{i_r i_1} =\frac{(-1)^r \varphi(C)}{d_{i_1} d_{i_2} \ldots d_{i_r}},$ where $C$ is the cycle $i_1 i_2 \ldots i_r i_1
	$ in $H$ and $\varphi(C)$ is the gain of the cycle $C$. Thus
	$$\sgn(\sigma) \mathcal{L}_{i_1 \sigma(i_1)} \mathcal{L}_{i_2 \sigma(i_2)} \ldots \mathcal{L}_{i_k \sigma(i_k)}$$
	\begin{align*}
		&= (-1)^{r(H)} \mathop{\Pi}_{i_j \in B_1}\mathcal{L}_{i_j i_j} \mathop{\Pi}_{(i_j, \sigma(i_j)) \in B_2}(\mathcal{L}_{i_j \sigma(i_j)} \mathcal{L}_{\sigma(i_j) i_j}) \left(\mathop{\Pi}_{(i_j, \sigma(i_j)) \in E(C_l)} \mathcal{L}_{i \sigma(i)} \right) \cdots \left(\mathop{\Pi}_{(i_j, \sigma(i_j)) \in E(C_l)} \mathcal{L}_{i \sigma(i)} \right)\\ &= (-1)^{r(H)} \left(\mathop{\Pi}_{(i_j, \sigma(i_j)) \in B_2} \frac{1}{d_{i_j} d_{\sigma(i_j)}}\right) \cdot \left(\mathop{\Pi}_{i=1}^l \frac{(-1)^{g(C_i)} \varphi(C_i)}{D(C_i)}\right)\\ &= (-1)^{r(H)+o(H)}\frac{1}{D_{H}} \mathop{\Pi}_{i=1}^l \varphi(C_i),
	\end{align*} where $g(C_i)$ is the length of $C_i$, $D(C_i)=\Pi_{v \in V(C_i)}d_{\Phi} (v), D_{H}=\Pi_{v \in V(H), d_{H}(v) \neq 0}d_{\Phi}(v)$ and $d_{\Phi}(v)$ (resp. $d_{H}(v)$) is the degree of the vertex $v$ in $\Phi$ (resp. $H$).\\
	
	Each dissection graph $H$ with $k$ vertices gives rise to several permutations $\sigma$ for which the corresponding term in the
	minor expansion is non-zero. For a cycle in $H$, there are two ways to choose the corresponding cycle in $\sigma$. For example, the cycle $i_{j_1} i_{j_2} \ldots i_{j_r} i_{j_1}$  corresponds to $(i_{j_1} i_{j_2} \ldots i_{j_r})$  and $(i_{j_r} i_{j_{r-1}} \ldots i_{j_1})$. So the number of such $\sigma$ arising from a given $H$ is $2^{c(H)}$ where $c(H)$ is the number of cycles in $H$. Note that $s(H) = c(H)$ for dissection graph $H$. Moreover, if for one direction of a permutation, a cycle $C$ in $H$ has the value $\varphi(C)$, then for the other direction the cycle has the value $\overline{\varphi}(C)$. Thus each dissection graph $H$ contributes $(-1)^{r(H)+o(H)} \frac{1}{D_H} \mathop{\Pi}_{C \in \mathcal{C}(H)} [\varphi(C)) + \overline{\varphi(C)}]$. This completes the proof.
\end{proof}

\textbf{Acknowledgements.} The authors would like to thank the anonymous reviewer for the careful reading of the manuscript.  M. Rajesh Kannan would like to thank the Department of Science and Technology, India, for financial support through the projects MATRICS (MTR/2018/000986) and Early Career Research Award (ECR/2017/000643).

\bigskip
{\bf \large Contact Information}\\

\begin{tabular}{lcl}
M. Rajesh Kannan	&& Department of Mathematics, Indian Institute of Technology Kharagpur\\
rajeshkannan1.m@gmail.com  && Kharagpur 721 302, India\\
				&& \\
Navish Kumar	&&  Department of Humanities and Social Sciences, Indian Institute of Technology Kharagpur\\
navish.iitkgp@gmail.com  && Kharagpur 721 302, India\\
				&& \\				
Shivaramakrishna Pragada	&&  Department of Aerospace Engineering, Indian Institute of Technology Kharagpur\\
shivaramkratos@gmail.com  && Kharagpur 721 302, India\\
&& \\			
\end{tabular}

\end{document}